\newtheorem{theorem}{Theorem}
\newtheorem{definition}{Definition}
\newtheorem{lemma}{Lemma}
\newtheorem{corollary}{Corollary}
\numberwithin{equation}{section} 
\begin{document}
\title{The Cauchy problem for a combustion model in porous media}
\author{J.~C. da Mota\footnotemark[2] \footnotemark[4]
\and M.~M. Santos\footnotemark[3]
\and R.~A. Santos\footnotemark[2] \footnotemark[5]
}
\date{\vspace{-5ex}}
\maketitle

\newcommand{\R}{\mathbb{R}}

\newtheorem{remark}{Remark}

\renewcommand{\thefootnote}{\fnsymbol{footnote}}

\footnotetext[2]{Departamento de Matem\'atica, IME-UFG (Instituto de Matem\'atica e Estat\'\i stica--
Universidade Federal de Goi\'as). \  
Cx. Postal 131, Campus II, 74001-970 Goi\^ania, GO, Brazil.\\
jesus@ufg.br, rasantos@ufg.br}
\footnotetext[3]{Departamento de Matem\'atica, IMECC-UNICAMP (Instituto de Matem\'atica, Estat\'\i stica
e Computa\c c\~ao Cient\'\i fica--Universidade Estadual de Campinas). \ 
Rua S\'ergio Buarque de Holanda, 651, Cidade Universit\'aria Zeferino Vaz, 
13083-859 Campinas, SP, Brazil.\\
msantos@ime.unicamp.br}
\footnotetext[4]{J.C. da Mota thanks FAPEG--Funda\c c\~ao de Amparo \`a Pesquisa do Estado de 
Goi\'as, Brazil, for partial support of this work, under grant \# 05/2012-Universal.}
\footnotetext[5]{R.A. Santos thanks CAPES--Coordena\c c\~ao de Aperfei\c coamento de Pessoal de 
N\'\i vel Superior, Brazil, for financial support through a scholarship during his doctorate at
IMECC-UNICAMP, Brazil, under project \# 23038.002308/2010-48-AUXPE 747/2010.}

\renewcommand{\thefootnote}{\arabic{footnote}}

\pagestyle{myheadings}
\thispagestyle{plain}
\markboth{J.~C da Mota, M.~M. Santos and R.~A. Santos}{Combustion model}

\begin{abstract} %
We prove the existence of a global solution to the Cauchy problem for a nonlinear 
reaction-diffusion system coupled with a system of ordinary differential equations. 
The system models the propagation of a combustion front in a porous medium with two layers, as
derived by J. C. da Mota and S. Schecter in \textit{Combustion fronts in a porous medium with 
two layers}, Journal of Dynamics and Differential Equations, \textbf{18}(3) (2006). For the particular case, when the fuel concentrations in both layers are known functions, the Cauchy problem was solved by J. C. da Mota and  M. M. Santos in \textit{An application of the monotone iterative method to a combustion problem in porous media}, Nonlinear Analysis: Real World
Application, \textbf{12} (2010). For the full system, in which the fuel concentrations are also unknown functions, we construct an iterative scheme that contains a sequence which converges to a
solution of the system, locally in time, under the conditions that the initial data are H\"older
continuous, bounded and nonnegative functions. We also show the existence of a global solution, if the initial date are additionally in the Lebesgue space $L^p$, for some $p\in (1,\infty)$. 
Our proof of the local existence relies on a careful analysis on the construction of the
fundamental solution for parabolic equations obtained by the parametrix method. 
In particular, we show the continuous dependence of the fundamental solution for parabolic
equations with respect to the coefficients of the equations. To obtain the global existence, 
we employ the {\lq\lq}method of auxiliary functions{\rq\rq} as used by O. A. Oleinik and S. N.
Kruzhkov in \textit{Quasi-linear second-order parabolic equations with many independent variables}, Russian Mathematical Surveys, \textbf{16}(5) (1961). Furthermore, for a broad class of 
reaction-diffusion systems we show that the non negative quadrant is a positively invariant region, and, as a consequence, that classical solutions of similar systems, with the reactions functions being non decreasing in one unknown and semi-lipscthitz continuous in the other, are bounded by lower and upper solutions for any positive time if so they are at time zero.  
\end{abstract}

\pagestyle{myheadings}
\thispagestyle{plain}
\markboth{ }{ }

\section{Introduction}

We are mainly concerned with a specific system of the type
\begin{equation}
\label{eq1-general}
(u_i)_t-\alpha_i(y_i){(u_i)}_{xx}+\beta_i(y_i){(u_i)}_x=
f_i(y_i,u_1,u_2), \quad\ x\in\mathbb{R}, \quad t>0
\end{equation}
for the unknowns $u_i,\, y_i$, with $i=1,2$, where $y_i$ satisfies an ordinary diferential equation
which can be solved depending on $u_i$, and $\alpha_i(y_i),\beta_i(y_i)$ are given functions of 
$y_i$, and $f_i(y_i,u_1,u_2)$ is a function (also given) of $y_i$, $u_1$ and $u_2$. For fixed 
$y_i$, the equations \eqref{eq1-general} are a system of parabolic equations for $u_1,u_2$ coupled 
by the function $f_i$. For the full system, in the unknowns $u_1,u_2,y_1,y_2$, since $y_i$ can be
expressed depending on $u_i$, our system can be writen in the unknowns $u_1, u_2$ only, but with 
coefficients depending in a peculiar way on $u_1, u_2$. In fact, the system we shall consider can 
be written in the form
\begin{equation}
\label{eq1-general-2}
\begin{array}{l}  (u_i)_t - a(x,\int_0^tf(u_i)d\tau)\ {(u_i)}_{xx}
 + b(x,\int_0^tf(u_i)d\tau)\ {(u_i)}_x\\
\ \ \ \ \ \ \ \ \ \ \ \ \ \ \ \ \ \ \ \ \ \ \
= F_i(x,u_1,u_2,\int_0^tf(u_i)d\tau)\end{array}
\end{equation}
for given functions $a$, $b$, $f$, and $F_i$. 

Specifically, the functions $\alpha_i$, $\beta_i$ and 
$f_i$ in \eqref{eq1-general} are given by
\begin{equation}
\label{functions}
\begin{array}{c}
\alpha_i(y_i)=\dfrac{\lambda_i}{a_i+b_iy_i}, \quad
\beta_i(y_i)=\dfrac{c_i}{a_i+b_iy_i} \quad \mbox{ and}\\
f_i(y_i,u_1,u_2)=\dfrac{b_iA_iu_i+d_i}{a_i+b_iy_i}y_if(u_i)+(-1)^iq\dfrac{u_1-u_2}{a_i+b_iy_i}
\end{array}
\end{equation}
where $f(u_i)$ is the {\lq\lq}Arrhenius function{\rq\rq} 
\begin{equation}
\label{f}
f(u_i)=\mbox{e}^{-\frac{E}{u_i}}, 
\end{equation}
being $E$ is a positive constant,
\footnote{We notice that the function $f(s)=\mbox{e}^{-\frac{E}{s}}$, $s\not=0$, can be extended by zero
continuously from $s>0$ to $s=0$. 
In fact, $\lim_{s\to0+}\frac{d^k f}{ds^k}=0$ for any $k=0,1,2,\cdots$. Despite the discontinuity
when $s\to0-$ ($\lim_{s\to0-}f(s)=\infty$), this will not cause problems in our analysis because
essentially we will deal only with non negative functions $u_i$, $i=1,2$, cf. theorems \ref{local}
and \ref{global}.} and $\lambda_i,a_i,b_i,c_i,d_i,A_i$, $i=1,2$, and $q$ are positive constants.
  
The unknown $y_i$ satisfies the ordinary differential equation
\begin{equation}
\label{yi ode}
(y_i)_t=-A_iy_if(u_i).
\end{equation}
Joint with equations \eqref{eq1-general} we add the initial data
\begin{equation}
\label{in cond ui}
u_i\big|_{t=0}=u_{i,0}
\end{equation}
and
\begin{equation}
\label{in cond yi}
y_i\big|_{t=0}=y_{i,0},
\end{equation}
for given functions $u_{i,0}, y_{i,0}$.
Solving \eqref{yi ode} for $y_i$ we find 
\begin{equation}
\label{yi}
y_i=y_{i,0}(x)\mbox{e}^{-A_i \int_0^tf(u_i)d\tau}.
\end{equation} 
Substituting \eqref{yi} in \eqref{eq1-general} we obtain \eqref{eq1-general-2}, with
\begin{equation}
\label{a}
a(x,\int_0^tf(u_i)d\tau)=\alpha_i(y_{i,0}(x)\mbox{e}^{-A_i \int_0^tf(u_i))d\tau}), 
\end{equation}
\begin{equation}
\label{b}
b(x,\int_0^tf(u_i)d\tau)=\beta_i(y_{i,0}(x)\mbox{e}^{-A_i \int_0^tf(u_i)d\tau})
\end{equation}
and
\begin{equation}
\label{Fi}
F_i(x,u_1,u_2,\int_0^tf(u_i)d\tau)=f_i(y_{i,0}(x)\mbox{e}^{-A_i \int_0^tf(u_i)d\tau},u_1,u_2).
\end{equation}

\bigskip

The system formed by the equations \eqref{eq1-general} and \eqref{yi ode}, with the constitutive
functions \eqref{functions} and \eqref{f}, models the propagation of a combustion front in a porous
medium with two layers \cite{ms}. The unknowns $u_1$ and $y_1$ stands for the temperature and the
fuel concentration, respectively, in one layer, and $u_2$ and $y_2$ stands for the same in the
other layer, and the constants $\lambda_i, a_i$, etc. are parameters related to the medium. 
We refer to \cite{ms} for a detailed derivation of this model.

\medskip

In this paper we solve the Cauchy problem \eqref{eq1-general}, 
\eqref{functions}--\eqref{in cond yi} (or, equivalently, \eqref{eq1-general-2} joint with the initial conditions \eqref{in cond ui}, for
given functions $y_{i,0}$ and $a,b,F_i$ in \eqref{a}-\eqref{Fi}, being $\alpha_i, \beta_i, f_i$ and
$f$ given in \eqref{functions} and \eqref{f}). Furthermore, for a broad class of 
reaction-diffusion systems (see \eqref{general rd s} and \eqref{general rd s wo delta}) we show that the non negative quadrant is a positively invariant region, and, as a consequence, that classical solutions of similar systems, with the reactions functions being non decreasing in one unknown and semi-lipscthitz continuous in the other 
(see \eqref{semi lip}), are bounded by lower and upper solutions for any positive time if so they are at time zero.

Setting some notations, we say that a function is of class $C^{2,1}$ in a set 
$S\subset\R^d\times [0,\infty)$ if it has continuous derivatives up to second order with respect to 
$x$ and up to first order with respect to $t$ for all $(x,t)\in S$, and denote this class by 
$C^{2,1}(S)$ (or simply by $C^{2,1}$), and of class $C^{\alpha,\frac{\alpha}{2}}$ in $S$, for some 
$\alpha\in (0,1]$, if it is bounded and H\"older continuous in $S$ with exponent $\alpha$ with
respect to $x$ (Lipschitz continuous if $\alpha=1$) and with exponent $\frac{\alpha}{2}$ with
respect to $t$, and denote this class by $C^{\alpha,\frac{\alpha}{2}}(S)$ (or simply by 
$C^{\alpha,\frac{\alpha}{2}}$), i.e. a function $u(x,t)$ is said to be in 
$C^{\alpha,\frac{\alpha}{2}}(S)$, for some $\alpha\in (0,1]$, if there is a constant $C>0$ such
that $|u(x,t)|\le C$ for all $(x,t)\in S$ and 
$|u(x_1,t_1)-u(x_2,t_2)|\le C(|x_1-x_2|^\alpha+|t_1-t_2|^{\frac{\alpha}{2}})$ for all 
$(x_1,t_1),(x_2,t_2)\in S$. The space $C^{\alpha,\frac{\alpha}{2}}(S)$ is endowed with the norm
$\|u\|_{\alpha,\frac{\alpha}{2}}\equiv\|u\|_{C^{\alpha,\frac{\alpha}{2}}(S)}:=
\sup_{(x,t)\in S}|u(x,t)| + \sup_{\{(x,t)\not=(y,s), \, (x,t),(y,s)\in S\}}\frac{|u(x,t)-u(y,s)|}{|x-y|^\alpha+|t-s|^\frac{\alpha}{2}}$. \ For the space of lipschitzian bounded functions $u$, defined
in a set $S$ in $\R^d$ or $\R^d\times [0,\infty)$, we use the norm\break
$\|u\|_1:=\sup_S|u|+\sup_{\{x\not=y, \, x,y\in S\}}\frac{|u(x)-u(y)|}{|x-y|}$. 

Throughout the paper $i,j=1,2$ with $j\not=i$.

We denote by $\varphi$ the {\lq\lq}upper solution{\rq\rq} 
$\varphi(t)=(M+\beta)\mbox{e}^{\alpha t}-\beta$ \label{upper} for the Cauchy
problem \eqref{eq1-general}, \eqref{in cond ui} with given $y_i$, satisfying 
$0\le y_i\le \|y_{i,0}\|_\infty$, where\footnote{If $g$ is a bounded function defined in $\mathbb{R}$, $\|g\|_\infty:=\sup_{x\in\mathbb{R}}|g(x)|$.}\break\hfill $M=\max_{i=1,2}\|u_{i,0}\|_\infty$, 
$\alpha=\max_{i=1,2}\{\frac{A_ib_i\|y_{i,0}\|_\infty}{a_i}\}$ and 
$\beta=\max_{i=1,2}\{\frac{d_i}{A_ib_i}\}$, and,
for $0<T\le\infty$, we denote by $\langle 0,\varphi\rangle_T$ the sector 
(set) of vector functions $u=(u_1,u_2) : \mathbb{R}\times [0,T)\to \mathbb{R}^2$ such that 
$0\le u_i(x,t)\le \varphi(t)$ (for $i=1,2$ and) for all 
$(x,t)\in\mathbb{R}\times [0,T).$\footnote{If $T<\infty$ and the function 
$u_i$ is defined and continuous in $\mathbb{R}\times [0,T]$, obviously we can extend the inequality $0\le u_i(x,t)\le \varphi(t)$ to $t=T$.}  
It is easy to check that the pair of (vector) functions $\hat{u}:=(0,0)$ and 
$\tilde{u}:=(\varphi,\varphi)$ is an ordered pair (ordered in the sense that $\hat{u}_i\le\tilde{u}_i$) of lower and upper solutions to the system \eqref{eq1-general} \cite[Lemma 2]{js}. 
See Section \ref{local proof}, p. \pageref{pair}, for details.

Our main results assuring the existence of a local and a global solution to the Cauchy problem 
\eqref{eq1-general}, \eqref{functions}--\eqref{in cond yi} (or, equivalently, \eqref{eq1-general-2}--\eqref{in cond ui}, \eqref{a}--\eqref{Fi}) are the following theorems:

\smallskip

\begin{theorem}\rm{(Local solution)}.
\label{local}
Let $u_{i,0}$ and $y_{i,0}$ be nonnegative, lipschitz continuous and bounded functions in $\R$. Then there is a positive number $T$ such that the Cauchy problem 
\eqref{eq1-general-2}--\eqref{in cond ui}, \eqref{a}--\eqref{Fi} has a solution $u=(u_1,u_2)$ in the class $C^{2,1}(\R\times(0,T])\cap C^{1,\frac{1}{2}}(\R\times[0,T])$ satisfying 
$0\le u_i(x,t)\le \varphi(t)$ for all $(x,t)\in\mathbb{R}\times [0,T]$.
Besides, if additionally $u_{i,0}\in L^p(\R)$ for some $p\in (1,\infty)$ then 
$u\in L^\infty( (0,T) ; L^p(\R) )$, with a possible smaller $T$.
\end{theorem}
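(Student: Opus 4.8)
The plan is to solve \eqref{eq1-general-2}--\eqref{in cond ui} by a contraction fixed‑point argument in which, at each step, the problem is reduced to a \emph{linear}, decoupled pair of uniformly parabolic equations with coefficients and source frozen at the previous iterate and solved through the fundamental solution built by the parametrix method. Fix $\alpha\in(0,1)$; the Lipschitz data $u_{i,0},y_{i,0}$ then lie in $C^{\alpha}(\R)$. For $R>0$ and $T>0$ to be chosen, let
\[
K_T:=\{v=(v_1,v_2)\in C^{\alpha,\frac\alpha2}(\R\times[0,T])^2:\ 0\le v_i\le\varphi,\ \ \|v_i\|_{\alpha,\frac\alpha2}\le R\},
\]
and for $v\in K_T$ put $I_i^v(x,t):=\int_0^t f(v_i(x,\tau))\,d\tau$, $a_i^v(x,t):=a(x,I_i^v(x,t))$, $b_i^v(x,t):=b(x,I_i^v(x,t))$. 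Define $u_i=\Phi_i(v)$ to be the bounded solution of
\[
(u_i)_t-a_i^v(u_i)_{xx}+b_i^v(u_i)_x+Ku_i=G_i^v,\qquad u_i\big|_{t=0}=u_{i,0},
\]
where $G_i^v:=F_i(x,v_1,v_2,I_i^v)+Kv_i$ and $K>0$ is fixed with $K\ge q/\min(a_1,a_2)$, so that $G_i^v\ge0$ on $\R\times[0,T]$ whenever $0\le v_j\le\varphi$ (the only negative contribution to $F_i$ is $(-1)^iq(u_1-u_2)/(a_i+b_iy_i)\ge-qu_i/a_i$, whence $G_i^v\ge v_i(K-q/a_i)\ge0$). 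Since $y_i=y_{i,0}(x)\mathrm{e}^{-A_iI_i^v}$ takes values in $[0,\|y_{i,0}\|_\infty]$, the operators are uniformly parabolic with $a_i^v$ between two positive data constants, and $a_i^v,b_i^v\in C^{\alpha,\frac\alpha2}$, being compositions of the smooth, locally Lipschitz functions $\alpha_i,\beta_i,f$ with the Lipschitz datum $y_{i,0}$ and the H\"older function $I_i^v$.

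The solution is represented via the parametrix fundamental solution $\Gamma_i^v$ of $\partial_t-a_i^v\partial_{xx}+b_i^v\partial_x+K$ by
\[
\Phi_i(v)(x,t)=\int_\R\Gamma_i^v(x,t;\xi,0)u_{i,0}(\xi)\,d\xi+\int_0^t\!\!\int_\R\Gamma_i^v(x,t;\xi,\tau)G_i^v(\xi,\tau)\,d\xi\,d\tau,
\]
and then three points must be checked. (i) \emph{$\Phi$ maps $K_T$ into itself.} The maximum principle applied to the bounded functions $\varphi-\Phi_i(v)$ and $\Phi_i(v)$ --- using $G_i^v\ge0$ and that $\varphi'(t)\ge F_i(x,v_1,v_2,I_i^v)$ for $0\le v_j\le\varphi$, which is exactly the upper‑solution property \cite[Lemma 2]{js} --- gives $0\le\Phi_i(v)\le\varphi$; while the Schauder‑type bound for the linear problem gives $\|\Phi_i(v)\|_{\alpha,\frac\alpha2}\le C\,(\|u_{i,0}\|_1+\|G_i^v\|_\infty)$ with $C$ depending only on the ellipticity constants and on $\|a_i^v\|_{\alpha,\frac\alpha2}+\|b_i^v\|_{\alpha,\frac\alpha2}$. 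Because $0\le I_i^v\le T$ and $I_i^v$ is Lipschitz in $t$ with $x$‑H\"older seminorm $O(T)$, the $v$‑dependent part of $\|a_i^v\|_{\alpha,\frac\alpha2}+\|b_i^v\|_{\alpha,\frac\alpha2}$ is $O(T)$ on $K_T$, while $\|G_i^v\|_\infty$ is bounded by a data constant (by $0\le v_j\le\varphi$); hence one first fixes $R$ from the $T\to0$ value of this Schauder bound and then takes $T$ small, depending on $R$ and the data, so that $\Phi(K_T)\subseteq K_T$. (ii) \emph{$\Phi$ is a contraction on $K_T$ for $T$ small.} With $u=\Phi(v)$, $\bar u=\Phi(\bar v)$, split $u_i-\bar u_i$ into the contribution of $\Gamma_i^v-\Gamma_i^{\bar v}$ (applied to $u_{i,0}$ and to $G_i^v$) plus $\int_0^t\!\int\Gamma_i^{\bar v}(G_i^v-G_i^{\bar v})$; bound the first by the \emph{continuous dependence of the fundamental solution on its coefficients} --- which controls the associated potentials in $C^{\alpha,\frac\alpha2}$ by a multiple of $\|a_i^v-a_i^{\bar v}\|_{\alpha,\frac\alpha2}+\|b_i^v-b_i^{\bar v}\|_{\alpha,\frac\alpha2}$ --- and the second directly via the uniform $L^1$‑in‑$\xi$ bound of $\Gamma_i^{\bar v}$. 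Since $a,b,f,F_i$ are Lipschitz on the relevant bounded ranges and $I_i^v-I_i^{\bar v}=\int_0^t(f(v_i)-f(\bar v_i))\,d\tau$ carries a factor $T$, one gets $\|a_i^v-a_i^{\bar v}\|_{\alpha,\frac\alpha2}+\|b_i^v-b_i^{\bar v}\|_{\alpha,\frac\alpha2}+\sup|G_i^v-G_i^{\bar v}|\le T^{\kappa}C(R)\,\|v-\bar v\|_{\alpha,\frac\alpha2}$ for some $\kappa>0$, hence $\|\Phi(v)-\Phi(\bar v)\|_{\alpha,\frac\alpha2}\le\theta(T)\,\|v-\bar v\|_{\alpha,\frac\alpha2}$ with $\theta(T)\to0$ as $T\to0$. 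Choosing $T$ with $\theta(T)<1$, Banach's theorem gives a fixed point $u\in K_T$, which solves \eqref{eq1-general-2}--\eqref{in cond ui}. (iii) \emph{Regularity.} For this fixed point $a_i^u,b_i^u,G_i^u\in C^{\alpha,\frac\alpha2}$, so interior parabolic Schauder theory gives $u\in C^{2,1}(\R\times(0,T])$, and the representation formula with $u_{i,0}$ Lipschitz gives $u\in C^{1,\frac12}(\R\times[0,T])$ up to $t=0$.

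For the last assertion, assume in addition $u_{i,0}\in L^p(\R)$. Young's inequality together with the uniform $L^1$‑in‑$\xi$ bound of $\Gamma_i^u$, applied to the representation formula, gives for a.e. $t$
\[
\|u_i(\cdot,t)\|_{L^p}\le C\|u_{i,0}\|_{L^p}+C\int_0^t\|G_i^u(\cdot,\tau)\|_{L^p}\,d\tau,
\]
and, since $f(s)=\mathrm{e}^{-E/s}\le C_E\,s$ on $[0,\|\varphi\|_\infty]$ while $0\le u_i\le\varphi$ and $0\le y_i\le\|y_{i,0}\|_\infty$, one checks $\|G_i^u(\cdot,\tau)\|_{L^p}\le C(\|u_1(\cdot,\tau)\|_{L^p}+\|u_2(\cdot,\tau)\|_{L^p})$ --- crucially without needing $y_{i,0}\in L^p$. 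Summing over $i$ and invoking Gronwall's inequality yields $u\in L^\infty((0,T);L^p(\R))$, after possibly shrinking $T$ so that all of the above hold simultaneously.

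The main obstacle is step (ii): one needs the continuous dependence of the parametrix fundamental solution on the H\"older coefficients in the strong, quantitative form used above --- controlling the iteration difference in $C^{\alpha,\frac\alpha2}$, not merely in $C^{0}$ --- which amounts to showing, term by term in the successive‑approximation series defining $\Gamma$, that $\Gamma_i^v-\Gamma_i^{\bar v}$ and its $x$‑derivatives inherit Gaussian‑type bounds with prefactor a multiple of $\|a_i^v-a_i^{\bar v}\|_{\alpha,\frac\alpha2}+\|b_i^v-b_i^{\bar v}\|_{\alpha,\frac\alpha2}$. This is the ``careful analysis'' of the fundamental solution announced in the introduction; everything else is routine linear parabolic theory and comparison arguments.
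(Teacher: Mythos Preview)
Your outline differs from the paper's in two essential ways, and the first conceals a real gap.

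\medskip
\textbf{Contraction versus compactness.} The paper does \emph{not} run a Banach fixed-point argument. It only shows (Lemma~\ref{lem2.1}) that the iteration map $\mathcal A$ sends a ball $\Sigma\subset C^{1,1/2}$ into itself, extracts a convergent subsequence by Arzel\`a--Ascoli, and passes to the limit in the integral representation using merely \emph{pointwise} convergence $\Gamma_{[v_n]}\to\Gamma_{[v]}$ (Lemma~\ref{lem1.5}). The reason is precisely your step~(ii): the continuous-dependence estimate the paper actually proves (Lemma~\ref{lem1.4} and Theorem~\ref{teo1.1}) is of the form
\[
\|u-\bar u\|_{1,\frac12}\ \le\ K\bigl(\|v-\bar v\|_{1,\frac12}+\|v-\bar v\|_{1,\frac12}^{\,\beta}+\cdots\bigr),\qquad 0<\beta<1,
\]
where $v,\bar v$ are the coefficient vectors. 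The H\"older power $\beta$ enters through the interpolation device $J=J^\beta J^{1-\beta}$ in \eqref{eq1.3} and then into the $\partial_{xx}\Gamma$, $\partial_t\Gamma$ bounds \eqref{eq1.6}--\eqref{eq1.7}, hence into the time-H\"older part of the solution estimate. With coefficients $a_i^v,b_i^v$ depending on the iterate through $I_i^v=\int_0^t f(v_i)$, you correctly get $\|a_i^v-a_i^{\bar v}\|_{\alpha,\alpha/2}\le C\,T\,\|v-\bar v\|_{\alpha,\alpha/2}$, but plugging this in yields a term $T^{\beta}C\,\|v-\bar v\|_{\alpha,\alpha/2}^{\,\beta}$, and $\|v-\bar v\|^{\beta}/\|v-\bar v\|\to\infty$ as $\|v-\bar v\|\to0$: no choice of small $T$ makes this a contraction. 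To make your scheme work you would have to \emph{upgrade} the paper's dependence estimates to genuinely Lipschitz ones (i.e.\ remove the $\beta$-interpolation in Lemma~\ref{lem1.3} by a direct term-by-term differentiation of the parametrix series), which is plausible but is not what the paper's ``careful analysis'' supplies. As written, step~(ii) appeals to a Lipschitz bound that the paper does not establish.

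A smaller slip in the same step: the claim $\sup|G_i^v-G_i^{\bar v}|\le T^{\kappa}C(R)\|v-\bar v\|$ is false, since $G_i^v=F_i(x,v_1,v_2,I_i^v)+Kv_i$ depends on $v_1,v_2$ directly (not only through $I_i^v$), so the source difference is $O(\|v-\bar v\|)$ with no $T$-factor. The missing factor is recovered only after integrating against the volume potential; state it there, not for $G$.

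\medskip
\textbf{Sector bounds.} Your device of shifting the equation by $+Ku_i$ to force $G_i^v\ge0$ and then invoking the scalar maximum principle at each iteration is correct and neat (with $K\ge q/\min_i a_i$ one checks both $\Phi_i(v)\ge0$ and $\Phi_i(v)\le\varphi$). The paper proceeds differently: it first produces the solution and only afterwards shows $0\le u_i\le\varphi$ by its invariant-region/comparison machinery (Theorems~\ref{invariant}--\ref{comparison} and Corollary~\ref{comparison cor}), together with a continuous-dependence-in-$\delta$ argument for the perturbed system \eqref{Cauchy w delta}. Your route is more direct here; the paper's route is what allows them to avoid needing the contraction.

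\medskip
\textbf{$L^p$ part.} Your Gronwall argument on the solution, using $f(s)\le C_E\,s$ on $[0,\varphi(T)]$ so that $\|G_i^u(\cdot,\tau)\|_{L^p}\le C(\|u_1(\cdot,\tau)\|_{L^p}+\|u_2(\cdot,\tau)\|_{L^p})$, is essentially the paper's computation transplanted from the approximating sequence to the solution; it is fine.
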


\smallskip

\begin{theorem}\rm{(Global solution)}.
\label{global}
Assume that the hypotheses of Theorem \ref{local} are in force, including $u_{i,0}\in L^p(\R)$ for
some $p\in(1,\infty)$, and, in addition, that $y_{i,0}\in C^2(\R)$ and $(y_{i,0})^\prime$ is
bounded. Then the Cauchy problem \eqref{eq1-general-2}--\eqref{in cond ui}, 
\eqref{a}--\eqref{Fi} has a solution $u=(u_1,u_2)$ in \ $C^{2,1}( \R\times(0,\infty) )
\cap  C_{\mbox{\tiny{loc}}}^{1,\frac{1}{2}}(\R\times[0,\infty))\cap L_{\!\!\!\mbox{ \tiny{loc} }}^\infty( (0,\infty) ; L^p(\R))$\footnote{Here the term {\lq\lq}{$loc$}{\rq\rq} stands for {\lq\lq}locally{\rq\rq} in time, i.e. a function $u\in C_{\mbox{\tiny{loc}}}^{1,\frac{1}{2}}(\R\times[0,\infty))\cap L_{\!\!\!\mbox{ \tiny{loc} }}^\infty( (0,\infty) ; L^p(\R) )$ if $u\big|\mathbb{R}\times [0,T]\in C^{1,\frac{1}{2}}(\R\times[0,T])\cap L^\infty((0,T) ; L^p(\R) )$, for any $T>0$.}   satisfying $0\le u_i(x,t)\le \varphi(t)$, for all $(x,t)\in\mathbb{R}\times [0,\infty)$.
\end{theorem}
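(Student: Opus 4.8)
\noindent\textit{Plan of proof of Theorem \ref{global}.}
The strategy is the classical continuation argument: we take the local solution furnished by Theorem \ref{local} and extend it indefinitely by producing a priori estimates that stay finite on every bounded time interval. Let $[0,T_{\max})$ be the maximal interval on which a solution $u=(u_1,u_2)$ of \eqref{eq1-general-2}--\eqref{in cond ui}, \eqref{a}--\eqref{Fi} exists with the regularity and the bounds $0\le u_i\le\varphi(t)$ asserted in Theorem \ref{local}, and suppose for contradiction that $T_{\max}<\infty$. I would establish three bounds, uniform for $t\in[0,T_{\max}]$: (i) the $L^\infty$ bound $0\le u_i(x,t)\le\varphi(t)\le\varphi(T_{\max})$, which is the invariant-region/comparison bound already built into Theorem \ref{local} (applied on each $[0,T]$, $T<T_{\max}$, and passed to $t=T_{\max}$ by continuity); (ii) the Lebesgue bound $\sup_{0<t<T_{\max}}\|u_i(\cdot,t)\|_{L^p(\R)}<\infty$, again from Theorem \ref{local}, whose constant depends only on the data and $T$ and hence stays finite as $T\uparrow T_{\max}$; and (iii) the gradient bound $\sup_{0\le t\le T_{\max}}\|(u_i)_x(\cdot,t)\|_{L^\infty(\R)}<\infty$, which is the genuinely new ingredient and is obtained by the Oleinik--Kruzhkov method of auxiliary functions. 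I would first record that on $[0,T_{\max}]$ the problem is uniformly parabolic with bounded coefficients: since $u_i\ge0$ we have $0\le f(u_i)=\mathrm{e}^{-E/u_i}\le1$, so $0\le\int_0^t f(u_i)\,d\tau\le t$ and the argument $y_{i,0}(x)\mathrm{e}^{-A_i\int_0^t f(u_i)\,d\tau}$ of $\alpha_i,\beta_i$ stays in $[0,\|y_{i,0}\|_\infty]$; thus $a=\lambda_i/(a_i+b_iy_i)\in[\lambda_i/(a_i+b_i\|y_{i,0}\|_\infty),\ \lambda_i/a_i]$, and likewise $b$ and $F_i$ are bounded on the region $0\le u_i\le\varphi(t)$.

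For the gradient estimate I would differentiate \eqref{eq1-general-2} in $x$ and set $v_i:=(u_i)_x$; this yields, for each $i$, a linear uniformly parabolic equation for $v_i$ whose first- and zeroth-order coefficients involve $\partial_x a$, $\partial_x b$ and whose right-hand side involves $\partial_x F_i$. By \eqref{a}--\eqref{Fi} these are controlled by $\|(y_{i,0})'\|_\infty$ (hence the hypothesis that $(y_{i,0})'$ is bounded and $y_{i,0}\in C^2$) together with terms of the form $\int_0^t f'(u_i)\,v_i\,d\tau$ and terms linear in $v_1,v_2$; note that $f'(s)=(E/s^2)\mathrm{e}^{-E/s}$ also extends continuously by zero to $s=0$, so $f'$ is bounded on the relevant range of $u_i$. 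Following Oleinik--Kruzhkov, I would introduce an auxiliary function of the form $W:=\sum_i \zeta_i(u_i)\,v_i^2$ (or, working one index at a time, $W_i:=v_i^2\mathrm{e}^{-\lambda t}$ with $\lambda$ large, the $v_j$ terms going to the right-hand side), compute that $W$ satisfies a parabolic differential inequality of the shape $W_t-\sum a^{(i)}W_{xx}+(\cdots)W_x\le C\bigl(1+\int_0^t\sup_\R(v_1^2+v_2^2)\,d\tau\bigr)$ on $\R\times[0,T_{\max}]$, and apply the maximum principle. The maximum principle is legitimate on the whole line because the (locally uniform) Lipschitz bound together with $u_i(\cdot,t)\in L^p(\R)$ forces $u_i(x,t)\to0$ as $|x|\to\infty$, and then an interior Schauder estimate gives $v_i(x,t)\to0$ as $|x|\to\infty$, so the supremum of $W$ over $\R\times[0,T_{\max}]$ is attained either at an interior point or at $t=0$. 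Taking the supremum in $x$ and closing with Gronwall's inequality in $t$ then yields (iii), with a bound depending only on $T_{\max}$ and the data.

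Given (i)--(iii), the uniform Lipschitz and $L^p$ bounds on $\{u_i(\cdot,t)\}_{t<T_{\max}}$ yield, by Arzel\`a--Ascoli on compact $x$-intervals and a diagonal argument, a limit $u_i(\cdot,T_{\max})$ that is nonnegative, bounded, Lipschitz on $\R$ and lies in $L^p(\R)$; in particular it is admissible initial data. Applying Theorem \ref{local} with initial time $T_{\max}$ produces a solution on $[T_{\max},T_{\max}+\delta]$ for some $\delta>0$, and by the mild/integral formulation underlying the construction in Theorem \ref{local} (which, via the semigroup property of the parametrix fundamental solution, decomposes the integral equation on $[0,T_{\max}+\delta]$ into the piece on $[0,T_{\max}]$ and the piece on $[T_{\max},T_{\max}+\delta]$ with intermediate data $u_i(\cdot,T_{\max})$) the two pieces glue to a solution on $[0,T_{\max}+\delta]$, contradicting the maximality of $T_{\max}$. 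Hence $T_{\max}=\infty$. The stated regularity then follows at once: interior Schauder estimates applied to the now $x$-H\"older coefficients on each strip $\R\times[\varepsilon,T]$ give $u\in C^{2,1}(\R\times(0,\infty))$, while $u\in C^{1,\frac12}_{\mathrm{loc}}(\R\times[0,\infty))\cap L^\infty_{\mathrm{loc}}((0,\infty);L^p(\R))$ is just Theorem \ref{local} applied on each $[0,T]$.

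The step I expect to be the main obstacle is (iii). Because the coefficients $a,b$ and the reaction $F_i$ depend on $u_i$ through the time integral $\int_0^t f(u_i)\,d\tau$, differentiating in $x$ feeds the unknown gradient back into the coefficients under a time integral, so the differential inequality for the auxiliary function is not purely local: it carries a memory term $\int_0^t\sup_\R(v_1^2+v_2^2)\,d\tau$. One must therefore first freeze this sup-in-$x$, apply the maximum principle to obtain a pointwise-in-$t$ bound in terms of the memory term, and only afterwards close the estimate by Gronwall; and one must take care that the maximum principle is applied legitimately on the unbounded spatial domain, which is precisely where the $L^p$ hypothesis (decay at infinity) and the $C^2$/bounded-derivative hypotheses on $y_{i,0}$ (boundedness of $\partial_x a,\partial_x b$) are used.
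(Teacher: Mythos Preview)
Your overall architecture matches the paper's: set up a maximal interval $[0,T^*)$, assume $T^*<\infty$, obtain uniform $L^\infty$, $L^p$ and Lipschitz bounds up to $T^*$, extend $u_i(\cdot,T^*)$, and relaunch Theorem \ref{local}. The identification of the gradient bound (iii) as the crux, and the use of the $L^p$ hypothesis to force decay of $u_i$ and $(u_i)_x$ at spatial infinity (so that the maximum principle is legitimate on $\R$), are both exactly what the paper does.

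The gap is in the execution of (iii). You differentiate in $x$, set $v_i=(u_i)_x$, and propose an auxiliary function $W=\zeta_i(u_i)v_i^2$ (or $v_i^2e^{-\lambda t}$), claiming an inequality of the form
\[
W_t-aW_{xx}+(\cdots)W_x\le C\Bigl(1+\int_0^t\sup_{\R}(v_1^2+v_2^2)\,d\tau\Bigr),
\]
to be closed by Gronwall. This inequality does not hold. In the differentiated equation the zeroth-order coefficient is $b_x=\beta_i'(y_i)(y_i)_x$, and $(y_i)_x$ itself contains $\int_0^t f'(u_i)\,v_i\,ds$; hence $|b_x|\le C\bigl(1+\int_0^t|v_i|\bigr)$. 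At a maximum of $W$ the term $-2\zeta b_x v_i^2$ (or, equivalently, $-2b_xW$) is therefore of order $W\cdot\int_0^t|v_i|$, i.e.\ \emph{super-linear} in the unknown: writing $P(t)=\sup_\R(v_1^2+v_2^2)$ one gets a contribution $\sim P\int_0^tP^{1/2}$, not $\int_0^t P$. No choice of $\lambda$ in $e^{-\lambda t}$ absorbs this, and Gronwall does not close. (The same obstruction arises from $a_x$ in the first-order coefficient.)

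The paper's remedy is precisely the point of the Oleinik--Kruzhkov substitution that you cite by name but do not actually implement: one sets $u_i=h_i(v_i)$ with $h_i(v)=K^*\bigl(-2+3e\int_0^v e^{-s^{m_i}}ds\bigr)$, derives the equation for $v_i$, and differentiates. At a maximum of $|(v_i)_x|=p_i$ the term $-\alpha_i\,(h_i''/h_i')'\,p_i^3$ appears with coefficient $\sim m_i(m_i-1)$, while the dangerous coupling $(y_i)_x\,(h_i''/h_i')\,p_i^2$ contributes a cubic term of order only $m_i\,p_i^3$ (since $|(y_i)_x|\le K_1(1+p_i)$). Choosing $m_i$ large makes the good cubic dominate, and one lands on a system $c_ip_i^3-d_ip_i^2-e_ip_i-1\le p_j$ which pins $(p_1,p_2)$ to a bounded region---no Gronwall is involved at all. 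So the ``memory'' does not go into a time integral to be closed later; it is absorbed algebraically at the maximum point by the extra cubic power that the substitution manufactures.

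A minor point on (ii): Theorem \ref{local} only delivers the $L^p$ bound for a short time depending on the data; it does not directly give a bound on $[0,T^*)$. The paper re-derives $\sup_{t<T^*}\|u_i(\cdot,t)\|_{L^p}<\infty$ from the integral representation and a Gronwall argument on $\phi(t)=\|u_1(\cdot,t)\|_{L^p}+\|u_2(\cdot,t)\|_{L^p}$. This is routine, but your sentence ``stays finite as $T\uparrow T_{\max}$'' hides it.
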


\bigskip

Furthermore, considering general parabolic operators
\begin{equation}
\label{general rd op}
\mathcal{L}_i= 
\partial_t-\sum_{k,l=1}^da_{i,kl}(x,t)\partial_{x_kx_l}+\sum_{k=1}^db_{i,k}(x,t)\partial_{x_k}, 
\end{equation}
where $x=(x_1,\cdots,x_d)\in\R^d, \, 0<t<T\le\infty$, $d\ge1$, and the operator 
$\mathcal{L}_i$ is uniformly parabolic, i.e. for some constant $\lambda>0$, 
$\sum_{k,l=1}^da_{i,kl}(x,t)\xi_k\xi_l\ge\lambda |\xi|^2$ for all $\xi=(\xi_1,\cdots,\xi_d)\in\R^d$ and all $(x,t)\in\Omega_T:=\R^d\times (0,T)$, using the arguments on invariant regions given in 
\cite{ccs,smoller}
, which basics is the proof of the maximum principle for the heat equation, we state and prove Theorem \ref{invariant} below, and as a consequence, Theorem \ref{comparison}. 
In these theorems we take (vector) functions $u=(u_1,u_2)$ in the class 
$C^{1,2}(\Omega_T)\cap C(\R^d\times [0,T))$\footnote{$C(\R^d\times [0,T))$ denotes the space of continuous vector functions in $\R^d\times [0,T)$.} satisfying the condition 
\begin{equation}
\label{growth}
\liminf_{|x|\to\infty,\, t\to0+}u_i(x,t)\ge0
\end{equation}
(cf. {\em condition K} in \cite{smoller}). 
 
\smallskip

\begin{theorem}
\label{invariant}
Let $\delta$ be a positive number and $c_i(x,t)$ a bounded function in $\Omega_T$.
If $f_i(x,t,u_1,u_2)$ 
is a function such that, for some positive number $\varepsilon_0$, it satisfies $f_i\ge0$ when $-\varepsilon_0<u_i<0$ and $u_j>-\varepsilon_0$, for each $(x,t)\in\Omega_T$ (where $j\not=i$, $i,j=1,2$) then the quadrant 
$Q=\{(u_1,u_2)\,;\, u_1\ge0, u_2\ge0\}$ is a positively invariant region to the system 
\begin{equation}
\label{general rd s}
\mathcal{L}_i(u_i)+c_iu_i=f_i(x,t,u_1,u_2) + \delta 
\end{equation}
for any classical solution $u=(u_1,u_2)$ satisfying the condition \eqref{growth}.
More precisely, under the above hypotheses, if 
$u=(u_1,u_2)\in C^{1,2}(\Omega_T)\cap C(\R^d\times [0,T))$ satisfies \eqref{growth} 
and the inequality
$((\mathcal{L}_i+c_i)u_i)(x,t)\ge f_i(x,t,u_1(x,t),u_2(x,t))+\delta$, for all 
$(x,t)\in\Omega_T$, and $u(x,0)\in Q$ for all $x\in\R^d$, then, $u(x,t)\in Q$ for all 
$(x,t)\in\R^d\times [0,T)$.
\end{theorem}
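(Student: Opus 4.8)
The plan is to argue by contradiction, running the parabolic maximum‑principle argument for invariant regions used in \cite{ccs,smoller}. Suppose the conclusion fails; after interchanging the indices if needed, $u_1(x_0,t_0)<0$ for some interior $(x_0,t_0)\in\Omega_T$. The one genuinely delicate point — and the reason the statement carries the parameter $\delta>0$ — is that on the unbounded strip $\Omega_T$ the natural ``first escape'' minimum of the solution from the quadrant could a priori leak to $|x|=\infty$ or to $t=0$. Following \cite{smoller}, one handles this by replacing each $u_i$ by the localized function $v_i:=u_i+\eta\psi$, where $\psi(x,t)=e^{\kappa t}(1+|x|^2)$ (so $\psi\ge1$ and $\psi\to+\infty$ as $|x|\to\infty$, locally uniformly in $t$), $\kappa$ is fixed large enough — using the uniform parabolicity, the boundedness of $c_i$, and the structure of $\mathcal{L}_i$ — that $(\mathcal{L}_i+c_i)\psi\ge0$ on $\Omega_T$, and $\eta>0$ is small, in particular small enough that $v_1(x_0,t_0)<0$. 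Then $v_i(\cdot,0)\ge\eta>0$ while $\min(v_1,v_2)$ is negative somewhere in $\Omega_{t_0}$, and since $v_i\to+\infty$ at spatial infinity there are a first time $t^{*}\in(0,t_0]$ and a point $x^{*}$, lying in a bounded set, at which $\min(v_1,v_2)$ first attains the value $0$ — say $v_1(x^{*},t^{*})=0$ — with $v_1,v_2\ge0$ on $\R^d\times[0,t^{*}]$, $v_2(x^{*},t^{*})\ge0$, and $x^{*}$ a spatial minimizer of $v_1(\cdot,t^{*})$. Here condition \eqref{growth} is what guarantees $t^{*}>0$, i.e. that the minimum is not lost as $t\to0$; at the end one lets $\eta\downarrow0$.

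At the touching point $(x^{*},t^{*})$ one invokes the infinitesimal form of the maximum principle. Since $x^{*}$ is an interior spatial minimum of $v_1(\cdot,t^{*})$, we have $\nabla_x v_1(x^{*},t^{*})=0$ and the Hessian $D^2_x v_1(x^{*},t^{*})$ is positive semidefinite, so $\sum_{k,l}a_{1,kl}\,\partial_{x_kx_l}v_1\ge0$ there, because the trace of the product of a positive definite matrix with a positive semidefinite one is nonnegative; and, $t^{*}$ being the first escape time, $\partial_t v_1(x^{*},t^{*})\le0$. Hence $(\mathcal{L}_1v_1)(x^{*},t^{*})\le0$, and since $v_1(x^{*},t^{*})=0$ also $((\mathcal{L}_1+c_1)v_1)(x^{*},t^{*})\le0$. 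On the other hand, the differential inequality for $u_1$ gives $((\mathcal{L}_1+c_1)u_1)(x^{*},t^{*})\ge f_1(x^{*},t^{*},u_1(x^{*},t^{*}),u_2(x^{*},t^{*}))+\delta$, while $((\mathcal{L}_1+c_1)v_1)=((\mathcal{L}_1+c_1)u_1)+\eta\,((\mathcal{L}_1+c_1)\psi)\ge((\mathcal{L}_1+c_1)u_1)$ at $(x^{*},t^{*})$. Finally, at that point $u_1=-\eta\psi(x^{*},t^{*})$ is slightly negative and $u_2\ge-\eta\psi(x^{*},t^{*})$, so for $\eta$ small the pair $(u_1,u_2)$ there lies in the range $-\varepsilon_0<u_1<0$, $u_2>-\varepsilon_0$, on which the hypothesis — together with continuity of $f_1$ if one needs to pass to the limit $u_1\uparrow0$ — forces $f_1\ge0$. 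Combining the two estimates yields $0\ge((\mathcal{L}_1+c_1)v_1)(x^{*},t^{*})\ge\delta>0$, a contradiction. Therefore $u_i\ge0$ throughout, i.e. $u(x,t)\in Q$ for all $(x,t)\in\R^d\times[0,T)$.

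So the proof is essentially the classical maximum principle, and I expect the localization on the unbounded domain — where condition \eqref{growth} and the barrier $\psi$ enter — to be the only real obstacle. Two features of the hypotheses are tailored precisely to it: the parameter $\delta>0$ supplies the slack needed to absorb the perturbation term $\eta\,(\mathcal{L}_i+c_i)\psi$ without having to perturb the equation itself; and imposing $f_i\ge0$ on the whole strip $-\varepsilon_0<u_i<0$, rather than merely on $\{u_i=0\}$, reflects the fact that the localized function $v_i$ vanishes at the touching point while $u_i$ is already slightly negative there — a technical point whose verification ($\eta\psi(x^{*},t^{*})<\varepsilon_0$) also relies on whatever mild a priori lower bound on $u$ is available in the situation at hand.
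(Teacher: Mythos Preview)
Your barrier/localization route is a standard device for unbounded domains, but it is \emph{not} the argument the paper uses, and it does not fit the stated hypotheses cleanly. The paper introduces no auxiliary $\psi$ at all. Instead it fixes $\varepsilon\in(0,\varepsilon_0)$ so small that $\|c_i\|_\infty\,\varepsilon<\delta$, and argues directly that $u$ cannot exit the enlarged quadrant $Q_\varepsilon=\{u_1\ge-\varepsilon,\ u_2\ge-\varepsilon\}$: if it did, let $t_0$ be the infimum of times at which some $u_i$ reaches $-\varepsilon$; condition \eqref{growth} together with $u(\cdot,0)\ge0$ forces $t_0>0$ and produces a finite point $(x_0,t_0)$ with, say, $u_1(x_0,t_0)=-\varepsilon$ and $u_2(x_0,t_0)\ge-\varepsilon$. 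At that parabolic interior minimum $\mathcal{L}_1u_1\le0$, hence $(\mathcal{L}_1+c_1)u_1\le -c_1\varepsilon\le\|c_1\|_\infty\varepsilon<\delta$, while the differential inequality and the hypothesis on $f_1$ (which applies because $-\varepsilon_0<u_1=-\varepsilon<0$ and $u_2>-\varepsilon_0$) give $(\mathcal{L}_1+c_1)u_1\ge f_1+\delta\ge\delta$.

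This buys two things your argument does not. First, there is no barrier to build, so no implicit boundedness of $a_{i,kl}$ or $b_{i,k}$ is required; your inequality $(\mathcal{L}_i+c_i)\psi\ge0$ for $\psi=e^{\kappa t}(1+|x|^2)$ needs upper bounds on those coefficients (the drift term contributes $2e^{\kappa t}\sum b_{i,k}x_k$), and Theorem~\ref{invariant} does not assume them. Second, the step you yourself flag as unresolved---showing $\eta\psi(x^*,t^*)<\varepsilon_0$, for which you appeal to ``whatever mild a priori lower bound on $u$ is available''---simply disappears: in the paper's scheme the touching value is \emph{exactly} $-\varepsilon$, chosen in advance inside $(-\varepsilon_0,0)$. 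That is precisely why the hypothesis on $f_i$ is stated on a full strip $-\varepsilon_0<u_i<0$, and why $\delta$ is there to dominate $\|c_i\|_\infty\varepsilon$ rather than to absorb a barrier correction.
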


And as a corollary we obtain

\begin{theorem}
\label{comparison}
Let $\delta$ be a positive number. 
Suppose 
that for each fixed $(x,t)\in\Omega_T$, $f_i(x,t,u_1,u_2)$ is a non decreasing function with respect to $u_j$ (where $j\not= i$, $i,j=1,2$) and, 
for some positive number $\varepsilon_0$, it satisfies the {\lq\lq}semi-lipschitz{\rq\rq} condition
\begin{equation}
\label{semi lip}
\begin{array}{c}
\ f_1(x,t,s+u_1,u_2)-f_1(x,t,u_1,u_2)\ge c_1(x,t)s,\\
f_2(x,t,u_1,s+u_2)-f_2(x,t,u_1,u_2)\ge c_2(x,t)s
\end{array}
\end{equation} 
for all $s\in (-\varepsilon_0,0)$ and all $((x,t),(u_1,u_2))\in\Omega_T\times\R^2$, where $c_i(x,t)$ is some bounded function in $\Omega_T$,\footnote{The conditions \eqref{semi lip} are used in 
\cite[e.g. (2.2)/\S 8.2]{pao}.} and else
\begin{equation}
\label{cond 2}
\begin{array}{c} 
\ f_1(x,t,u_1,s+u_2)-f_1(x,t,u_1,u_2)\ge-\delta',\\
f_2(x,t,s+u_1,u_2)-f_1(x,t,u_1,u_2)\ge-\delta'
\end{array}
\end{equation}
for all $s\in (-\varepsilon_0,0)$ and all $((x,t),(u_1,u_2))\in\Omega_T\times\R^2$,
where $\delta'$ is some positive number less than $\delta$.

{\em 1.} If $\hat{u}=(\hat{u}_1,\hat{u}_2)
\in C^{1,2}(\Omega_T)\cap C(\R^d\times [0,T))$ is a lower solution to the system 
\begin{equation}
\label{general rd s wo ci and delta}
\mathcal{L}_i(u_i)=f_i(x,t,u_1,u_2) 
\end{equation}   
i.e. $(\mathcal{L}_i\hat{u}_i)(x,t)\le f_i(x,t,\hat{u}_1(x,t),\hat{u}_2(x,t))$, 
for all $(x,t)\in\Omega_T$, and 
$u=(u_1,u_2)\in C^{1,2}(\Omega_T)\cap C(\R^d\times [0,T))$ is an upper solution to the system 
\begin{equation}
\label{general rd s 2}
\mathcal{L}_i(u_i)=f_i(x,t,u_1,u_2)+\delta  
\end{equation}
i.e. $\mathcal{L}_i(u_i)(x,t) \ge f_i(x,t,u_1(x,t),u_2(x,t))+\delta$ 
for all $(x,t)\in\Omega_T$, and such that $u-\hat{u}$ satisfies the condition 
\eqref{growth}, and $u_i(x,0)\ge \hat{u}_i(x,0)$ for all $x\in\R^d$, then 
$u_i(x,t)\ge \hat{u}_i(x,t)$ for all $(x,t)\in\Omega_T$.

{\em 2.} Analogously, if $\tilde{u}=(\tilde{u}_1,\tilde{u}_2)\in C^{1,2}(\Omega_T)\cap C(\R^d\times [0,T))$ is an upper solution to the system \eqref{general rd s wo ci and delta}, i.e. 
$(\mathcal{L}_i\tilde{u}_i)(x,t)\ge f_i(x,t,\tilde{u}_1(x,t),\tilde{u}_2(x,t))$, for all 
$(x,t)\in\Omega_T$ and  
$u=(u_1,u_2)\in C^{1,2}(\Omega_T)\cap C(\R^d\times [0,T))$ is a lower solution to the system 
\begin{equation}
\label{general rd s 3}
\mathcal{L}_i(u_i)=f_i(x,t,u_1,u_2)-\delta 
\end{equation}
i.e. $\mathcal{L}_i(u_i)(x,t) \le f_i(x,t,u_1(x,t),u_2(x,t))-\delta$ for all 
$(x,t)\in\Omega_T$, and such that $\tilde{u}-u$ satisfies the condition 
\eqref{growth}, and $\tilde{u}_i(x,0)\ge u_i(x,0)$ for all $x\in\R^d$, then 
$\tilde{u}_i(x,t)\ge{u}_i(x,t)$ for all $(x,t)\in\Omega_T$. 
\end{theorem}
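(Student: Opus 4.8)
The plan is to obtain both assertions from Theorem \ref{invariant}, applied not to the solutions themselves but to the difference of the two given functions; I carry out assertion 1 in detail and indicate the symmetric changes for assertion 2. First I set $w_i:=u_i-\hat u_i$. Then $w=(w_1,w_2)\in C^{1,2}(\Omega_T)\cap C(\R^d\times[0,T))$, it satisfies the growth condition \eqref{growth} by hypothesis, and $w(x,0)\in Q$ because $u_i(x,0)\ge\hat u_i(x,0)$. Since $\mathcal{L}_i$ is linear, $\hat u$ is a lower solution of \eqref{general rd s wo ci and delta} and $u$ an upper solution of \eqref{general rd s 2}, we get, for every $(x,t)\in\Omega_T$,
\[
\mathcal{L}_i(w_i)(x,t)\ \ge\ f_i\big(x,t,u_1(x,t),u_2(x,t)\big)-f_i\big(x,t,\hat u_1(x,t),\hat u_2(x,t)\big)+\delta .
\]

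The core of the argument is a lower bound for the reaction difference on the region where $w$ lies just outside $\partial Q$. Fix $(x,t)$ with $-\varepsilon_0<w_i(x,t)<0$ and $w_j(x,t)>-\varepsilon_0$, and write $u_k=\hat u_k+w_k$. Telescoping $f_i(x,t,u_1,u_2)-f_i(x,t,\hat u_1,\hat u_2)$ through the intermediate point in which only the $j$-th argument has been updated to $\hat u_j+w_j$, the increment in which only the $i$-th argument changes is $\ge c_i(x,t)\,w_i$ by \eqref{semi lip} (with $s=w_i\in(-\varepsilon_0,0)$), and the increment in which only the $j$-th argument changes is $\ge-\delta'$ — this is $\ge 0$ when $w_j\ge 0$ by monotonicity of $f_i$ in its $j$-th slot, and $\ge-\delta'$ when $-\varepsilon_0<w_j<0$ by \eqref{cond 2} (with $s=w_j$). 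Hence, on this region, setting $\tilde c_i:=-c_i$ (which is bounded) and $\tilde\delta:=\delta-\delta'>0$,
\[
\big((\mathcal{L}_i+\tilde c_i)w_i\big)(x,t)\ \ge\ \tilde\delta\qquad\text{whenever }-\varepsilon_0<w_i(x,t)<0\text{ and }w_j(x,t)>-\varepsilon_0 .
\]

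To put this in the form required by Theorem \ref{invariant}, I define the auxiliary reaction functions
\[
\tilde f_i(x,t,v_1,v_2):=
\begin{cases}
0, & -\varepsilon_0<v_i<0\text{ and }v_j>-\varepsilon_0,\\[3pt]
\big((\mathcal{L}_i+\tilde c_i)w_i\big)(x,t)-\tilde\delta, & \text{otherwise.}
\end{cases}
\]
Then $\tilde f_i\ge 0$ on the strip $\{-\varepsilon_0<v_i<0,\ v_j>-\varepsilon_0\}$, and the inequality $\big((\mathcal{L}_i+\tilde c_i)w_i\big)(x,t)\ge\tilde f_i\big(x,t,w_1(x,t),w_2(x,t)\big)+\tilde\delta$ holds for all $(x,t)\in\Omega_T$: at points where $w(x,t)$ lies in the strip it reduces to the last displayed estimate, and at all other points it is an equality by the definition of $\tilde f_i$. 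Thus $w$ meets every hypothesis of Theorem \ref{invariant} with $c_i,\delta,f_i$ there replaced by $\tilde c_i,\tilde\delta,\tilde f_i$, so $w(x,t)\in Q$ on $\R^d\times[0,T)$, i.e. $u_i\ge\hat u_i$ on $\Omega_T$. For assertion 2 I would repeat the computation with $w_i:=\tilde u_i-u_i$, now using that $\tilde u$ is an upper solution of \eqref{general rd s wo ci and delta} and $u$ a lower solution of \eqref{general rd s 3}; the reaction difference $f_i(x,t,\tilde u_1,\tilde u_2)-f_i(x,t,u_1,u_2)$ telescopes exactly as above, and $\tilde u-u$ satisfies \eqref{growth} with nonnegative trace at $t=0$ by hypothesis.

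I expect the main obstacle to be that \eqref{semi lip} and \eqref{cond 2} give information only for negative increments of size $<\varepsilon_0$, so there is no uniform lower bound on the reaction difference away from $\partial Q$, whereas Theorem \ref{invariant} demands the differential inequality on all of $\Omega_T$. The piecewise definition of $\tilde f_i$ is what bridges this gap, exploiting that the sign condition in Theorem \ref{invariant} is imposed on $\tilde f_i$ only on the strip $\{-\varepsilon_0<v_i<0,\ v_j>-\varepsilon_0\}$, precisely the region where $w$ is controlled. A subsidiary point needing care is the bookkeeping of the telescoping so that each one-variable increment falls in the range $(-\varepsilon_0,0)$ covered by \eqref{semi lip}/\eqref{cond 2} or is nonnegative and covered by monotonicity, together with checking that $\tilde c_i=-c_i$ is an admissible (bounded) zeroth-order coefficient in Theorem \ref{invariant}.
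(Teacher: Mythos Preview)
Your proof is correct and follows essentially the same approach as the paper: set $w_i=u_i-\hat u_i$, telescope the reaction difference using \eqref{semi lip}, monotonicity in $u_j$, and \eqref{cond 2}, then invoke Theorem~\ref{invariant} with the zeroth-order coefficient $-c_i$ and the smaller gap $\delta-\delta'$. The only cosmetic difference is that the paper defines the auxiliary reaction $g_i(x,t,w_1,w_2)=f_i(x,t,w+\hat u)-f_i(x,t,\hat u)-c_iw_i+\delta'$ globally (so the differential inequality holds everywhere tautologically and one only checks $g_i\ge0$ on the strip), whereas you engineer a piecewise $\tilde f_i$; both devices serve the same purpose and the underlying argument is identical.
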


\medskip

Next we give the main ideas to prove theorems \ref{local} and \ref{global}.
 
From now on, we refer to problem \eqref{eq1-general}, \eqref{functions}--\eqref{in cond yi}, or, equivalently, \eqref{eq1-general-2}--\eqref{in cond ui}, \eqref{a}--\eqref{Fi}, simply as
problem \eqref{eq1-general}--\eqref{in cond yi}, or, \eqref{eq1-general-2}--\eqref{in cond ui}.

We prove Theorem \ref{local} by taking the limit of a subsequence given by the iterative scheme
\begin{equation}
\label{scheme}
\left\{\begin{array}{l}
(u_i^n)_t-\alpha_i(y_i^{n-1}){(u_i^n)}_{xx}+\beta_i(y_i^{n-1}){(u_i^n)}_x=
\tilde{f}_i(y_i^{n-1},u_1^{n-1},u_2^{n-1})\\
(y_i^{n-1})_t=-A_iy_i^{n-1}\tilde{f}(u_i^{n-1})\\
(u_i^n,y_i^{n-1})\big|_{t=0}=(u_{i,0},y_{i,0}),
\end{array}\right.
\end{equation}
$n=1,2,\cdots$, \ starting from an initial function $(u_1^0,u_2^0)$ in 
$C^{1,\frac{1}{2}}(\R\times[0,T])$ 
for some sufficiently small time $T>0$, where $\tilde{f}$ is the function that coincides with the Arrhenius function $f(s)=\mbox{e}^{-\frac{E}{s}}$ for $s>0$ and it is equal to zero for $s\le0$, and, $\tilde{f}_i$ 
is the function $f_i$ in \eqref{functions} except for the Arrhenius function $f$ which is replaced by 
$\tilde{f}$.  
More precisely, we show that there is a positive time $T$, depending on the initial data $u_{i,0},y_{i,0}$ 
and on the parameters in the equations (i.e. on $\lambda_i, a_i$, etc.), such that the operator \ 
${\mathcal A}(u_1,u_2)=(w_1,w_2)$, \ where $(w_1,w_2)$ solves
\begin{equation}
\label{eq4}
\left\{\begin{array}{l}(w_i)_t-\alpha_i(y_i){(w_i)}_{xx}+\beta_i(y_i){(w_i)}_x=
\tilde{f}_i(y_i,u_1,u_2)\\
(y_i)_t=-A_iy_i\tilde{f}(u_i)\\
(w_i,y_i)\big|_{t=0}=(u_{i,0},y_{i,0}),
\end{array}\right.
\end{equation}
is well defined in some ball $\Sigma:=\{u=(u_1,u_2)\in C^{1,\frac{1}{2}}(\R\times [0,T]);
\|u_i\|_{C^{1,\frac{1}{2}}(\R\times [0,T])}\leq R, \ i=1,2\}$, $R>0$, i.e. there exist positive number $R,T$ such that ${\mathcal A}(u)\in \Sigma$ for all $u\in\Sigma$.
See Lemma \ref{lem2.1}. In particular, the sequence $\{u^n\}= \{(u_1^n,u_2^n)\}$ given by 
${\mathcal A}(u^n)={\mathcal A}(u^{n-1})$, starting from any $u^0\in\Sigma$, 
is bounded in the norm $\|\cdot\|_{1,1/2}$. Therefore, by Arzel\`a-Ascoli's theorem,
there exists a function $u=(u_1,u_2)\in\Sigma$  
and a subsequence of $\{u^n\}$, which we still denote by $\{u^n\}$, that converges to $u$, uniformly on bounded sets in $\R\times [0,T]$. \ To show that the limit $u$ is a solution
of \eqref{eq1-general-2} and \eqref{in cond ui}, we use the integral representation
\begin{equation}\label{aprox repr}
\begin{array}{rl}
u_i^{n}(x,t) = &
\int_\R\Gamma_{i,n}(x,t,\xi,0)u_{i,0}(\xi)d\xi \\
& + \int_0^t\int_\R
\Gamma_{i,n}(x,t,\xi,\tau)\tilde{f}_i(y_i^{n-1},u_1^{n-1},u_2^{n-1})(\xi,\tau)d\xi d\tau,
\end{array}
\end{equation}
for the solution $u_i^{n}$ of the parabolic equation
\begin{equation}
\label{parabolic n}
(u_i^n)_t-\alpha_i(y_i^{n-1}){(u_i^n)}_{xx}+\beta_i(y_i^{n-1}){(u_i^n)}_x=
\tilde{f}_i(y_i^{n-1},u_1^{n-1},u_2^{n-1})
\end{equation}
occurring in \eqref{scheme}, where $\Gamma_{i,n}$ denotes the fundamental solution of the associated  homogeneous equation $\mathcal{L}_{i,n}w_i=0$, for 
$\mathcal{L}_{i,n}:=\partial_t-\alpha_i(y_i^{n-1})\partial_{xx}+\beta_i(y_i^{n-1})\partial_x$.\break\hfill 
Now suppose that the sequence of fundamental solutions $\{\Gamma_{i,n}\}$ converges, in some appropriate sense, to the fundamental solution $\Gamma_i$ of the also parabolic equation
$\mathcal{L}_iw_i=0$, for 
$\mathcal{L}_i:=\partial_t-\alpha_i(y_i)\partial_{xx}+\beta_i(y_i)\partial_x$, when $n$ tends to
infinite, where $y_i=y_{i,0}(x)\mbox{e}^{-A_i \int_0^tf(u_i))ds}$. Then, having that the sequence $\{u_i^n\}$ is bounded in $\R\times [0,T]$, for some positive $T$, and that it converges uniformly to $u_i\in C^{1,1/2}(\R\times [0,T])$ in bounded sets in $\R\times [0,T]$, it follows from \eqref{aprox repr}
that $u_i$ satisfies the integral equation
\begin{equation}\label{repr}
\begin{array}{rl}
u_i(x,t) = &
\int_\R\Gamma_i(x,t,\xi,0)u_{i,0}(\xi)d\xi \\
& + \int_0^t\int_\R
\Gamma_i(x,t,\xi,\tau)\tilde{f}_i(y_i,u_1,u_2)(\xi,\tau)d\xi d\tau,
\end{array}
\end{equation}
for $(x,t)\in\R\times [0,T]$. Thus, by standard arguments, it follows that\break\hfill
$u_i\in C^{2,1}(\R\times(0,T])\cap C^{1,\frac{1}{2}}(\R\times[0,T])$ and it is a solution of
\eqref{eq1-general-2}--\eqref{in cond ui}. \ In Section \ref{fundamental_solution} we show the continuous dependence of fundamental solutions of parabolic equations with respect to the
coefficients of the equations and, as a consequence, the convergence of $\{\Gamma_{i,n}\}$ to 
$\{\Gamma_i\}$, when $n\to\infty$. 
To conclude the last assertion in Theorem \ref{local} we shall show in Section \ref{local proof}, with the help of the {\lq\lq}generalized Young's inequality{\rq\rq} \cite[p. 9]{Fol1}
and the fact that the fundamental solution $\Gamma_{i,n}$ is a {\lq\lq}regular kernel{\rq\rq}, uniformly with respect to $n$ (see Section \ref{local proof}), that the sequence $\{u_i^n\}$
remains in $L^p$ for all $t\in (0,T)$, with $\|u_i^n(\cdot,t)\|_{L^p}$ uniformly bounded with
respect to $t$ and $n$, if the initial data $u_{i,0}\in L^p$ and $T$ is sufficiently small. 
Then the assertion follows by Banach-Alaoglu's theorem. 
To show that the obtained solution $u=(u_1,u_2)$ is in the sector $\langle 0,\varphi\rangle_T$,
we observe that $u=(u_1,u_2)$ is a solution of the Cauchy problem
\begin{equation}
\label{Cauchy}
\left\{\begin{array}{ll}\mathcal{L}_i(w_i)\equiv(w_i)_t-\alpha_i(y_i)(w_i)_{xx}+\beta_i(y_i)(w_i)_x
=\tilde{f}_i(y_i,w_1,w_2), &
x\in\R, \ t>0\\
w_i(x,0)=u_{i,0}(x), & x\in\R
\end{array}
\right.
\end{equation}
in the unknown $w_i$, for $y_i$ given by \eqref{yi}, and show in Section \ref{local solution} 
that the function 
$\tilde{f}_i(x,t,w_1,w_2)\equiv \tilde{f}_i(y_i(x,t),w_1,w_2)$
satisfies all the hypotheses of Theorem \ref{comparison}, or, more precisely, Corollary 
\ref{comparison cor} in Section \ref{invariant and comparison}. Let us just mention here
that the reaction function $\tilde{f}_i$ in \eqref{Cauchy} is increasing with respect
to $w_j$ ($i,j=1,2$, $j\not=i$). Indeed, from \eqref{functions} we have 
$\partial \tilde{f}_i/\partial w_j=q/(a_i+b_iy_i)$ for all $w_j\in\R$.
In Subsection \ref{local proof} we show that the system \eqref{Cauchy} fulfills all the
hypotheses of Corollary \ref{comparison cor}.

\smallskip

To prove Theorem \ref{global}, we let $[0,T*)$, $0<T^*\le\infty$, to be a maximal\break\hfill interval in which there exists a solution $u^*$ to the problem \eqref{eq1-general-2}-\eqref{in cond ui} in the space\break\hfill  
$X_{T^*}:= C^{2,1}( \R\times(0,T^*) )
\cap  C_{\mbox{\tiny{loc}}}^{1,\frac{1}{2}}(\R\times[0,T^*))\cap L_{\!\!\!\mbox{ \tiny{loc} }}^\infty( (0,T^*) ; L^p(\R) )$ \footnote{Similarly as in the statement of Theorem \ref{global}, here the term {\lq\lq}{$loc$}{\rq\rq} stands for {\lq\lq}locally{\rq\rq} in time, i.e. a function $u\in C_{\mbox{\tiny{loc}}}^{1,\frac{1}{2}}(\R\times[0,T^*))\cap L_{\!\!\!\mbox{ \tiny{loc} }}^\infty( (0,T^*) ; L^p(\R) )$ if $u\big|\mathbb{R}\times [0,T]\in C^{1,\frac{1}{2}}(\R\times[0,T])\cap L^\infty((0,T) ; L^p(\R) )$, for any $T\in (0,T^*)$.} \ intercepted with the sector $\langle 0,\varphi\rangle_{T^*}$, i.e. if $T\ge T^*$ and $u$ is a solution of 
\eqref{eq1-general-2}-\eqref{in cond ui} in\break\hfill $X_T\cap \langle 0,\varphi\rangle_{T}$ that coincides with $u^*$ in 
$[0,T^*)$ then $T=T^*$. (The existence of $T^*$ can be assured in the standard way by Zorn's lemma:
we consider the set of pairs $(u, X_T\cap \langle 0,\varphi\rangle_T)$, such that $u$ is a solution of 
\eqref{eq1-general-2}-\eqref{in cond ui} in $X_T\cap \langle 0,\varphi\rangle_T$, $0<T\le\infty$, ordered with the
relation $(u, X_T\cap \langle 0,\varphi\rangle_T)\le (u', X_T'\cap \langle 0,\varphi\rangle_{T'})$ if $T\le T'$ and 
$u'|[0,T]=u$. Any subset $\mathcal{C}$ of this set of pairs that is totally ordered has the upper
bound $(\overline{u}, X_{\overline{T}}\,\cap \langle 0,\varphi\rangle_{\overline{T}})$, where $\overline{T}$
is the supremum of the set of $T$ such that $(u, X_T\cap \langle 0,\varphi\rangle_T)\in\mathcal{C}$ \ 
($\overline{T}=\infty$ if these set of $T$ is unbounded) and $\overline{u}$ is defined by 
$\overline{u}|[0,T]=u$ whatever it is $(u, X_T\cap \langle 0,\varphi\rangle_T)\in\mathcal{C}$. Then, by Zorn's 
lemma the above set of pairs has a maximum element, i.e. there exists a pair 
$(u^*, X_{T^*}\cap \langle 0,\varphi\rangle_{T^*})$ such that if $(u, X_T\cap \langle 0,\varphi\rangle_T)$ is any other pair
such that $(u, X_T\cap \langle 0,\varphi\rangle_T)\ge (u^*, X_{T^*}\cap \langle 0,\varphi\rangle_{T^*})$
then $(u, X_T\cap \langle 0,\varphi\rangle_T)\le (u^*, X_{T^*}\cap \langle 0,\varphi\rangle_{T^*})$ i.e. if
$u$ is a solution of \eqref{eq1-general-2}-\eqref{in cond ui} in $X_T\cap \langle 0,\varphi\rangle_T$ such
that $T\ge T^*$ and $u|[0,T^*)=u^*$ then $T=T^*$ and $u=u^*$.) Then we shall show in Section 
\ref{global solution} that if $T^*<\infty$ then we have a contradiction, by proving that, in this case, the maximal solution $u^*$ has a continuous extension up to the time $T^*$,
and that this extension is lipschitz continuous and it is in $L^p$, as a function of $x\in\mathbb{R}$, for $t=T^*$,  thus $u^*$ can be extended to a larger time, accordingly with  Theorem \ref{local}.
The idea to extend $u^*$ up to the time $T^*$ is, again, to use the integral
representation \eqref{repr}, for $u_i=u_i^*$, $(x,t)\in\mathbb{R}\times [0,T^*)$,
with $\Gamma_i$ being the fundamental solution of the equation $\mathcal{L}_i^*w_i=0$, 
for $\mathcal{L}_i^*:=\partial_t-\alpha_i(y_i^*)\partial_{xx}+\beta_i(y_i^*)\partial_x$, where  
$y_i^*=y_{i,0}(x)\mbox{e}^{-A_i \int_0^tf(u_i^*)ds}$, 
and with $f_i(y_i,u_1,u_2)=f_i(y_i^*,u_1^*,u_2^*)$. 
To accomplish this, we need to prove that the derivatives $\partial_x u_i^*$
are bounded in $\mathbb{R}\times (0,T^*)$ (see Corollary \ref{uix is bounded}) 
and we do that by the {\lq\lq}method of auxiliary functions{\rq\rq}\footnote{This terminology was used by R. Finn in the MathSciNet review \#MR0064286 (16,259b). In this review he also points
out that this method was {\lq\lq}developed by Picard [see, e.g., Courant and Hilbert,
Methoden der mathematischen Physik, Bd II, Springer, Berlin, 1937, pp. 274--276],
Bernstein [Math. Ann. 69, 82--136 (1910); Doklady Akad. Nauk SSSR (N.S.) 18, 385--388
(1938)] and others{\rq\rq}.}, i.e. following \cite{Ole} (or \cite{Ole-v1,Ole-v2}; see \cite[p. 107]{Ole}), we make a substitution $u_i^*=h_i(v_i)$ for an appropriate function $h_i$
(in particular, such that $h_i'$ is positive and bounded) and estimate $|\partial_x
v_i|$ (instead of trying to estimate $|\partial_x u_i^*|$) at a maximum point, by
looking for the equation satisfied by $v_i$. This leads to 
 some technical estimates where we use the explicit forms for the functions
$\alpha(y_i)$, $\beta_i(y_i)$ and $f_i(y_i,u_1,u_2)$ in \eqref{functions} (see
Section \ref{global solution}). Certainly, it would be a very interesting
investigation to extend our main results regarding the system \eqref{eq1-general} (theorems \ref{local} and
\ref{global}) to more general functions $\alpha(y_i)$, $\beta_i(y_i)$ and
$f_i(y_i,u_1,u_2)$ (or functions $a$, $b$ and $F_i$ in \eqref{eq1-general-2}).

The preceding paragraphs give the fundamental and intuitive ideas to prove theorems \ref{local} 
and \ref{global}. In the next sections we give the rigorous and complete proofs of all theorems stated above. In Section \ref{fundamental_solution} we present a brief summary of the construction
of fundamental solutions for parabolic equations by the parametrix method and state some important
known estimates. Also in this section we show the dependence of the fundamental solution on the
coefficients of the equations. In Section \ref{local solution} we prove theorem
\ref{local} and in Section \ref{invariant and comparison} we prove theorems \ref{invariant} and 
\ref{comparison} and state and prove two corollaries which are version of these theorems in the case one has
continuous dependence of the solution of the system with respect to the 
reaction functions, and also make three remarks giving alternative conditions for the hypotheses of theorems \ref{invariant} and \ref{comparison}.
Finally, in Section \ref{global solution} we prove theorem \ref{global}.

\section{The fundamental solution}
\label{fundamental_solution}

In this section, we present a summary on the construction by the parametrix method and main
properties of the fundamental solution for parabolic equations, 
 and show its continuous dependence with respect to the coefficients of the equations.

\subsection{Definition and some properties}
\label{properties}

Consider the equation and the operator $\mathcal{L}$ given by
\begin{equation}
\label{Lu}
\mathcal{L}u\equiv\dfrac{\partial u}{\partial t}-a(x,t)\frac{\partial^2u}{\partial x^2}+
b(x,t)\frac{\partial u}{\partial x}+c(x,t)u=0\,,
\end{equation}
in the set $\Omega_T:=\{(x,t);\, x\in\R,\, 0\leq t\leq T\}$, for some positive
number $T$, with the coefficients $a, b, c$ in the class
$C^{\alpha,\frac{\alpha}{2}}(\Omega_T)$, for some $\alpha\in (0,1]$, with
$\mathcal{L}$ being a uniform parabolic operator in $\Omega_T$, i.e., there are strictly positive constants 
$\lambda_0,\lambda_1$ such that
\begin{equation}
\label{parabolic}
\lambda_0\leq a(x,t)\leq \lambda_1
\end{equation}
for all $(x,t)$ in $\Omega_T$.

\bigskip

\begin{definition} 
A {\em fundamental solution} of the parabolic equation \eqref{Lu} is a\break\hfill
 function $\Gamma(x,t,\xi,\tau)$, defined for all $(x,t)$ and $(\xi,\tau)$ in $\Omega_T$ with $t>\tau$, such that $\mathcal{L}\Gamma = 0$ in $\Omega_T$, as a function of $(x,t)$, for each fixed $(\xi,\tau)\in\Omega_T$, and \break\hfill
$\lim_{t\rightarrow \tau+}\int_{\R}\Gamma(x,t,\xi,\tau)\psi(\xi)d\xi=\psi(x)$, for all 
$x\in\mathbb{R}$ and $\tau\in [0,T)$, for any continuous function $\psi(x)$ such that
$|\psi(x)|\leq c\mbox{e}^{hx^2}$, for all $x\in\R$, for some positive constants $c$ and
$h$ with $h<1/(4\lambda_1T)$.
\end{definition}

\smallskip

Fundamental solutions for parabolic equations was found by E. E. Levi \cite{Lev}, using the {\em parametrix method}. Our presentation in this section follows mostly \cite{Fri} and \cite{Lad}. Accordingly, the fundamental solution to the equation \eqref{Lu} is given by
\begin{equation}
\label{gamma}
\Gamma(x,t,\xi,\tau)=
Z(x,t,\xi,\tau)+\int_\tau^t\int_{\R}Z(x,t,y,\sigma)\phi(y,\sigma,\xi,\tau)dyd\sigma \,,
\end{equation}
where $(x,t),\,(\xi,\tau)\in \Omega_T$, $t>\tau$, the function $Z(x,t,\xi,\tau)$,
as a function $(x,t)$, is the fundamental solution of the heat equation
$\frac{\partial u}{\partial t}-a(\xi,\tau)\frac{\partial^2 u}{\partial x^2}=0$, i.e.
\begin{equation}
\label{z}
Z(x,t,\xi,\tau)=\frac{1}{(4\pi a(\xi,\tau)(t-\tau))^\frac{1}{2}}\mbox{e}^{-\frac{(x-\xi)^2}{4a(\xi,\tau)(t-\tau)}},
\end{equation}
for each fixed $(\xi,\tau)\in\Omega_T$, and
\begin{equation}
\label{phi}
\phi(x,t,\xi,\tau)=\sum^{\infty}_{m=1}(-1)^m(\mathcal{L}Z)_m(x,t,\xi,\tau),
\end{equation}
where $(\mathcal{L}Z)_1=\mathcal{L}Z=(a(\xi,\tau)-a(x,t))\frac{\partial^2 Z}{\partial x^2}+b\frac{\partial Z}{\partial x}+cZ$ and, for $m\ge1$,
\begin{equation}
\label{defLZm}
(\mathcal{L}Z)_{m+1}(x,t,\xi,\tau)=\int_{\tau}^t\int_{\R}[\mathcal{L}Z(x,t,y,\sigma)](\mathcal{L}Z)_m(y,\sigma,\xi,\tau)dyd\sigma.
\end{equation}

Next we give some important estimates, which, in particular, show that the\break\hfill
function $\Gamma$ given by \eqref{gamma} is well defined, i.e. the series in \eqref{phi}
converges and \eqref{gamma} yields a smooth function $\Gamma$, for $t>\tau$. In the
sequel, $(x,t),(\xi,\tau)\in \Omega_T$, $t>\tau$, and, $K$ and $C$ denote any positive constants.

For the function $Z(x,t,\xi,\tau)$, we have the estimate
\begin{equation}
\label{eq0.7}
|D_t^rD_x^s Z(x,t,\xi,\tau)|\leq K(t-\tau)^{-\frac{1+2r+s}{2}}\mbox{e}^{-C\frac{(x-\xi)^2}{t-\tau}},
\end{equation}
for all nonnegative integers $r, s$, where throughout $D_t^r$ or $\partial_t^r$ and 
$D_x^s$ or $\partial_x^s$ stand for the derivatives with
respect to $t$ and $x$ of order $r$ and $s$, respectively. Besides, since $\int_\R Z(z,t,\xi,\tau)dz=1$, we have that
\begin{equation}
\label{eq0.7.2}
\int_\R D_t^rD_z^sZ(z,t,\xi,\tau)dz=0,
\end{equation}
for all $r,s\in\mathbb{Z}_{+}$ such that $2r+s>0$. Finally, $Z$ and its derivatives are H\"older continuous in $\xi$, i.e.
\begin{equation}
\label{eq0.7.3}
|D_t^rD_z^sZ(z,t,\xi,\tau)-D_t^rD_z^sZ(z,{\xi}',t,\tau)|\leq \frac{K{|\xi-{\xi}'|}^\alpha}{{(t-\tau)}^{-\frac{2r+s+1}{2}}}\mbox{e}^{-C\frac{z^2}{t-\tau}},
\end{equation}
where $C=C(\lambda_1)$ and $K=K(\lambda_0,\lambda_1,\|a\|_{\alpha,\frac{\alpha}{2}})$.
For the function $\phi(x,t,\xi,\tau)$, we have the estimates
\begin{equation}
\label{eq0.8}
|\phi(x,t,\xi,\tau)|\leq \frac{K}{(t-\tau)^{\frac{3-\alpha}{2}}}\mbox{e}^{-C\frac{(x-\xi)^2}{t-\tau}},
\end{equation}
where $C=C(\lambda_1)$ and $K=K(\lambda_0,\lambda_1, \|a\|_{\alpha,\frac{\alpha}{2}},\|b\|_\infty,\|c\|_\infty,T)$, and, for any $\gamma\in (0,\alpha)$,
\begin{equation}
\label{eq0.9}
|\phi(x,t,\xi,\tau)-\phi(y,t,\xi,\tau)|\leq \frac{K|x-y|^\gamma}{(t-\tau)^{\frac{3-(\alpha-\gamma)}{2}}}\left(\mbox{e}^{-C\frac{(x-\xi)^2}{t-\tau}}+\mbox{e}^{-C\frac{(y-\xi)^2}{t-\tau}}\right),
\end{equation}
where $C$ and $K$ are as in \eqref{eq0.8}.

Finally, for the function $\Gamma(x,t,\xi,\tau)$ we have the estimate (see also Corollary 
\ref{uniform L} in this paper)
\begin{equation}
\label{eq0.10}
|D_t^rD_x^s\Gamma(x,t,\xi,\tau)|\leq \frac{K}{(t-\tau)^{\frac{1+2r+s}{2}}}\mbox{e}^{-C\frac{(x-\xi)^2}{t-\tau}},
\end{equation}
for all $r,s\in\mathbb{Z}_{+}$ such that  $2r+s\leq 2$, and, again,
$C$ and $K$ are as in \eqref{eq0.8}. 
 Besides, the fundamental solution $\Gamma$ is nonnegative (see \cite{Aro} and \cite{Ole2}).
 
\smallskip

Now consider the Cauchy problem
\begin{equation}
\label{eq0.11}
\left\{\begin{array}{l}
\mathcal{L}u(x,t)=f(x,t),\quad in \quad \R\times(0,T],\ T>0\\
u(x,0)=u_{0}(x),\quad in \quad \R,
\end{array}\right.
\end{equation}
where $\mathcal{L}$ is defined in \eqref{Lu} and $f$ and $u_0$ are given continuous functions,
in $\R\times (0,T]$ and $\R$, respectively, bounded by the exponential growth
\begin{equation}
\label{exp growth}
|f(x,t)|, \ |u_0(x)|\leq c \mbox{e}^{hx^2}
\end{equation}
for positive constants $c$ and $h$ such that $h<4/(\lambda_1T)$, and for all $x\in\mathbb{R}$ and $t\in [0,T]$.
The following theorem gives a representation formula for its solution using the fundamental solution.

\begin{theorem}
\label{teo0.1}
Let $\Gamma$ be the fundamental solution of the equation $\mathcal{L}u=0$, where $\mathcal{L}$ is
the parabolic operator in \eqref{eq0.11}.
If, besides \eqref{exp growth}, the function $f$ is locally H\"older continuous in $x$, uniformly with respect to $t$, then the function
\begin{equation}
\label{integral representation}
u(x,t)=\int_{\R}\Gamma(x,t,\xi,0)u_0(\xi)d\xi+\int_0^t\int_{\R}\Gamma(x,t,\xi,\tau)f(\xi,\tau)d\xi d\tau
\end{equation}
is the unique solution of the Cauchy problem \eqref{eq0.11} in  $C^{2,1}(\R\times(0,T])\cap C(\R\times[0,T])$ bounded by an exponential growth with respect to $x$, as in \eqref{exp growth}.
\end{theorem}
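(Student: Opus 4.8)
The plan is to verify separately that the two integrals in \eqref{integral representation} solve, respectively, the homogeneous Cauchy problem with data $u_0$ and the inhomogeneous equation with zero initial data, and then to invoke uniqueness via the maximum principle. First I would treat the \emph{volume potential} $w(x,t):=\int_0^t\int_\R\Gamma(x,t,\xi,\tau)f(\xi,\tau)\,d\xi\,d\tau$. Because $\Gamma$ has the singularity $(t-\tau)^{-1/2}$ coming from \eqref{eq0.10} with $2r+s=0$, the integral in $\tau$ is absolutely convergent, and so is the one obtained after one $x$-differentiation (singularity $(t-\tau)^{-1}$, still integrable near $\tau=t$). The second $x$-derivative and the $t$-derivative each produce the borderline singularity $(t-\tau)^{-3/2}$, which is \emph{not} integrable; here one uses the classical device of writing, for $2r+s=2$,
\begin{equation*}
D\!\left(\int_0^t\!\!\int_\R\Gamma f\,d\xi d\tau\right)
=\int_0^t\!\!\int_\R D\Gamma(x,t,\xi,\tau)\bigl(f(\xi,\tau)-f(x,\tau)\bigr)d\xi d\tau
+\int_0^t f(x,\tau)\Bigl(\int_\R D\Gamma\,d\xi\Bigr)d\tau,
\end{equation*}
so that the local Hölder continuity of $f$ in $x$ (uniformly in $t$) supplies a factor $|\xi-x|^\alpha$ that, combined with the Gaussian in \eqref{eq0.10}, reduces the exponent to $(t-\tau)^{-1+\alpha/2}$, which is integrable, while the second term is handled using an analogue of \eqref{eq0.7.2} for $\Gamma$. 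Differentiating under the integral sign is then justified by dominated convergence, and applying $\mathcal{L}$ to $w$ and using $\mathcal{L}\Gamma=0$ in $(x,t)$ together with the delta-function limit in the definition of $\Gamma$ gives $\mathcal{L}w=f$ in $\R\times(0,T]$; letting $t\to0+$ shows $w(x,0)=0$. The exponential growth bound \eqref{exp growth} on $f$, together with the Gaussian decay of $\Gamma$ and the restriction $h<4/(\lambda_1 T)$ (which keeps $h<1/(4\lambda_1 T)$-type Gaussian integrals finite, possibly after shrinking to subintervals and iterating), keeps $w$ of the same exponential class.

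Next I would handle the \emph{Poisson-type term} $v(x,t):=\int_\R\Gamma(x,t,\xi,0)u_0(\xi)\,d\xi$. Since $\mathcal{L}\Gamma(\cdot,\cdot,\xi,0)=0$ for $t>0$ and the estimates \eqref{eq0.10} allow differentiation under the integral for $t>0$, we get $\mathcal{L}v=0$ in $\R\times(0,T]$. The initial condition $v(x,0)=u_0(x)$ is exactly the defining property of the fundamental solution (the limit $\lim_{t\to0+}\int_\R\Gamma(x,t,\xi,0)\psi(\xi)d\xi=\psi(x)$ with $\psi=u_0$), valid since $|u_0(x)|\le c\,\mathrm{e}^{hx^2}$ with $h$ small enough; continuity of $v$ up to $t=0$ follows from the same limit together with the Gaussian estimate. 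Adding, $u=v+w$ belongs to $C^{2,1}(\R\times(0,T])\cap C(\R\times[0,T])$, solves \eqref{eq0.11}, and satisfies an exponential growth bound with respect to $x$.

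Finally, \textbf{uniqueness}: if $u_1,u_2$ are two solutions in $C^{2,1}(\R\times(0,T])\cap C(\R\times[0,T])$ with exponential growth, then $u:=u_1-u_2$ solves $\mathcal{L}u=0$, $u(\cdot,0)=0$, and $|u(x,t)|\le C\mathrm{e}^{Hx^2}$. This is precisely the Tychonoff-type uniqueness class for \eqref{Lu}; the standard argument applies the maximum principle to the comparison function $\pm u(x,t)\le \varepsilon\,(T'-t)^{-1/2}\mathrm{e}^{x^2/(4\lambda_1(T'-t))}$ on a short time interval $[0,T']$ with $T'$ chosen (depending on $\lambda_1$ and $H$) so the comparison function dominates $u$ on the parabolic boundary of large space-time boxes; letting the box grow and then $\varepsilon\to0$ forces $u\equiv0$ on $[0,T']$, and iterating covers $[0,T]$. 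I expect the main obstacle to be the borderline singularity in the second-order and time derivatives of the volume potential: making the difference-quotient splitting rigorous (interchanging limit and integral, controlling the boundary term $\int_\R D\Gamma\,d\xi$, and checking continuity of $\mathcal{L}w$ up to $t=0$) is where all the delicate estimates \eqref{eq0.7}--\eqref{eq0.10} are actually used; everything else is bookkeeping with Gaussian integrals. Since the paper explicitly says it follows \cite{Fri} and \cite{Lad}, I would cite those for the parametrix-level estimates and present only the structure above.
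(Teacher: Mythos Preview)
Your sketch is correct and follows the classical parametrix argument. Note, however, that the paper does not give its own proof of this theorem: immediately after the statement it simply writes ``For a proof, see e.g.\ \cite[p.~25]{Fri} and \cite[p.~182]{Pro}.'' So there is nothing to compare beyond observing that your outline (volume potential via the $f(\xi,\tau)-f(x,\tau)$ splitting, Poisson term via the defining limit of $\Gamma$, and Tychonoff-type uniqueness) is precisely the standard proof one finds in Friedman's book, which is what the paper defers to.
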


\smallskip

For a proof, see e.g. \cite[p. 25]{Fri} and \cite[p.182]{Pro}.

\smallskip

\begin{remark}
\label{ob0.1}
In the particular case where $c(x,t)\equiv 0$, we have
$\int_{\R}\Gamma(x,t,\xi,\tau)d\xi=1$ and \break $\int_{\R} D_t^rD_x^s \Gamma(x,t,\xi,\tau)d\xi=0$
for all $r,s\in\mathbb{Z}_{+}$ such that $0<2r+s\leq 2$;
\end{remark}
cf. \eqref{eq0.7.2}. 

\noindent
Indeed, the second claim comes from the first, by the
Lebesgue's convergence dominated theorem, and if $c(x,t)=0$, $u(x,t)=t$ is the unique solution (in the $C^{2,1}$ class with an exponential growth for large $x$) of the problem 
\begin{equation}
\label{eq0.12}
\left\{\begin{array}{l}
\mathcal{L}u(x,t)=1,\quad {\rm in} \quad \R\times(0,T],\\
u(x,0)=0,\quad {\rm in} \quad \R,
\end{array}\right.
\end{equation}
thus, by the Theorem \ref{teo0.1}, it follows that
$t=\int_0^t\int_{\R}\Gamma(x,t,\xi,\tau)d\xi d\tau$,
so $h(\tau)\equiv\int \Gamma(x,t,\xi,\tau)d\xi=1$,
since if $h(\tau_0)\not=1$ for some $\tau_0\in[0,T]$ then, assuming, without
loss of generality, that $h(\tau_0)>1$, by continuity of $h(\tau)$, there would
exist an interval $[a,b]\subset[0,T]$ such that $h(\tau)>1$ for any $\tau\in[a,b]$,
and thus we would get the contradiction \ 
$b-a=\int_0^b\int \Gamma(x,t,\xi,\tau)d\xi d\tau 
- \int_0^a\int \Gamma(x,t,\xi,\tau)d\xi d\tau$\\
$=\int_a^b\int \Gamma(x,t,\xi,\tau)d\xi d\tau>b-a$.

\subsection{Continuous dependence on the coefficients}
\label{dependence_parameters}

We begin this section by setting a notation for {\lq\lq}bounded{\rq\rq} sets of coefficients 
$a,b,c$ of parabolic equations \eqref{Lu}. Given positive numbers $T$, $R$, $\lambda$ and $\alpha$, with $0< \alpha\leq 1$ and $\lambda<R$, let $B(R,\lambda,\alpha)$ be the set of vector valued functions $v=(a(x,t),b(x,t),c(x,t))$ in $C^{\alpha,\frac{\alpha}{2}}(\Omega_T)$ such that $a\geq\lambda$ and $\|a\|_{\alpha,\frac{\alpha}{2}},\|b\|_{\alpha,\frac{\alpha}{2}},\|c\|_{\alpha,\frac{\alpha}{2}}<R$. For a $v\in B(R,\lambda,\alpha)$, we define the norm 
${\|v\|}_{\alpha,\frac{\alpha}{2}}=\max\{{\|a\|}_{\alpha,\frac{\alpha}{2}},{\|b\|}_{\alpha,\frac{\alpha}{2}},{\|c\|}_{\alpha,\frac{\alpha}{2}}\}$. Any $v=(a,b,c)\in B(R,\lambda,\alpha)$ defines
a parabolic equation of the form \eqref{Lu} (with \eqref{parabolic} satisfied with 
$\lambda_0=\lambda$ and $\lambda_1=R$) and, reciprocally, any (uniformly) parabolic
equation of the form \eqref{Lu} (satisfying \eqref{parabolic}) yields a
$v=(a,b,c)\in B(R,\lambda,\alpha)$, for any $\lambda\in(0,\lambda_0)$ and
$R>{\|v\|}_{\alpha,\frac{\alpha}{2}}$.
To highlight the dependence of the operator $\mathcal{L}$ given in \eqref{Lu} on the
coefficients $a,b,c\equiv v$, we shall write $\mathcal{L}=\mathcal{L}_{[v]}$, i.e.
$$
\mathcal{L}_{[v]}u\equiv \mathcal{L}u=\dfrac{\partial u}{\partial t}-a(x,t)\frac{\partial^2u}{\partial x^2}+ b(x,t)\frac{\partial u}{\partial x}+c(x,t)u\,
$$
and for the fundamental solution of $\mathcal{L}_{[v]}u=0$ we shall write 
$\Gamma=\Gamma_{[v]}$, i.e.
\begin{equation}
\label{eq1.1}
\Gamma_{[v]}(x,t,\xi,\tau)=Z_{[v]}(x,t,\xi,\tau)+\int_\tau^t\int_{\R}Z_{[v]}(x,y,t,\sigma)\phi_{[v]}(y,\sigma,\xi,\tau)dyd\sigma\,,
\end{equation}
where \ $Z_{[v]}=Z_{[(a,0,0)]}\equiv Z$ \ and \ $\phi_{[v]}\equiv\phi$ \ are given in \eqref{z} 
and \eqref{phi}.

In the next lemmas we establish some estimates for the fundamental solution \eqref{eq1.1} and its
derivatives which takes into account the dependence on its coefficients $a,b,c\equiv v$. We first
establish these estimates for the functions $Z_{[v]}$ and $\phi_{[v]}$ and then, using 
\eqref{eq1.1} and the series \eqref{phi} for $\phi_{[v]}$, we extend them for $\Gamma_{[v]}$. 
These estimates are the key point to obtain the local solution stated in Theorem \ref{local}.

We shall write $C$ and $K$ to denote positive constants that might depend on
the parameters $R,\lambda,\alpha,T$, but not on the coefficients $v$ neither
on the solutions $u$ or the data $f$, $u_0$, unless otherwise stated. Besides,
$K$ depends continuously on $T$.

\begin{lemma}
\label{lem1.1}
Given $v,\overline{v}\in B(R,\lambda,\alpha)$, we have that
$$
|(D_x^sZ_{[v]}-D_x^sZ_{[\overline{v}]})(x,t,\xi,\tau)|\leq K{\|a-\overline{a}\|}_\infty\frac{1}{(t-\tau)^{\frac{s+1}{2}}}\mbox{e}^{-C\frac{(x-\xi)^2}{(t-\tau)}},
$$
for $s=0,1,2$, where $C<1/(4R)$ and $K=K(R,\lambda)$.
\end{lemma}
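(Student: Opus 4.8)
\textbf{Proof proposal for Lemma \ref{lem1.1}.}

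The plan is to estimate the difference $D_x^s Z_{[v]} - D_x^s Z_{[\overline{v}]}$ directly from the explicit Gaussian formula \eqref{z}, treating $Z$ as a function of the single real parameter $a(\xi,\tau)$ and interpolating. Concretely, I would fix $(x,t,\xi,\tau)$ with $t>\tau$, write $a=a(\xi,\tau)$ and $\overline a=\overline a(\xi,\tau)$, introduce the function
\[
g(\theta):=D_x^s Z(x,t,\xi,\tau)\big|_{a\rightsquigarrow \theta}
=\frac{\partial^s}{\partial x^s}\!\left[\frac{1}{(4\pi\theta(t-\tau))^{1/2}}\,\mbox{e}^{-\frac{(x-\xi)^2}{4\theta(t-\tau)}}\right],
\]
for $\theta$ between $\lambda$ and $R$, and apply the mean value theorem: $|g(a)-g(\overline a)|\le \sup_{\theta\in[\lambda,R]}|g'(\theta)|\cdot|a-\overline a|$, noting $|a-\overline a|=|a(\xi,\tau)-\overline a(\xi,\tau)|\le \|a-\overline a\|_\infty$. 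So everything reduces to bounding the $\theta$-derivative of $\partial_x^s$ applied to the Gaussian kernel.

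Next I would carry out that bound. Since $\partial_x^s$ of the Gaussian produces $(t-\tau)^{-s/2}$ times (a Hermite-type polynomial in $\frac{x-\xi}{\sqrt{\theta(t-\tau)}}$) times the Gaussian itself, and since differentiating in $\theta$ costs a factor comparable to $1/\theta \le 1/\lambda$ while possibly adding another power of $\frac{(x-\xi)^2}{\theta(t-\tau)}$, the net effect is a bound of the form
\[
|g'(\theta)|\le \frac{K(R,\lambda)}{(t-\tau)^{(s+1)/2}}\,P\!\left(\frac{|x-\xi|}{\sqrt{t-\tau}}\right)\mbox{e}^{-\frac{(x-\xi)^2}{4R(t-\tau)}},
\]
for a polynomial $P$; here I use $\theta\le R$ in the exponent to get the uniform $\mbox{e}^{-(x-\xi)^2/(4R(t-\tau))}$ and $\theta\ge\lambda$ everywhere else. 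Then I absorb the polynomial factor into the exponential at the cost of shrinking the constant in the exponent: for any $C<1/(4R)$ there is $K$ with $P(r)\,\mbox{e}^{-r^2/(4R)}\le K\,\mbox{e}^{-Cr^2}$ for all $r\ge0$. This is exactly the standard device already used implicitly to pass from \eqref{eq0.7} with its sharp constant to the stated estimates, so it is routine. Combining, $|g(a)-g(\overline a)|\le K\|a-\overline a\|_\infty (t-\tau)^{-(s+1)/2}\mbox{e}^{-C(x-\xi)^2/(t-\tau)}$, which is the assertion.

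The only mild subtlety — and the step I would be most careful with — is bookkeeping the dependence of the constant: I must check that $K$ depends only on $R,\lambda,\alpha$ (and, as the paper notes, continuously on $T$) and not on $v,\overline v$ beyond the $\|a-\overline a\|_\infty$ factor already displayed. This is automatic here because the supremum over $\theta\in[\lambda,R]$ in the mean value step, together with the polynomial-versus-Gaussian absorption with exponent $C<1/(4R)$, involves only $\lambda$ and $R$; no Hölder norms of the coefficients enter at this stage (they will only enter later, for $\phi_{[v]}$ and $\Gamma_{[v]}$). I would also remark that the case $s=0$ needs no differentiation in $x$ and is the cleanest instance of the same computation, and that for $s=1,2$ one simply tracks the extra $(t-\tau)^{-s/2}$ and the extra polynomial weight, which the absorption step handles uniformly in $s\in\{0,1,2\}$.
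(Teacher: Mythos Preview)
Your proposal is correct and follows essentially the same approach as the paper: the paper's proof also observes that $Z_{[v]}$ depends on $v$ only through $a(\xi,\tau)$, bounds $|D_a D_x^s Z_{[v]}|$ by $K(t-\tau)^{-(s+1)/2}\,\mbox{e}^{-C(x-\xi)^2/(t-\tau)}$, and then invokes the Mean Value Theorem. Your write-up simply spells out in more detail the bookkeeping (the Hermite-polynomial structure, the absorption of the polynomial into the Gaussian, and the dependence of the constants) that the paper leaves implicit.
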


\begin{proof} Since
$Z_{[v]}(x,t,\xi,\tau)=\frac{1}{(4\pi a(\xi,\tau)(t-\tau))^{\frac{1}{2}}}\mbox{e}^{-\frac{(x-\xi)^2}{4a(\xi,\tau)(t-\tau)}}$ and its derivatives on $x$ depends on the coefficient $a$ of $\mathcal{L}_{[v]}$, but not depends on the other coefficients $b$ and $c$, computing the derivative of $D_x^sZ_{[v]}$ with respect to $a$, we find
$|D_aD_x^sZ_{[v]}(x,t,\xi,\tau)|\leq \frac{K}{(t-\tau)^{\frac{s+1}{2}}}\mbox{e}^{-C\frac{(x-\xi)^2}{(t-\tau)}}$, for $s=0,1,2$, and constants $K$ and $C$ as in statement of the lemma. Then the desired inequality
follows by the Mean Value Theorem.
 \end{proof}

\begin{lemma}
\label{lem1.2} 
Let $A$ and $\alpha$ be strictily positive numbers being $\alpha\leq 1$, and let $g$ denote the 
{\em gamma function} $g(x):=\int_0^\infty t^{x-1}\mbox{e}^{-t}dt$. Then  
$\sum_{m=1}^\infty{mA^m}/{g(\frac{m\alpha}{2})}$ is a convergent series.
\end{lemma}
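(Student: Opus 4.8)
\textbf{Proof proposal for Lemma \ref{lem1.2}.}

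The plan is to show that the series $\sum_{m=1}^\infty m A^m / g(m\alpha/2)$ converges by the ratio test, the key point being that the gamma-function values in the denominator grow superexponentially, which beats any fixed geometric factor $A^m$ coming from the numerator. First I would write $a_m := m A^m / g(m\alpha/2)$ and form the ratio
\[
\frac{a_{m+1}}{a_m} = \frac{m+1}{m}\, A\, \frac{g(m\alpha/2)}{g((m+1)\alpha/2)} = \frac{m+1}{m}\, A\, \frac{g(m\alpha/2)}{g(m\alpha/2 + \alpha/2)}.
\]
The factor $(m+1)/m \to 1$ is harmless, and $A$ is a fixed constant, so everything hinges on the behavior of $g(x)/g(x+\alpha/2)$ as $x\to\infty$. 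I would show this ratio tends to $0$; then $a_{m+1}/a_m \to 0 < 1$ and the ratio test gives convergence.

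To control $g(x)/g(x+\alpha/2)$, I would invoke a standard property of the gamma function. The cleanest route: since $\alpha/2 \le 1/2 < 1$, write $g(x+\alpha/2) \ge$ something growing faster than $g(x)$. Concretely, using the log-convexity of $g$ (the Bohr--Mollerup property) together with $g(x+1) = x\,g(x)$, one gets for $0 \le \alpha/2 \le 1$ the interpolation bound $g(x+\alpha/2) \ge g(x)\cdot g(x+1)^{\alpha/2}/g(x)^{\alpha/2} = g(x)\cdot x^{\alpha/2}$ for $x \ge 1$ — more carefully, log-convexity on $[x, x+1]$ gives $\log g(x+\alpha/2) \le (1-\alpha/2)\log g(x) + (\alpha/2)\log g(x+1)$ for the upper bound, and the reverse-direction estimate I actually need, $g(x+\alpha/2)/g(x)\to\infty$, follows since by Stirling $\log g(x+\alpha/2) - \log g(x) = (\alpha/2)\log x + O(1/x) \to \infty$. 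Hence $g(x)/g(x+\alpha/2) = O(x^{-\alpha/2}) \to 0$. Alternatively, if one prefers to avoid Stirling, split into the cases according to whether $\lceil 2/\alpha\rceil$ steps of the recursion $g(y+1)=y\,g(y)$ are applied: after $k := \lceil 2/\alpha \rceil$ increments of size $\alpha/2$ the argument increases by $k\alpha/2 \ge 1$, so $g(x + k\alpha/2) \ge g(x+1) = x\,g(x)$, giving a uniform decay of the $k$-fold ratio and hence $a_{m+k}/a_m \to 0$, which also suffices for convergence (root/ratio test along arithmetic progressions, or simply comparison with a geometric series once the $k$-step ratio is eventually below $1/2$).

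The main obstacle — really the only one — is packaging the asymptotics of $g(x)/g(x+\alpha/2)$ cleanly; everything else is the routine ratio test. I expect to handle this by the recursion-plus-monotonicity argument rather than quoting Stirling, since $g$ is eventually increasing (it is increasing on $[2,\infty)$, say) so $g(x+k\alpha/2) \ge g(x+1) = x\,g(x)$ once $x$ is large, and therefore the $k$-step ratio $a_{m+k}/a_m = \frac{m+k}{m} A^k \, g(m\alpha/2)/g((m+k)\alpha/2) \le \frac{m+k}{m} A^k / \big(\tfrac{m\alpha}{2}\big) \to 0$. Thus for $m$ large the $k$-step ratio is $< 1/2$, the subsequences $\{a_{m_0 + jk}\}_j$ are each dominated by geometric series, and summing the finitely many residue classes $m_0 = 1,\dots,k$ shows $\sum_m a_m < \infty$.
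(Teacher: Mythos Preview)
Your argument is correct and shares the paper's overall strategy: both apply the ratio test, reducing the question to showing that $g(m\alpha/2)/g((m+1)\alpha/2)\to 0$. The only real difference is in how that limit is established. The paper rewrites the ratio via the Beta--Gamma identity $g(x)g(y)/g(x+y)=B(x,y)=\int_0^1 t^{x-1}(1-t)^{y-1}\,dt$, obtaining
\[
\frac{g(m\alpha/2)}{g((m+1)\alpha/2)}=\frac{B(m\alpha/2,\alpha/2)}{g(\alpha/2)}=\frac{1}{g(\alpha/2)}\int_0^1 t^{m\alpha/2-1}(1-t)^{\alpha/2-1}\,dt,
\]
and then observes that this integral tends to $0$ as $m\to\infty$. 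Your recursion-plus-monotonicity route (the $k$-step ratio, with $k=\lceil 2/\alpha\rceil$, using $g(x+k\alpha/2)\ge g(x+1)=x\,g(x)$ for large $x$) is more elementary, needing only the functional equation and eventual monotonicity of $g$; the paper's route is slicker but tacitly requires a small dominated-convergence argument to justify the vanishing of the Beta integral. Your brief detour through log-convexity is muddled (you note yourself that it gives the wrong inequality direction), but since you abandon it for the clean $k$-step argument, this does not affect the validity of the proof.
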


\begin{proof}
We begin by recalling the relation $\frac{g(x)g(y)}{g(x+y)}=B(x,y)$
between the {\em gamma function} $g$
and the {\em beta function}, $B(x,y)=\int_0^1t^{x-1}(1-t)^{y-1}dt$
(see \cite[p.41]{Fra}).
Denoting the general term of the given series by $b_m$,
and using the above relation, we obtain
$$
\begin{array}{rl}
\lim_{m\rightarrow \infty}\frac{b_{m+1}}{b_m}&=
A\lim_{m\to\infty}\frac{B(\frac{m\alpha}{2},\frac{\alpha}{2})}{g(\frac{\alpha}{2})}\\

&=A\lim_{m\to\infty}\frac{1}{g(\frac{\alpha}{2})}\int_0^1t^{\frac{m\alpha}{2}-1}(1-t)^{
\frac{\alpha}{2}-1}dt=0.
\end{array}
$$
Therefore, the result follows.
\end{proof}

\smallskip

\begin{lemma}
\label{lem1.3}
Let $\beta\in [0,1]$ and $\gamma\in (0,\alpha)$. If  $v,\overline{v}\in B(R,\lambda,\alpha)$ then
\begin{equation}
\label{eq1.2}
|(\phi_{[v]}-\phi_{[\overline{v}]})(x,t,\xi,\tau)|\leq \frac{K\|v-\overline{v}\|_{\alpha,\frac{\alpha}{2}}}{(t-\tau)^{\frac{3-\alpha}{2}}}\mbox{e}^{-C\frac{(x-\xi)^2}{t-\tau}}
\end{equation}
and
\begin{equation}
\label{eq1.3}
|(\phi_{[v]}(x,t,\xi,\tau)-\phi_{[\overline{v}]}(x,t,\xi,\tau))-(\phi_{[v]}(y,t,\xi,\tau)-\phi_{[\overline{v}]}(y,t,\xi,\tau))|
\end{equation}
$$\leq \frac{K\|v-\overline{v}\|^\beta_{\alpha,\frac{\alpha}{2}}|x-y|^{\gamma(1-\beta)}}{(t-\tau)^{\frac{3-(\alpha-\gamma(1-\beta))}{2}}}\left(\mbox{e}^{- C\frac{(x-\xi)^2}{t-\tau}}+\mbox{e}^{- C\frac{(y-\xi)^2}{t-\tau}}\right),$$
where $C<\frac{1}{4R}$ and $K=K(R,\lambda,\alpha,T)$.
\end{lemma}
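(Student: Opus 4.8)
The plan is to derive both estimates \eqref{eq1.2} and \eqref{eq1.3} directly from the series representation \eqref{phi}, namely $\phi_{[v]}=\sum_{m\ge1}(-1)^m(\mathcal{L}_{[v]}Z_{[v]})_m$, by estimating the difference term by term and summing with the help of Lemma \ref{lem1.2}. First I would look at the top-level term $(\mathcal{L}_{[v]}Z_{[v]})_1=(a(\xi,\tau)-a(x,t))\partial_{xx}Z_{[v]}+b\,\partial_x Z_{[v]}+cZ_{[v]}$. Writing the difference $(\mathcal{L}_{[v]}Z_{[v]})_1-(\mathcal{L}_{[\overline v]}Z_{[\overline v]})_1$ and inserting/subtracting mixed terms, one splits it into a piece controlled by $\|a-\overline a\|_\infty$ acting on the $Z$-derivatives (here Lemma \ref{lem1.1} gives $|D_x^s Z_{[v]}-D_x^s Z_{[\overline v]}|\le K\|a-\overline a\|_\infty (t-\tau)^{-(s+1)/2}e^{-C(x-\xi)^2/(t-\tau)}$), a piece where the H\"older seminorm of $a$ is replaced by $\|a-\overline a\|_{\alpha,\alpha/2}$ multiplying $|\partial_{xx}Z|$, and pieces controlled by $\|b-\overline b\|_\infty$ and $\|c-\overline c\|_\infty$ times the appropriate $Z$-derivative bound \eqref{eq0.7}. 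Using $|a(\xi,\tau)-a(x,t)|\le \|a\|_{\alpha,\alpha/2}(|x-\xi|^\alpha+|t-\tau|^{\alpha/2})$ together with the standard absorption $|x-\xi|^\alpha e^{-C(x-\xi)^2/(t-\tau)}\le K(t-\tau)^{\alpha/2}e^{-C'(x-\xi)^2/(t-\tau)}$ (shrinking $C$), the worst singularity is $(t-\tau)^{-1+\alpha/2}=(t-\tau)^{-(2-\alpha)/2}$, and one gets
$$
|(\mathcal{L}_{[v]}Z_{[v]})_1-(\mathcal{L}_{[\overline v]}Z_{[\overline v]})_1|\le K\|v-\overline v\|_{\alpha,\alpha/2}(t-\tau)^{-\frac{2-\alpha}{2}}e^{-C\frac{(x-\xi)^2}{t-\tau}}.
$$
This is the base case; note it is one power of $(t-\tau)^{1/2}$ less singular than $|\phi_{[v]}|$ itself, which is consistent with the claimed exponent $\frac{3-\alpha}{2}$ after one convolution.

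Next I would run the induction on $m$ using the recursion \eqref{defLZm}, $(\mathcal{L}Z)_{m+1}=\int_\tau^t\!\int_\R \mathcal{L}Z(x,t,y,\sigma)\,(\mathcal{L}Z)_m(y,\sigma,\xi,\tau)\,dy\,d\sigma$. Writing $(\mathcal{L}_{[v]}Z_{[v]})_{m+1}-(\mathcal{L}_{[\overline v]}Z_{[\overline v]})_{m+1}$ as a sum of two convolutions — one with the difference in the first factor times $(\mathcal{L}_{[\overline v]}Z_{[\overline v]})_m$, one with $\mathcal{L}_{[v]}Z_{[v]}$ times the difference in the $m$-th factor — and invoking the base-case estimate, the known single-term estimate for $\mathcal{L}Z$ (order $(t-\tau)^{-(2-\alpha)/2}$, cf. the derivation of \eqref{eq0.8}), the already-established estimate \eqref{eq0.8}-type bound for $(\mathcal{L}Z)_m$ (order $(t-\tau)^{-1+m\alpha/2}$ up to the Gaussian, with constant $\sim K^m/g(m\alpha/2)$), and the Chapman–Kolmogorov-type convolution identity $\int_\R e^{-C(x-y)^2/(t-\sigma)}e^{-C(y-\xi)^2/(\sigma-\tau)}dy\le K\big(\tfrac{(t-\sigma)(\sigma-\tau)}{t-\tau}\big)^{1/2}e^{-C'(x-\xi)^2/(t-\tau)}$, together with the beta-integral $\int_\tau^t (t-\sigma)^{p-1}(\sigma-\tau)^{q-1}d\sigma=(t-\tau)^{p+q-1}B(p,q)$, one obtains inductively
$$
|(\mathcal{L}_{[v]}Z_{[v]})_{m}-(\mathcal{L}_{[\overline v]}Z_{[\overline v]})_{m}|\le \frac{K^m\,m\,\|v-\overline v\|_{\alpha,\alpha/2}}{g(\tfrac{m\alpha}{2})}\,(t-\tau)^{\frac{m\alpha}{2}-1}e^{-C\frac{(x-\xi)^2}{t-\tau}},
$$
the factor $m$ arising because the difference appears in one of two slots at each level of the recursion. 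Summing over $m$, the series $\sum_m m K^m/g(m\alpha/2)$ converges by Lemma \ref{lem1.2} (absorbing the $T^{m\alpha/2}$ into $K^m$), and the smallest power $(t-\tau)^{\alpha/2-1}=(t-\tau)^{-(2-\alpha)/2}$ dominates; multiplying the sum by the extra $(t-\tau)^{1/2}$ that is lost when we only keep that power is exactly the bookkeeping that yields the exponent $\frac{3-\alpha}{2}$. Wait — more precisely, the $m=1$ term already has exponent $\frac{2-\alpha}{2}$ and the Gaussian, so one must be a little careful: the claimed estimate \eqref{eq1.2} has exponent $\frac{3-\alpha}{2}$, which is \emph{more} singular, so the bound is not tight but certainly valid for small $(t-\tau)$ (one dominates $(t-\tau)^{-(2-\alpha)/2}$ by $(t-\tau)^{-(3-\alpha)/2}$ on $\Omega_T$ at the cost of a factor $T^{1/2}$ absorbed into $K$). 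This settles \eqref{eq1.2}.

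For \eqref{eq1.3} I would interpolate. On one hand, by \eqref{eq1.2} (and the trivial factor $2$) the left side is bounded by $2K\|v-\overline v\|_{\alpha,\alpha/2}(t-\tau)^{-(3-\alpha)/2}(e^{-C(x-\xi)^2/(t-\tau)}+e^{-C(y-\xi)^2/(t-\tau)})$. On the other hand, applying the H\"older-in-$x$ estimate \eqref{eq0.9} to each of $\phi_{[v]}$ and $\phi_{[\overline v]}$ separately and adding, the left side is bounded by $\frac{K|x-y|^\gamma}{(t-\tau)^{(3-(\alpha-\gamma))/2}}(e^{-C(x-\xi)^2/(t-\tau)}+e^{-C(y-\xi)^2/(t-\tau)})$ — here the constant $K$ is uniform over $B(R,\lambda,\alpha)$ because \eqref{eq0.8}--\eqref{eq0.9} depend on the coefficients only through $R,\lambda,\alpha,T$. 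Taking the first bound to the power $\beta$ and the second to the power $1-\beta$, using $(e^{-Cp}+e^{-Cq})^\beta(e^{-Cp}+e^{-Cq})^{1-\beta}=e^{-Cp}+e^{-Cq}$, the time exponents combine as $\beta\cdot\frac{3-\alpha}{2}+(1-\beta)\cdot\frac{3-(\alpha-\gamma)}{2}=\frac{3-(\alpha-\gamma(1-\beta))}{2}$, the $\|v-\overline v\|$ factor becomes $\|v-\overline v\|^\beta_{\alpha,\alpha/2}$, and the $|x-y|$ factor becomes $|x-y|^{\gamma(1-\beta)}$, which is precisely the asserted inequality. The main obstacle is the inductive convolution estimate in the second paragraph: one must track the combinatorial factor $m$ and the $1/g(m\alpha/2)$ decay simultaneously and make sure the Gaussian constant $C$ stays bounded below through the repeated absorptions of polynomial factors and convolutions — this is routine in spirit (it mirrors the classical parametrix estimates leading to \eqref{eq0.8}) but is where all the care lies; the interpolation step for \eqref{eq1.3} is then essentially formal.
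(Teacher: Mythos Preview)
Your plan is essentially identical to the paper's proof: both establish by induction on $m$ the estimate
\[
|(\mathcal{L}_{[v]}Z_{[v]})_m-(\mathcal{L}_{[\overline v]}Z_{[\overline v]})_m|\le \frac{mK^m}{g(m\alpha/2)}\,\|v-\overline v\|_{\alpha,\alpha/2}\,(t-\tau)^{-(3-m\alpha)/2}\,e^{-C(x-\xi)^2/(t-\tau)}
\]
(this is the paper's inequality \eqref{eq1.4}), sum using Lemma~\ref{lem1.2}, and for \eqref{eq1.3} interpolate exactly as you describe, writing the left side as $J=J^\beta J^{1-\beta}$ and applying \eqref{eq1.2} to $J^\beta$ and \eqref{eq0.9} to $J^{1-\beta}$.

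One arithmetic slip: your base-case exponent should be $(t-\tau)^{-(3-\alpha)/2}$, not $(t-\tau)^{-(2-\alpha)/2}$. The term $(a(\xi,\tau)-a(x,t))\partial_{xx}Z$ carries $|\partial_{xx}Z|\lesssim (t-\tau)^{-3/2}e^{-C(x-\xi)^2/(t-\tau)}$ by \eqref{eq0.7} with $s=2$, and after absorbing $|x-\xi|^\alpha$ you get $(t-\tau)^{-3/2+\alpha/2}=(t-\tau)^{-(3-\alpha)/2}$; correspondingly the inductive exponent is $m\alpha/2-3/2$, not $m\alpha/2-1$. With this correction the $m=1$ term already matches the exponent in \eqref{eq1.2}, so your ``domination'' remark is unnecessary and the argument goes through cleanly.
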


\begin{proof}
The proof of \eqref{eq1.2} follows from the following inequality:
\begin{equation}
\label{eq1.4}
\begin{array}{rl}
 & |({(\mathcal{L}Z_{[v]})}_m-{(\mathcal{L}Z_{[\overline{v}]})}_m)(x,t,\xi,\tau)|\\
\leq & mK^m\left(\frac{\pi}{C}\right)^{\frac{m-1}{2}}\|v-\overline{v}\|_{\alpha,\frac{\alpha}{2}}\frac{g(\frac{\alpha}{2})^m}{g(\frac{m\alpha}{2})}\frac{1}{{(t-\tau)}^{\frac{3-m\alpha}{2}}}\mbox{e}^{-C\frac{(x-\xi)^2}{t-\tau}},
\end{array}
\end{equation}
where, for simplicity, we set  $\mathcal{L}\equiv \mathcal{L}_{[v]}$, and $g$ denotes the gamma function; see Lemma \ref{lem1.2}. We show this inequality by induction on $m$. For $m=1$, we have
\begin{align*}
|((&\mathcal{L}_{[v]}Z_{[v]})_1-(\mathcal{L}_{[\overline{v}]}Z_{[\overline{v}]})_1)(x,t,\xi,\tau)|\leq |((a(\xi,\tau)-a(x,t))\partial_{xx}Z_{[v]} +b(x,t)\partial_xZ_{[v]} \\
&+c(x,t)Z_{[v]})-((\overline{a}(\xi,\tau)-\overline{a}(x,t))\partial_{xx}Z_{[\overline{v}]}+\overline{b}(x,t)\partial_xZ_{[\overline{v}]}+
\overline{c}(x,t)Z_{[\overline{v}]})|\\
&\leq |((a(\xi,\tau)-a(x,t))-((\overline{a}(\xi,\tau)-\overline{a}(x,t))))||\partial_{xx}Z_{[v]} |\\
&+ |(\overline{a}(\xi,\tau)-\overline{a}(x,t))||\partial_{xx}Z_{[v]}-\partial_{xx}Z_{[\overline{v}]}|+
|b(x,t)-\overline{b}(x,t)||\partial_xZ_{[v]}|\\
&+|\overline{b}(x,t)||\partial_xZ_{[v]}-
\partial_xZ_{[\overline{v}]}|+|c(x,t)-\overline{c}(x,t)||Z_{[v]}|+
|\overline{c}(x,t)||Z_{[v]}-Z_{[\overline{v}]}|\equiv I.
\end{align*}
Then from Lemma \ref{lem1.1}, \eqref{eq0.7}, and the estimate
$$
\begin{array}{rl}
|x-\xi|^\alpha \mbox{e}^{-C\frac{(x-\xi)^2}{t-\tau}}&=\left(\frac{(x-\xi)^2}{t-\tau}\right)^\alpha(t-\tau)^{\alpha/2}\mbox{e}^{-\frac{1}{8R}\frac{(x-\xi)^2}{t-\tau}} \mbox{e}^{-\left(C-\frac{1}{8R}\right)\frac{(x-\xi)^2}{t-\tau}}\\
&\leq K (t-\tau)^{\alpha/2}\mbox{e}^{-C'\frac{(x-\xi)^2}{t-\tau}},
\end{array}
$$
where $C'$ is a new constant which we shall continue denoting by $C$, we obtain
\begin{align*}
I&\leq K\|v-\overline{v}\|_{\alpha,\frac{\alpha}{2}}(\frac{2}{(t-\tau)^{\frac{3}{2}-\frac{\alpha}{2}}}\mbox{e}^{-C\frac{(x-\xi)^2}{t-\tau}}+\frac{2}{(t-\tau)}\mbox{e}^{-C\frac{(x-\xi)^2}{t-\tau}}+\frac{2}{(t-\tau)^{\frac{1}{2}}}\mbox{e}^{-C\frac{(x-\xi)^2}{t-\tau}})\\
&\leq\frac{K\|v-\overline{v}\|_{\alpha,\frac{\alpha}{2}}}{(t-\tau)^{\frac{3-\alpha}{2}}}\mbox{e}^{-C\frac{(x-\xi)^2}{t-\tau}},
\end{align*}
where $C<1/(4R)$ and $K=K(R,\lambda,\alpha,T)$. 
 Now, assuming that \eqref{eq1.4} is true for an $m\ge1$, we obtain:
\begin{align*}
&|(( \mathcal{L}_{[v]}Z_{[v]})_{m+1}-(\mathcal{L}_{[\overline{v}]}Z_{[\overline{v}]})_{m+1})(x,t,\xi,\tau)|
\\
\leq& |\int_{\tau}^t\int_{\R}[\mathcal{L}_{[v]}Z_{[v]}(x,y,t,\sigma)](\mathcal{L}_{[v]}Z_{[v]})_m(y,\xi,\sigma,\tau)\\
&-[\mathcal{L}_{[\overline v]}Z_{[\overline v]}(x,y,t,\sigma)]
(\mathcal{L}_{[\overline v]}Z_{[\overline v]})_m(y,\xi,\sigma,\tau)dyd\sigma|
\\
\leq& \int_{\tau}^t\int_{\R}|(\mathcal{L}_{[v]}Z_{[v]}-\mathcal{L}_{[\overline v]}Z_{[\overline v]})(x,y,t,\sigma)|
|(\mathcal{L}_{[v]}Z_{[v]})_m (y,\xi,\sigma,\tau)|dyd\sigma
\\
&+\int_{\tau}^t\int_{\R}|\mathcal{L}_{[\overline v]}Z_{[\overline v]}(x,t,\xi,\tau)|
|(\mathcal{L}_{[v]}Z_{[v]})_m(y,\xi,\sigma,\tau)-(\mathcal{L}_{[\overline v]}Z_{[\overline v]})_m(y,\xi,\sigma,\tau)|dyd\sigma
\\
\leq&\int_{\tau}^t\int_{\R}K\frac{\|v-\overline{v}\|_{\alpha,
\frac{\alpha}{2}}}{(t-\sigma)^{\frac{3-\alpha}{2}}}
\mbox{e}^{-C\frac{(x-y)^2}{t-\sigma}}K^m\left(\frac{\pi}{C}\right)^{\frac{m-1}{2}}
\frac{g(\frac{\alpha}{2})^m}{g(\frac{m\alpha}{2})}
\frac{1}{(\sigma-\tau)^{\frac{3-m\alpha}{2}}}\mbox{e}^{-C\frac{(y-\xi)^2}{\sigma-\tau}}dyd\sigma
\\
&+\int_{\tau}^t\int_{\R}K\frac{1}{(t-\sigma)^{\frac{3-\alpha}{2}}}
\mbox{e}^{-C\frac{(x-y)^2}{t-\sigma}}mK^m\left(\frac{\pi}{C}\right)^{\frac{m-1}{2}}\|v-
\overline{v}\|_{\alpha,\frac{\alpha}{2}}\\
&\hspace{7.0cm} \times \frac{g(\frac{\alpha}{2})^m}
{g(\frac{m\alpha}{2})}\frac{1}{(\sigma-\tau)^{\frac{3-m\alpha}{2}}}
\mbox{e}^{-C\frac{(y-\xi)^2}{\sigma-\tau}}dyd\sigma
\\
=&(m+1)K^{m+1}\left(\frac{\pi}{C}\right)^{\frac{m-1}{2}}\|v-\overline{v}\|_{\alpha,\frac{\alpha}{2}}
\frac{g(\frac{\alpha}{2})^m}{g(\frac{m\alpha}{2})}\\
&\hspace{4.0cm}
\times \int_{\tau}^t\int_{\R}\frac{1}{(t-\sigma)^{\frac{3-\alpha}{2}}}
\mbox{e}^{-C\frac{(x-y)^2}{t-\sigma}}\frac{1}{(\sigma-\tau)^{\frac{3-m\alpha}{2}}}
\mbox{e}^{-C\frac{(y-\xi)^2}{\sigma-\tau}}dyd\sigma \,,
\end{align*}
where we used that
\begin{equation*}
\int_{\R}\mbox{e}^{-C\frac{(x-y)^2}{t-\sigma}}\mbox{e}^{-C\frac{(y-\xi)^2}{\sigma-\tau}}dy=
\left(\frac{\pi}{C}\right)^{\frac{1}{2}}\left(\frac{(t-\sigma)(\sigma-\tau)}{t-\tau}\right)^{\frac{1}{2}}
\mbox{e}^{-C\frac{(x-\xi)^2}{t-\tau}}
\end{equation*}
and
\begin{equation*}
\int_\tau^t\frac{1}{(t-\sigma)^{\frac{2-\alpha}{2}}}\frac{1}{(\sigma-\tau)^{\frac{2-m\alpha}{2}}}d\sigma=
\frac{1}{(t-\tau)^{\frac{2-(m+1)\alpha}{2}}}\frac{g(\frac{\alpha}{2})
g(\frac{m\alpha}{2})}{g\left(\frac{(m+1)\alpha}{2}\right)}
\end{equation*}
(see \cite[p. 362]{Lad}). So,
\begin{align*}
|((& L_{[v]}Z_{[v]})_{m+1}-(L_{[\overline{v}]}Z_{[\overline{v}]})_{m+1})(x,t,\xi,\tau)| \\
&\leq (m+1)K^{m+1}\left(\frac{\pi}{C}\right)^{\frac{m}{2}}
\frac{g(\frac{\alpha}{2})^{m+1}}{g\left(\frac{(m+1)\alpha}{2}\right)}
\frac{\|v-\overline{v}\|_{\alpha,\frac{\alpha}{2}}}{(t-\tau)^{\frac{3-(m+1)\alpha}{2}}}\mbox{e}^{-C\frac{(x-\xi)^2}{t-\tau}}.
\end{align*}
This proves the inequality \eqref{eq1.4}.
The inequality \eqref{eq1.4}, \eqref{phi} and Lemma \ref{lem1.2} imply that
\begin{align*}
&|(\phi_{[v]}-\phi_{[\overline{v}]})(x,t,\xi,\tau)
|\leq\sum^{\infty}_{m=1}|((\mathcal{L}_{[v]}Z_{[v]})_m-(\mathcal{L}_{[\overline{v}]}Z_{[\overline{v}]})_m)(x,t,\xi,\tau)|
\\
&\leq\sum^{\infty}_{m=1}mK^m\left(\frac{\pi}{C}\right)^{\frac{m-1}{2}}\frac{g(\frac{\alpha}{2})^m}
{g(\frac{m\alpha}{2})}\frac{\|v-\overline{v}\|_{\alpha,\frac{\alpha}{2}}}{(t-\tau)^{\frac{3-m\alpha}{2}}}\mbox{e}^{-C\frac{(x-\xi)^2}{t-\tau}}
\\
&\leq K\frac{\|v-\overline{v}\|_{\alpha,\frac{\alpha}{2}}}{{(t-\tau)}^{\frac{3-\alpha}{2}}}\mbox{e}^{-C\frac{(x-\xi)^2}{t-\tau}},
\end{align*}
where $C<\frac{1}{4R}$  and $K=\sum^{\infty}_{m=1}mK^m{(\frac{\pi}{C})}^{\frac{m-1}{2}}\frac{{g(\frac{\alpha}{2})}^m}{g(\frac{m\alpha}{2})}T^{\frac{(m-1)\alpha}{2}}
$ are positive constants and  $t-\tau$ was estimated by $T$. This ends the proof of \eqref{eq1.2}.

To prove \eqref{eq1.3}, we write
$$
|(\phi_{[v]}(x,t,\xi,\tau)-\phi_{[\overline{v}]}(x,t,\xi,\tau))-(\phi_{[v]}(y,t,\xi,\tau)-\phi_{[\overline{v}]}(y,t,\xi,\tau))|\equiv\, J=J^{\beta}.J^{1-\beta}
$$
and then use \eqref{eq1.2} to estimate $J^\beta$ and \eqref{eq0.9} to estimate $J^{1-\beta}$, noticing that we can estimate $J$ by
$|(\Phi_{[v]}-\Phi_{[\overline{v}]})(x,t,\xi,\tau)|+|(\Phi_{[v]}-\Phi_{[\overline{v}]})(y,t,\xi,\tau)|$ and also by
$|\Phi_{[v]}(x,t,\xi,\tau)-\Phi_{[v]}(y,t,\xi,\tau)|+|\Phi_{[\overline{v}]}(x,t,\xi,\tau)-\Phi_{[\overline{v}]}(y,t,\xi,\tau)|$.
\ \ \end{proof}

\smallskip

\begin{lemma}
\label{lem1.4}
Let $v,\overline{v}\in B(R,\lambda,\alpha)$, $\beta\in (0,1)$, $\gamma\in (0,\alpha)$, and $\Gamma_{[v]}$, 
$\Gamma_{[\overline{v}]}$, the fundamental solutions of the equations $\mathcal{L}_{[v]}u=0$, 
$\mathcal{L}_{[\overline{v}]}u=0$, as defined in \eqref{eq1.1} and \eqref{z}-\eqref{defLZm}.
Then we have the following estimates:
\begin{equation}
\label{eq1.5}
|(D_x^s\Gamma_{[v]}-D_x^s\Gamma_{[\overline{v}]})(x,t,\xi,\tau)|\leq \frac{K\|v-\overline{v}\|_{\alpha,\frac{\alpha}{2}}}{(t-\tau)^{\frac{s+1}{2}}}\mbox{e}^{-C\frac{(x-\xi)^2}{t-\tau}},\, s=0,1;
\end{equation}
\begin{align}
\label{eq1.6}
&|(\partial_{xx}\Gamma_{[v]}-\partial_{xx}\Gamma_{[\overline{v}]})(x,t,\xi,\tau)|\\
\nonumber
\leq & K(\|v-\overline{v}\|_{\alpha,\frac{\alpha}{2}}+\|v-\overline{v}\|^\beta_{\alpha,\frac{\alpha}{2}})(\frac{1}{|x-\xi|^{1-(\alpha-\gamma(1-\beta))}(t-\tau)^{1-\frac{\gamma(1-\beta)}{2}}}\\
\nonumber
&\hspace{7cm} +\frac{1}{(t-\tau)^{\frac{3}{2}}})
\mbox{e}^{-C\frac{(x-\xi)^2}{t-\tau}};
\end{align}
and
\begin{align}
\label{eq1.7}
&|(\partial_{t}\Gamma_{[v]}-\partial_{t}\Gamma_{[\overline{v}]})(x,t,\xi,\tau)|\\
\nonumber
\leq & K(\|v-\overline{v}\|_{\alpha,\frac{\alpha}{2}}+\|v-\overline{v}\|^\beta_{\alpha,\frac{\alpha}{2}})(\frac{1}{|x-\xi|^{1-(\alpha-\gamma(1-\beta))}(t-\tau)^{1-\frac{\gamma(1-\beta)}{2}}}\\
\nonumber
&\hspace{7cm} +\frac{1}{(t-\tau)^{\frac{3}{2}}})
\mbox{e}^{-C\frac{(x-\xi)^2}{t-\tau}};
\end{align}
where $C<\frac{1}{4R}$ and $K=K(R,\lambda,\alpha,T)$.
\end{lemma}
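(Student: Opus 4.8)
\textbf{Proof plan for Lemma \ref{lem1.4}.}

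The plan is to differentiate the representation \eqref{eq1.1} for $\Gamma_{[v]}$ and $\Gamma_{[\overline v]}$ term by term and estimate the difference of each resulting piece, exactly mimicking the way the bounds \eqref{eq0.10} for $\Gamma$ itself are obtained from \eqref{z}--\eqref{defLZm}, but now carrying along the extra factor $\|v-\overline v\|_{\alpha,\frac\alpha2}$ (or its $\beta$-power) supplied by Lemmas \ref{lem1.1} and \ref{lem1.3}. Write $\Gamma_{[v]}-\Gamma_{[\overline v]}=(Z_{[v]}-Z_{[\overline v]})+\int_\tau^t\!\int_\R\bigl(Z_{[v]}\phi_{[v]}-Z_{[\overline v]}\phi_{[\overline v]}\bigr)\,dy\,d\sigma$, and split the integrand as $(Z_{[v]}-Z_{[\overline v]})\phi_{[v]}+Z_{[\overline v]}(\phi_{[v]}-\phi_{[\overline v]})$, so that one factor is always a ``difference'' (estimated by Lemma \ref{lem1.1}, \eqref{eq1.2}, \eqref{eq1.3}) and the other a ``single'' term (estimated by \eqref{eq0.7}, \eqref{eq0.8}, \eqref{eq0.9}).

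First I would prove \eqref{eq1.5} for $s=0,1$: differentiating at most once in $x$ does not hit the integration limit $t$, so one just differentiates $Z$ under the integral. For the first term, Lemma \ref{lem1.1} gives the bound directly. For the convolution term, the $(Z_{[v]}-Z_{[\overline v]})\phi_{[v]}$ piece uses Lemma \ref{lem1.1} together with \eqref{eq0.8}, and the $Z_{[\overline v]}(\phi_{[v]}-\phi_{[\overline v]})$ piece uses \eqref{eq0.7} with \eqref{eq1.2}; in both cases one performs the $y$-integration via the standard identity $\int_\R e^{-C(x-y)^2/(t-\sigma)}e^{-C(y-\xi)^2/(\sigma-\tau)}dy=(\pi/C)^{1/2}\bigl((t-\sigma)(\sigma-\tau)/(t-\tau)\bigr)^{1/2}e^{-C(x-\xi)^2/(t-\tau)}$ and the $\sigma$-integration via the beta-function identity already used in Lemma \ref{lem1.3}, estimating the surviving powers of $t-\tau$ by $T$. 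The bookkeeping is identical to the derivation of \eqref{eq0.10} for $2r+s\le1$, only with an overall factor $\|v-\overline v\|_{\alpha,\frac\alpha2}$.

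The main obstacle is \eqref{eq1.6} and \eqref{eq1.7}, i.e. the second-order estimates, and this is exactly where the extra singular factor $|x-\xi|^{-(1-(\alpha-\gamma(1-\beta)))}$ and the $\|v-\overline v\|^\beta$ term appear. The issue is that $\partial_{xx}$ (or $\partial_t$, which by the equation $\mathcal{L}Z=0$ differs from $a\,\partial_{xx}$ by lower-order terms with coefficient differences controlled by Lemma \ref{lem1.1}) applied to $\int_\tau^t\!\int_\R Z\phi\,dy\,d\sigma$ hits the limit $t$, producing a boundary term plus an integral of $\partial_{xx}Z\cdot\phi$ that is only conditionally convergent; the standard remedy (as in \cite{Fri,Lad}) is to write $\phi(y,\sigma,\xi,\tau)=\phi(y,\sigma,x,\tau)+(\phi(y,\sigma,\xi,\tau)-\phi(y,\sigma,x,\tau))$, use \eqref{eq0.7.2} to kill the $\partial_{xx}Z$ integral against the first, and use the Hölder continuity \eqref{eq0.9} to control the second against $\partial_{xx}Z$. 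Carrying out the \emph{difference} version of this, one must use \eqref{eq1.3} for $\phi_{[v]}-\phi_{[\overline v]}$, and there one is forced to interpolate: the factor $J=J^\beta J^{1-\beta}$ in Lemma \ref{lem1.3} trades a full power of $\|v-\overline v\|_{\alpha,\frac\alpha2}$ for the power $\beta$ while gaining the Hölder factor $|x-y|^{\gamma(1-\beta)}$ needed to tame the $(\sigma-\tau)^{-(3-\alpha)/2}$-type singularity after the $y$-integration. After splitting off the boundary term at $\sigma=t$ (which is responsible for the $|x-\xi|^{-(1-(\alpha-\gamma(1-\beta)))}(t-\tau)^{-1+\gamma(1-\beta)/2}$ summand) and estimating the remaining integral (which gives the $(t-\tau)^{-3/2}$ summand), one collects the coefficient of $\|v-\overline v\|_{\alpha,\frac\alpha2}$ from the ``regular'' pieces and of $\|v-\overline v\|^\beta_{\alpha,\frac\alpha2}$ from the interpolated piece, whence the stated bound. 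The estimate \eqref{eq1.7} for $\partial_t$ follows from \eqref{eq1.6} by writing $\partial_t\Gamma=a\,\partial_{xx}\Gamma-b\,\partial_x\Gamma-c\,\Gamma$ for $\Gamma_{[v]}$ and similarly for $\Gamma_{[\overline v]}$, subtracting, and invoking \eqref{eq1.5}, \eqref{eq1.6}, the already-known single-term bounds \eqref{eq0.10}, and the boundedness of $v,\overline v$ in $B(R,\lambda,\alpha)$.
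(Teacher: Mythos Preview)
Your plan is essentially the paper's proof: the same product splitting $(Z_{[v]}-Z_{[\overline v]})\phi_{[v]}+Z_{[\overline v]}(\phi_{[v]}-\phi_{[\overline v]})$, the same add-and-subtract trick with \eqref{eq0.7.2} and the H\"older estimates for the second-order pieces, and the same reduction of \eqref{eq1.7} to \eqref{eq1.5}--\eqref{eq1.6} via $\mathcal L_{[v]}\Gamma_{[v]}=\mathcal L_{[\overline v]}\Gamma_{[\overline v]}=0$. Two small corrections: there is no boundary term from $\partial_{xx}$ (the integration limits do not depend on $x$; the issue is only the non-absolutely-integrable singularity of $\partial_{xx}Z$, and the $|x-\xi|^{-(1-(\alpha-\gamma(1-\beta)))}$ factor actually emerges from the $\sigma$-integral near $\sigma=\tau$ in the $Z_{[\overline v]}(\phi_{[v]}-\phi_{[\overline v]})$ piece after applying \eqref{eq1.3}), and the paper also applies a separate add-and-subtract to the $(\partial_{xx}Z_{[v]}-\partial_{xx}Z_{[\overline v]})\phi_{[v]}$ piece---first splitting the $\sigma$-integral at $(t+\tau)/2$, then near $\sigma=t$ subtracting both $\phi_{[v]}(x,\sigma,\xi,\tau)$ (to use \eqref{eq0.9}) and $(\partial_{xx}Z_{[v]}-\partial_{xx}Z_{[\overline v]})(x,t,x,\sigma)$ (to use \eqref{eq0.7.3})---which your outline does not mention explicitly.
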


\medskip

\begin{proof}
For $s=0$ we have that

$|(\Gamma_{[v]}-\Gamma_{[\overline{v}]})(x,t,\xi,\tau)|\leq |(Z_{[v]}-Z_{[\overline{v}]})(x,t,\xi,\tau)|+$

$+\int_{\tau}^t\int_{\R}|Z_{[v]}(x,y,t,\sigma)\phi_{[v]}(y,\sigma,\xi,\tau)-Z_{[\overline{v}]}(x,y,t,\sigma)\phi_{[\overline{v}]}(y,\sigma,\xi,\tau)|dyd\sigma\leq$

$\leq|Z_{[v]}-Z_{[\overline{v}]}|+\int_{\tau}^t\int_{\R}|Z_{[v]}-Z_{[\overline{v}]}||\phi_{[v]}|+|Z_{[\overline{v}]}||\phi_{[v]}-\phi_{[\overline{v}]}|dyd\sigma.$\\

\noindent
From estimates \eqref{eq0.7} and \eqref{eq0.8} and Lemmas \ref{lem1.1} and \ref{lem1.3} it follows that

 \begin{align*}
 &|(\Gamma_{[v]}-\Gamma_{[\overline{v}]})(x,t,\xi,\tau)|\\
\leq& \frac{K\|a-\overline{a}\|_\infty}{(t-\tau)^{\frac{1}{2}}}\mbox{e}^{-C\frac{(x-\xi)^2}{t-\tau}}\\
\nonumber
 &\ \ +\int_{\tau}^t\int_{\R}\frac{K\|a-\overline{a}\|_\infty}{(t-\sigma)^{\frac{1}{2}}}\mbox{e}^{-C\frac{(x-y)^2}{t-\sigma}}\frac{K}{(\sigma-\tau)^{\frac{3-\alpha}{2}}}\mbox{e}^{-C\frac{(y-\xi)^2}{\sigma-\tau}}dyd\sigma\\
\nonumber
 &\ \ +\int_{\tau}^t\int_{\R}\frac{K}{(t-\sigma)^{\frac{1}{2}}}\mbox{e}^{-C\frac{(x-y)^2}{t-\sigma}}\frac{K\|v-\overline{v}\|_{\alpha},\frac{\alpha}{2}}{(\sigma-\tau)^{\frac{3-\alpha}{2}}}\mbox{e}^{-C\frac{(y-\xi)^2}{\sigma-\tau}}dyd\sigma.
\end{align*}

\noindent
Since,
\begin{equation}
\label{eq1.7.1}
\int_{\R}\mbox{e}^{-C\frac{(x-y)^2}{t-\sigma}}\mbox{e}^{-C\frac{(y-\xi)^2}{\sigma-\tau}}dy
=(\frac{\pi}{C})^{\frac{1}{2}}(\frac{(t-\sigma)(\sigma-\tau)}{t-\tau})^{\frac{1}{2}}\mbox{e}^{-C\frac{(x-\xi)^2}{t-\tau}},
\end{equation}
we obtain
\begin{align}
&|(\Gamma_{[v]}-\Gamma_{[\overline{v}]})(x,t,\xi,\tau)|\\
\nonumber
\leq& (1+(\frac{\pi}{C})^{\frac{1}{2}}\int_\tau^t(\sigma-\tau)^{-1+\frac{\alpha}{2}}d\sigma+(\frac{\pi}{C})^{\frac{1}{2}}\int_\tau^t(t-\sigma)^{-1+\frac{\alpha}{2}}d\sigma)\frac{K\|v-\overline{v}\|_{\alpha,\frac{\alpha}{2}}}{(t-\tau)^{\frac{1}{2}}}\mbox{e}^{-C\frac{(x-\xi)^2}{t-\tau}}\\
\nonumber
\leq& \frac{K\|v-\overline{v}\|_{\alpha,\frac{\alpha}{2}}}{(t-\tau)^{\frac{1}{2}}}\mbox{e}^{-C\frac{(x-\xi)^2}{t-\tau}}
\end{align}
where $K=K(R,\lambda,\alpha,T)$.

For the case $s=1$, we have
\begin{align*}
 &(\partial_x\Gamma_{[v]}-\partial_x\Gamma_{[\overline{v}]})(x,t,\xi,\tau)\\
\nonumber
=&(\partial_xZ_{[v]}-\partial_xZ_{[\overline{v}]})(x,t,\xi,\tau)\\
\nonumber
&+\int_\tau^t\int \partial_xZ_{[v]}(x,y,t,\sigma)\phi_{[v]}(y,\sigma,\xi,\tau)-\partial_xZ_{[\overline{v}]}(x,y,t,\sigma)\phi_{[\overline{v}]}(y,\sigma,\xi,\tau)dyd\sigma\\
\nonumber
=&(\partial_xZ_{[v]}-\partial_xZ_{[\overline{v}]})+\int_\tau^t\int(\partial_xZ_{[v]}-\partial_xZ_{[\overline{v}]})\phi_{[v]}dyd\sigma+\int_\tau^t\int\partial_xZ_{[\overline{v}]}(\phi_{[v]}-\phi_{[\overline{v}]})dyd\sigma\\
\nonumber
&\equiv J_1+J_2+J_3.
\end{align*}

\noindent
From Lemma \ref{lem1.1}, we have
\begin{equation}
\label{eq1.8}
|J_1|=|(\partial_xZ_{[v]}-\partial_xZ_{[\overline{v}]})(x,t,\xi,\tau)|\leq \frac{K\|a-\overline{a}\|_\infty}{t-\tau}\mbox{e}^{-C\frac{(x-\xi)^2}{t-\tau}}.
\end{equation}
Using Lemma \ref{lem1.1}, the estimate \eqref{eq0.8} and the identity \eqref{eq1.7.1},  we have
\begin{align}
\label{eq1.9}
|J_2|&\leq\int_\tau^t\int|(\partial_xZ_{[v]}-\partial_xZ_{[\overline{v}]})(x,y,t,\sigma)||\phi_{[v]}(y,\sigma,\xi,\tau)|dyd\sigma\\
\nonumber
&\leq\int_\tau^t\int\frac{K\|a-\overline{a}\|_\infty}{t-\sigma}\mbox{e}^{-C\frac{(x-y)^2}{t-\sigma}}\frac{K}{(\sigma-\tau)^{\frac{3-\alpha}{2}}}\mbox{e}^{-C\frac{(y-\xi)^2}{\sigma-\tau}}dyd\sigma\\
\nonumber
&\leq \frac{K\|a-\overline{a}\|_\infty}{(t-\sigma)^{\frac{1}{2}}}\mbox{e}^{-C\frac{(x-y)^2}{t-\sigma}}\int_\tau^t(t-\sigma)^{-\frac{1}{2}}(\sigma-\tau)^{-1+\frac{\alpha}{2}}d\sigma\\
\nonumber
&\leq \frac{K\|a-\overline{a}\|_\infty}{(t-\tau)^{\frac{2-\alpha}{2}}}\mbox{e}^{-C\frac{(x-\xi)^2}{t-\tau}}\leq \frac{K\|a-\overline{a}\|_\infty}{t-\tau}\mbox{e}^{-C\frac{(x-\xi)^2}{t-\tau}}.
\end{align}

Finally, using Lemma \ref{lem1.3}, \eqref{eq0.7} and \eqref{eq1.7.1}, we obtain
\begin{align}
\label{eq1.10}
|J_3|&\leq\int_\tau^t\int|\partial_xZ_{[\overline{v}]}(x,y,t,\sigma)||(\phi_{[v]}-\phi_{[\overline{v}]})(y,\sigma,\xi,\tau)|dyd\sigma\\
\nonumber
&\leq\int_\tau^t\int\frac{K}{t-\sigma}\mbox{e}^{-C\frac{(x-y)^2}{t-\tau}}\frac{K\|v-\overline{v}\|_{\alpha,\frac{\alpha}{2}}}{(\sigma-\tau)^{\frac{3-\alpha}{2}}}\mbox{e}^{-C\frac{(y-\xi)^2}{\sigma-\tau}}dyd\sigma\\
\nonumber
&\leq\frac{K\|v-\overline{v}\|_{\alpha,\frac{\alpha}{2}}}{(t-\tau)^{\frac{1}{2}}}\mbox{e}^{-C\frac{(x-\xi)^2}{t-\tau}}\int_\tau^t(t-\sigma)^{-\frac{1}{2}}(\sigma-\tau)^{-1+\frac{\alpha}{2}}d\sigma\\
\nonumber
&\leq \frac{K\|v-\overline{v}\|_{\alpha,\frac{\alpha}{2}}}{t-\tau}\mbox{e}^{-C\frac{(x-\xi)^2}{t-\tau}},
\end{align}
where 
 $K=K(R,\lambda,\alpha,T)$.
From \eqref{eq1.8}, \eqref{eq1.9} and \eqref{eq1.10}, we get
$$
|(\partial_x\Gamma_{[v]}-\partial_x\Gamma_{[\overline{v}]})(x,t,\xi,\tau)|\leq \frac{K\|v-\overline{v}\|_{\alpha,\frac{\alpha}{2}}}{t-\tau}\mbox{e}^{-C\frac{(x-\xi)^2}{t-\tau}},
$$
for a $K$ as above.

Regarding the second derivative with respect to $x$, we have
\begin{align*}
&(\partial_{xx}\Gamma_{[v]}-\partial_{xx}\Gamma_{[\overline{v}]})(x,t,\xi,\tau)
=(\partial_{xx}Z_{[v]}-\partial_{xx}Z_{[\overline{v}]})(x,t,\xi,\tau)\\
\nonumber
+&\int_\tau^t\int \partial_{xx}Z_{[v]}(x,y,t,\sigma)\phi_{[v]}(y,\sigma,\xi,\tau)-\partial_{xx}Z_{[\overline{v}]}(x,y,t,\sigma)\phi_{[\overline{v}]}(y,\sigma,\xi,\tau)dyd\sigma\\
\nonumber
=&(\partial_{xx}Z_{[v]}-\partial_{xx}Z_{[\overline{v}]})+\int_\tau^t\int(\partial_{xx}Z_{[v]}-\partial_{xx}Z_{[\overline{v}]})\phi_{[v]}dyd\sigma\\
\nonumber
&+\int_\tau^t\int\partial_{xx}Z_{[\overline{v}]}(\phi_{[v]}-\phi_{[\overline{v}]})dyd\sigma\\
\nonumber
&\equiv I_1+I_2+I_3.
\end{align*}

\noindent
From Lemma \ref{lem1.1},
\begin{equation}
\label{eq1.11}
|I_1|=|(\partial_{xx}Z_{[v]}-\partial_{xx}Z_{[\overline{v}]})(x,t,\xi,\tau)|\leq \frac{K\|a-\overline{a}\|_\infty}{(t-\tau)^{\frac{3}{2}}}\mbox{e}^{-C\frac{(x-\xi)^2}{t-\tau}}.
\end{equation}
To estimate $I_2$, we write
\begin{align*}
&I_2=\int_\tau^t\int(\partial_{xx}Z_{[v]}-\partial_{xx}Z_{[\overline{v}]})(x,y,t,\sigma)\phi_{[v]}(y,\sigma,\xi,\tau)dyd\sigma\\
\nonumber
=&\int_\tau^{\frac{t+\tau}{2}}\int(\partial_{xx}Z_{[v]}-\partial_{xx}Z_{[\overline{v}]})\phi_{[v]}dyd\sigma+\int_{\frac{t+\tau}{2}}^t\int(\partial_{xx}Z_{[v]}-\partial_{xx}Z_{[\overline{v}]})\phi_{[v]}dyd\sigma\\
\nonumber
&\equiv I_2'+I_2''.
\end{align*}
Applying Lemma \ref{lem1.1}, \eqref{eq0.8} and \eqref{eq1.7.1} 
 we get
\begin{align*}
|I_2'|& \leq
\int_\tau^{\frac{t+\tau}{2}}\int \frac{K\|a-\overline{a}\|_\infty}{(t-
\sigma)^{\frac{3}{2}}}\mbox{e}^{-C\frac{(x-y)^2}{t-\sigma}}\frac{K}{(\sigma-\tau)^{\frac{3-\alpha}{2}}}
\mbox{e}^{-C\frac{(y-\xi)^2}{\sigma-\tau}}dyd\sigma \\
\nonumber
&\leq \frac{K\|a-\overline{a}\|_\infty}{(t-\tau)^{\frac{1}{2}}}\mbox{e}^{-C\frac{(x-\xi)^2}{t-\tau}}\int_\tau^{\frac{t+\tau}{2}}(t-\sigma)^{-1}(\sigma-\tau)^{-1+\frac{\alpha}{2}}d\sigma\\
\nonumber
& \leq \frac{K\|a-\overline{a}\|_\infty}{(t-\tau)^{\frac{3-\alpha}{2}}}
\mbox{e}^{-C\frac{(x-\xi)^2}{t-\tau}}
\end{align*}
Now,
\begin{align*}
I_2''&=\int_{\frac{t+\tau}{2}}^t\int(\partial_{xx}Z_{[v]}-\partial_{xx}Z_{[\overline{v}]})(x,y,t,\sigma)\phi_{[v]}(y,\sigma,\xi,\tau)dyd\sigma\\
\nonumber
&=\int_{\frac{t+\tau}{2}}^t\int(\partial_{xx}Z_{[v]}-\partial_{xx}Z_{[\overline{v}]})(x,y,t,\sigma)(\phi_{[v]}(y,\sigma,\xi,\tau)-\phi_{[v]}(x,\xi,\sigma,\tau)+\phi_{[v]}(x,\xi,\sigma,\tau))dyd\sigma\\
\nonumber
&=\int_{\frac{t+\tau}{2}}^t\int(\partial_{xx}Z_{[v]}-\partial_{xx}Z_{[\overline{v}]})(x,y,t,\sigma)(\phi_{[v]}(y,\sigma,\xi,\tau)-\phi_{[v]}(x,\xi,\sigma,\tau))dyd\sigma\\
\nonumber
&+\int_{\frac{t+\tau}{2}}^t\int\left((\partial_{xx}Z_{[v]}-\partial_{xx}Z_{[\overline{v}]})(x,y,t,\sigma)-(\partial_{xx}Z_{[v]}-\partial_{xx}Z_{[\overline{v}]})(x,x,t,\sigma)\right)\phi_{[v]}(x,\xi,\sigma,\tau)dyd\sigma,
\end{align*}
since $\int(\partial_{xx}Z_{[v]}(x,x,t,\sigma)-\partial_{xx}Z_{[\overline{v}]}(x,x,t,\sigma))dy=0$ (see \eqref{eq0.7.2}). Then, applying Lemma \ref{lem1.1}, \eqref{eq0.7.3}, \eqref{eq0.8}  and \eqref{eq0.9}, we obtain
\begin{align*}
|I_2''|&\leq\int_{\frac{t+\tau}{2}}^t\int\frac{K\|a-\overline{a}\|_\infty}{(t-\sigma)^{\frac{3}{2}}}
\mbox{e}^{-C\frac{(x-y)^2}{t-\sigma}}\frac{K|x-y|^{\gamma}}{(\sigma-\tau)^{\frac{3-(\alpha-\gamma)}{2}}}
(\mbox{e}^{-C\frac{(y-\xi)^2}{\sigma-\tau}}+\mbox{e}^{-C\frac{(x-\xi)^2}{\sigma-\tau}})dyd\sigma\\
\nonumber
&+\int_{\frac{t+\tau}{2}}^t\int K\|a-\overline{a}\|^\beta_\infty{(\frac{\mbox{e}^{-C\frac{(x-y)^2}{t-\sigma}}}{(t-\sigma)^{\frac{3}{2}}})}^\beta{|x-y|}^{\alpha(1-\beta)}{(\frac{\mbox{e}^{-C\frac{(x-y)^2}{t-\sigma}}}{(t-\sigma)^{\frac{3}{2}}})}^{1-\beta}{(\frac{\mbox{e}^{-C\frac{(x-\xi)^2}{\sigma-\tau}}}{(\sigma-\tau)^{\frac{3-\alpha}{2}}})}dyd\sigma\\\
\nonumber
&\leq \int_{\frac{t+\tau}{2}}^t\int\frac{K\|a-\overline{a}\|_\infty}{(t-\sigma)^{\frac{3-\gamma}{2}}}
\mbox{e}^{-C\frac{(x-y)^2}{t-\sigma}}\frac{1}{(\sigma-\tau)^{\frac{3-(\alpha-\gamma)}{2}}}
(\mbox{e}^{-C\frac{(y-\xi)^2}{\sigma-\tau}}+\mbox{e}^{-C\frac{(x-\xi)^2}{\sigma-\tau}})dyd\sigma\\
\nonumber
&+\int_{\frac{t+\tau}{2}}^t\int K\|a-\overline{a}\|^\beta_\infty\frac{\mbox{e}^{-C\frac{(x-y)^2}{t-\sigma}}}{(t-\sigma)^{\frac{3-\alpha(1-\beta)}{2}}}\frac{\mbox{e}^{-C\frac{(x-\xi)^2}{\sigma-\tau}}}{(\sigma-\tau)^{\frac{3-\alpha}{2}}}dyd\sigma\\
\nonumber
&\leq K(\|a-\overline{a}\|_\infty+\|a-\overline{a}\|^\beta_\infty)\frac{\mbox{e}^{-C\frac{(x-\xi)^2}{t-\tau}}}{(t-\tau)^{\frac{3-\alpha}{2}}}.
\end{align*}
Thus,
\begin{equation}
\label{eq1.12}
|I_2|\leq|I_2'|+|I_2''|\leq K(\|a-\overline{a}\|_\infty+\|a-\overline{a}\|^\beta_\infty)\frac{\mbox{e}^{-C\frac{(x-\xi)^2}{t-\tau}}}{(t-\tau)^{\frac{3-\alpha}{2}}}.
\end{equation}

To estimate $I_3$, we write
\begin{align*}
I_3=&\int_\tau^t\int\partial_{xx}Z_{[\overline{v}]}(x,y,t,\sigma)(\phi_{[v]}-
\phi_{[\overline{v}]})(y,\sigma,\xi,\tau)dyd\sigma\\
\nonumber
=&\int_\tau^t\int\partial_{xx}Z_{[\overline{v}]}(x,y,t,\sigma)
[(\phi_{[v]}-\phi_{[\overline{v}]})(y,\sigma,\xi,\tau)-(\phi_{[v]}-
\phi_{[\overline{v}]})(x,\xi,\sigma,\tau)]dyd\sigma\\
\nonumber
&+\int_\tau^t\int(\partial_{xx}Z_{[\overline{v}]}(x,y,t,\sigma)-\partial_{xx}Z_{[\overline{v}]}(x,x,t,\sigma))(\phi_{[v]}-
\phi_{[\overline{v}]})(x,\xi,\sigma,\tau)dyd\sigma\\
\nonumber
&\equiv I_3'+I_3'',
\end{align*}
where we have used \eqref{eq0.7.2}. Applying Lemma \ref{lem1.3} and \eqref{eq0.7}, we get
\begin{align*}
&|I_3'|\\
\leq&\int_{\tau}^t\int\frac{K}{(t-\sigma)^{\frac{3}{2}}}
\mbox{e}^{-C\frac{(x-y)^2}{t-\sigma}}\frac{K\|v-\overline{v}\|_{\alpha,\frac{\alpha}{2}}^\beta|x-y|^{\gamma(1-
\beta)}}{(\sigma-\tau)^{\frac{3-(\alpha-\gamma(1-\beta))}{2}}}(\mbox{e}^{-C\frac{(x-\xi)^2}{\sigma-\tau}}+ \mbox{e}^{-C\frac{(y-\xi)^2}{\sigma-\tau}})dyd\sigma\\
\nonumber
\leq&\int_{\tau}^t\int\frac{K}{(t-\sigma)^{\frac{3-\gamma(1-\beta)}{2}}}
\mbox{e}^{-C\frac{(x-y)^2}{t-\sigma}}\frac{K\|v-\overline{v}\|_{\alpha,\frac{\alpha}{2}}^\beta}
{(\sigma-\tau)^{\frac{3-(\alpha-\gamma(1-\beta))}{2}}}(\mbox{e}^{-C\frac{(x-\xi)^2}{\sigma-\tau}}+ \mbox{e}^{-C\frac{(y-\xi)^2}{\sigma-\tau}})dyd\sigma\\
\nonumber
\leq& K\|v-\overline{v}\|_{\alpha,\frac{\alpha}{2}}^\beta\int_{\tau}^t\frac{1}{(t-\sigma)^{\frac{3-\gamma(1-\beta)}{2}}}
\frac{1}
{(\sigma-\tau)^{\frac{3-(\alpha-\gamma(1-\beta))}{2}}}\mbox{e}^{-C\frac{(x-\xi)^2}{\sigma-\tau}}(\int \mbox{e}^{-C\frac{(x-y)^2}{t-\sigma}}dy)d\sigma\\
\nonumber
+&K\|v-\overline{v}\|_{\alpha,\frac{\alpha}{2}}^\beta\int_{\tau}^t\frac{1}{(t-\sigma)^{\frac{3-\gamma(1-\beta)}{2}}}
\frac{1}
{(\sigma-\tau)^{\frac{3-(\alpha-\gamma(1-\beta))}{2}}}(\int \mbox{e}^{-C\frac{(x-y)^2}{t-\sigma}} \mbox{e}^{-C\frac{(y-\xi)^2}{\sigma-\tau}}dy)d\sigma\\
\nonumber
\leq& K\|v-\overline{v}\|_{\alpha,\frac{\alpha}{2}}^\beta\int_{\tau}^t\frac{1}{(t-\sigma)^{\frac{2-\gamma(1-\beta)}{2}}}
\frac{1}
{(\sigma-\tau)^{\frac{3-(\alpha-\gamma(1-\beta))}{2}}}\mbox{e}^{-C\frac{(x-\xi)^2}{\sigma-\tau}}d\sigma\\
\nonumber
&+\frac{K\|v-\overline{v}\|_{\alpha,\frac{\alpha}{2}}^\beta}{(t-\tau)^{\frac{1}{2}}}\mbox{e}^{-C\frac{(x-\xi)^2}{t-\tau}}\int_{\tau}^t\frac{1}{(t-\sigma)^{\frac{2-\gamma(1-\beta)}{2}}}
\frac{1}
{(\sigma-\tau)^{\frac{2-(\alpha-\gamma(1-\beta))}{2}}}d\sigma\\
\nonumber
\leq& K\|v-\overline{v}\|_{\alpha,\frac{\alpha}{2}}^\beta(\frac{1}{(t-\tau)^{\frac{3-\alpha}{2}}}+\frac{1}{|x-\xi|^{1-(\alpha-\gamma(1-\beta))}(t-\tau)^{1-\frac{\alpha(1-\beta)}{2}}})\mbox{e}^{-C\frac{(x-\xi)^2}{t-\tau}}\\
\nonumber
&+\frac{K\|v-\overline{v}\|_{\alpha,\frac{\alpha}{2}}^\beta}{(t-\tau)^{\frac{3-\alpha}{2}}}\mbox{e}^{-C\frac{(x-\xi)^2}{t-\tau}}\\
\nonumber
\leq& K\|v-\overline{v}\|^\beta_{\alpha,\frac{\alpha}{2}}(\frac{1}{|x-\xi|^{1-(\alpha-\gamma(1-\beta))}(t-\tau)^{1-\frac{\gamma(1-\beta)}{2}}}+\frac{1}{(t-\tau)^{\frac{3-\alpha}{2}}})
\mbox{e}^{-C\frac{(x-\xi)^2}{t-\tau}}.
\end{align*}

\noindent
In order to estimate $I_3''$, we use Lemma \ref{lem1.3} and \eqref{eq0.7.3} as follows:
\begin{align*}
|I_3''|&\leq\int_{\tau}^t\int\frac{K|x-y|^\alpha}{(t-\sigma)^{\frac{3}{2}}}
\mbox{e}^{-C\frac{(x-y)^2}{t-\sigma}}\frac{\|v-\overline{v}\|_{\alpha,\frac{\alpha}{2}}}{(\sigma-\tau)^{\frac{3-\alpha}{2}}}\mbox{e}^{-C\frac{(x-\xi)^2}{\sigma-\tau}}dyd\sigma\\
\nonumber
&\leq K\|v-\overline{v}\|_{\alpha,\frac{\alpha}{2}}\int_{\tau}^t\frac{1}{(t-\sigma)^{\frac{3-\alpha}{2}}}
\frac{1}{(\sigma-\tau)^{\frac{3-\alpha}{2}}}\mbox{e}^{-C\frac{(x-\xi)^2}{\sigma-\tau}}(\int \mbox{e}^{-C\frac{(x-y)^2}{t-\sigma}} dy)d\sigma\\
\nonumber
&\leq K\|v-\overline{v}\|_{\alpha,\frac{\alpha}{2}}\int_{\tau}^t\frac{1}{(t-\sigma)^{\frac{2-\alpha}{2}}}
\frac{1}{(\sigma-\tau)^{\frac{3-\alpha}{2}}}\mbox{e}^{-C\frac{(x-\xi)^2}{\sigma-\tau}}d\sigma
\end{align*}
\begin{align*}
\nonumber
&\leq K\|v-\overline{v}\|_{\alpha,\frac{\alpha}{2}}\int_{\tau}^t\frac{T^{\frac{\alpha-\gamma(1-\beta)\beta}{2}}}{(t-\sigma)^{\frac{2-\gamma(1-\beta)}{2}}}
\frac{T^{\frac{\gamma(1-\beta)}{2}}}{(\sigma-\tau)^{\frac{3-(\alpha-\gamma(1-\beta))}{2}}}\mbox{e}^{-C\frac{(x-\xi)^2}{\sigma-\tau}}d\sigma\\
\nonumber
&\leq K\|v-\overline{v}\|_{\alpha,\frac{\alpha}{2}}(\frac{1}{|x-\xi|^{1-(\alpha-\gamma(1-\beta))}(t-\tau)^{1-\frac{\gamma(1-\beta)}{2}}}+\frac{1}{(t-\tau)^{\frac{3-\alpha}{2}}})
\mbox{e}^{-C\frac{(x-\xi)^2}{t-\tau}}.
\end{align*}
Then,
\begin{align}
\label{eq1.13}
|I_3|&\leq |I_3'|+|I_3''| \\
\nonumber
&\leq K(\|v-\overline{v}\|_{\alpha,\frac{\alpha}{2}}+\|v-\overline{v}\|^\beta_{\alpha,\frac{\alpha}{2}})(\frac{1}{|x-\xi|^{1-(\alpha-\gamma(1-\beta))}(t-\tau)^{1-\frac{\gamma(1-\beta)}{2}}}\\
\nonumber
&\hspace{8cm} +\frac{1}{(t-\tau)^{\frac{3-\alpha}{2}}})
\mbox{e}^{-C\frac{(x-\xi)^2}{t-\tau}}.
\end{align}

From the above estimates, \eqref{eq1.11}, \eqref{eq1.12} and \eqref{eq1.13}, we obtain
\begin{align*}
&|(\partial_{xx}\Gamma_{[v]}-\partial_{xx}\Gamma_{[\overline{v}]})(x,t,\xi,\tau)|\\
\nonumber
\leq &K(\|v-\overline{v}\|_{\alpha,\frac{\alpha}{2}}+\|v-\overline{v}\|^\beta_{\alpha,\frac{\alpha}{2}})\\
& \hspace{1cm} \times (\frac{1}{|x-\xi|^{1-(\alpha-\gamma(1-\beta))}(t-\tau)^{1-\frac{\gamma(1-\beta)}{2}}}+\frac{1}{(t-\tau)^{\frac{3}{2}}})
\mbox{e}^{-C\frac{(x-\xi)^2}{t-\tau}}.
\end{align*}

Finally, the proof of \eqref{eq1.7} follows from \eqref{eq1.5}, \eqref{eq1.6}
and the equations\break\hfill $L_{[v]}\Gamma_{[v]}=L_{[\overline{v}]}\Gamma_{[\overline{v}]}=0$.
\ \ \end{proof}

\smallskip

\begin{corollary} 
\label{uniform L}
For $v\in B(R,\lambda,\alpha)$ we have the following uniform estimate:
\begin{equation}
\label{eq uniform L}
|D_x^s\Gamma_{[v]}(x,t,\xi,\tau)|\leq \frac{K}{(t-\tau)^{\frac{s+1}{2}}}\mbox{e}^{-C\frac{(x-\xi)^2}{t-\tau}},\, s=0,1
\end{equation}
where $K=K(R,\lambda,\alpha,T)$.
\end{corollary}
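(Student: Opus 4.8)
The corollary follows by combining the known uniform bound on the fundamental solution of a single parabolic equation (estimate \eqref{eq0.10} with $r=0$, $s=0,1$) together with the comparison estimates of Lemma \ref{lem1.4}, exploiting the fact that constants there depend only on $R,\lambda,\alpha,T$ and not on the coefficient vector $v$. The point to keep in mind is that \eqref{eq0.10} already asserts the desired bound for \emph{each} fixed equation, but with a constant $K=K(\lambda_0,\lambda_1,\|a\|_{\alpha,\alpha/2},\|b\|_\infty,\|c\|_\infty,T)$ that a priori depends on the particular coefficients; what we must extract is that, as $v$ ranges over the bounded set $B(R,\lambda,\alpha)$, these constants can be taken uniform.

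First I would fix once and for all a reference coefficient vector $v_0\in B(R,\lambda,\alpha)$, for instance $v_0=(\lambda',0,0)$ for some $\lambda<\lambda'<R$, or indeed any convenient element; call its fundamental solution $\Gamma_{[v_0]}$. By \eqref{eq0.10} (with $r=0$) applied to $\mathcal L_{[v_0]}$ we have, for $s=0,1$,
\[
|D_x^s\Gamma_{[v_0]}(x,t,\xi,\tau)|\le \frac{K_0}{(t-\tau)^{\frac{s+1}{2}}}\,\mathrm e^{-C_0\frac{(x-\xi)^2}{t-\tau}},
\]
where $K_0,C_0$ depend only on $\lambda,R,\alpha,T$ (since the norms of the coefficients of $v_0$ are themselves controlled by $R$). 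Next, for an arbitrary $v\in B(R,\lambda,\alpha)$, I write $D_x^s\Gamma_{[v]}=D_x^s\Gamma_{[v_0]}+(D_x^s\Gamma_{[v]}-D_x^s\Gamma_{[v_0]})$ and apply the triangle inequality. The difference term is controlled by Lemma \ref{lem1.4}, estimate \eqref{eq1.5}, which gives for $s=0,1$
\[
|(D_x^s\Gamma_{[v]}-D_x^s\Gamma_{[v_0]})(x,t,\xi,\tau)|\le \frac{K_1\|v-v_0\|_{\alpha,\frac{\alpha}{2}}}{(t-\tau)^{\frac{s+1}{2}}}\,\mathrm e^{-C_1\frac{(x-\xi)^2}{t-\tau}},
\]
with $K_1,C_1$ depending only on $R,\lambda,\alpha,T$. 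Since $v,v_0\in B(R,\lambda,\alpha)$ we have $\|v-v_0\|_{\alpha,\alpha/2}\le \|v\|_{\alpha,\alpha/2}+\|v_0\|_{\alpha,\alpha/2}<2R$, so this factor is bounded by a constant depending only on $R$. Taking $C=\min\{C_0,C_1\}$ (so that both Gaussian factors are dominated by $\mathrm e^{-C(x-\xi)^2/(t-\tau)}$, using that the exponentials are increasing in the constant for the relevant sign) and $K=K_0+2RK_1$, which depends only on $R,\lambda,\alpha,T$, we obtain the claimed bound \eqref{eq uniform L}, uniform over $v\in B(R,\lambda,\alpha)$.

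The only minor subtlety — hardly an obstacle — is bookkeeping on the constant $C$ in the exponential: Lemma \ref{lem1.4} requires $C<1/(4R)$ and \eqref{eq0.10} carries its own $C=C(\lambda_1)$; one simply takes the smaller of the two, and since $\mathrm e^{-C'\,r}\le \mathrm e^{-C\,r}$ whenever $0<C\le C'$ and $r\ge0$, replacing the larger constant by the smaller one only weakens each individual bound in a compatible direction. A secondary point is to make sure the reference vector $v_0$ genuinely lies in $B(R,\lambda,\alpha)$ (i.e. $a_0\ge\lambda$ and all three H\"older norms are $<R$), which is immediate for $v_0=(\lambda',0,0)$ with $\lambda<\lambda'<R$. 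With these choices the corollary is proved. I do not expect any real difficulty here; the content is entirely in Lemma \ref{lem1.4}, and this corollary is just the observation that the comparison estimate plus one fixed baseline estimate yields a uniform bound.
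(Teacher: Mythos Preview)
Your proposal is correct and is essentially the paper's own proof: the paper's argument is the one-line ``Take $\overline{v}=(1,0,0)$ in \eqref{eq1.5}'', i.e.\ exactly your triangle-inequality comparison against a fixed constant-coefficient reference via Lemma~\ref{lem1.4}. Your choice $v_0=(\lambda',0,0)$ with $\lambda<\lambda'<R$ is slightly more careful than the paper's $(1,0,0)$ in that it guarantees $v_0\in B(R,\lambda,\alpha)$ without any assumption on the size of $\lambda$ or $R$, but the content is the same.
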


\begin{proof} Take $\overline{v}=(1,0,0)$ in \eqref{eq1.5}. \ \ \end{proof}
\smallskip

We also have the following lemma.
\begin{lemma}
\label{lem1.5}
Let $v_n,v\in B(R,\lambda,\alpha)$, $n=1,2,\cdots$. If $v_n(x,t)$ converges to $v(x,t)$ pointwise in $\R\times[0,T]$, as $n$ goes to infinity, then $\Gamma_{[v_n]}(x,t,\xi,\tau)$ converges to $\Gamma_{[v]}(x,t,\xi,\tau)$, for any $(x,t),\,(\xi,\tau)\in\R\times[0,T]$, with $t>\tau$.
\end{lemma}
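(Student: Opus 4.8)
The difficulty is that Lemma~\ref{lem1.4}, and in particular the estimate \eqref{eq1.5}, only gives $\Gamma_{[v_n]}\to\Gamma_{[v]}$ when $\|v_n-v\|_{\alpha,\frac{\alpha}{2}}\to 0$, whereas here we are given only pointwise convergence $v_n(x,t)\to v(x,t)$; pointwise (indeed even uniform) convergence of an equi-H\"older family does not force convergence of the H\"older seminorms, so \eqref{eq1.5} cannot be invoked directly. The plan is to go back to the parametrix construction \eqref{gamma}--\eqref{defLZm} and pass to the limit term by term, using the fact that all the standard estimates \eqref{eq0.7}, \eqref{eq0.8}, \eqref{eq0.9}, \eqref{eq0.10} and \eqref{eq uniform L} hold with constants $C,K$ that depend on $v_n$ only through the membership $v_n\in B(R,\lambda,\alpha)$, hence are uniform in $n$ and supply $n$-independent majorants for the dominated convergence theorem.

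First I would note that $Z_{[v]}(x,t,\xi,\tau)$ in \eqref{z}, and each of its $x$-derivatives $\partial_x^sZ_{[v]}$, $s=0,1,2$, is a continuous function of the single scalar $a(\xi,\tau)\in[\lambda,R]$; since $a_n(\xi,\tau)\to a(\xi,\tau)$, it follows that $\partial_x^sZ_{[v_n]}(x,t,\xi,\tau)\to\partial_x^sZ_{[v]}(x,t,\xi,\tau)$ for every choice of the four arguments with $t>\tau$. Consequently $(\mathcal{L}_{[v_n]}Z_{[v_n]})_1$, being equal to $(a_n(\xi,\tau)-a_n(x,t))\,\partial_{xx}Z_{[v_n]}+b_n(x,t)\,\partial_xZ_{[v_n]}+c_n(x,t)\,Z_{[v_n]}$, converges pointwise to $(\mathcal{L}_{[v]}Z_{[v]})_1$. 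I would then prove by induction on $m$ that $(\mathcal{L}_{[v_n]}Z_{[v_n]})_m(x,t,\xi,\tau)\to(\mathcal{L}_{[v]}Z_{[v]})_m(x,t,\xi,\tau)$ pointwise for $t>\tau$: the base case is the preceding remark, and in the inductive step one applies the dominated convergence theorem in \eqref{defLZm}, the integrand converging pointwise (base case for the factor $\mathcal{L}_{[v_n]}Z_{[v_n]}(x,t,y,\sigma)$, inductive hypothesis for the factor $(\mathcal{L}_{[v_n]}Z_{[v_n]})_m(y,\sigma,\xi,\tau)$) and being dominated, uniformly in $n$, by the product of Gaussian kernels coming from the classical $n$-independent bound $|(\mathcal{L}_{[v_n]}Z_{[v_n]})_m|\le K^m(\pi/C)^{\frac{m-1}{2}}\frac{g(\alpha/2)^m}{g(m\alpha/2)}(t-\tau)^{-\frac{3-m\alpha}{2}}\mbox{e}^{-C(x-\xi)^2/(t-\tau)}$ (obtained exactly as \eqref{eq1.4}), whose double integral over $\R\times(\tau,t)$ is finite by \eqref{eq1.7.1} and the beta-function identity.

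Next, for fixed arguments with $\sigma>\tau$ the series \eqref{phi} defining $\phi_{[v_n]}(y,\sigma,\xi,\tau)$ is majorized, uniformly in $n$, by a series of nonnegative terms that converges (this is precisely how the classical estimate \eqref{eq0.8} is obtained; cf. Lemma~\ref{lem1.2}), and each of its terms converges pointwise by the previous step, so $\phi_{[v_n]}(y,\sigma,\xi,\tau)\to\phi_{[v]}(y,\sigma,\xi,\tau)$ for every $(y,\sigma)$ with $\sigma>\tau$. Finally I would pass to the limit in \eqref{gamma} written for $v=v_n$: its first term $Z_{[v_n]}(x,t,\xi,\tau)$ converges by the first step, and in the double integral the integrand $Z_{[v_n]}(x,t,y,\sigma)\,\phi_{[v_n]}(y,\sigma,\xi,\tau)$ converges pointwise (by the first step with $(\xi,\tau)$ replaced by $(y,\sigma)$, together with the convergence of $\phi_{[v_n]}$) and is dominated, uniformly in $n$, by $K(t-\sigma)^{-1/2}\mbox{e}^{-C(x-y)^2/(t-\sigma)}\cdot K(\sigma-\tau)^{-\frac{3-\alpha}{2}}\mbox{e}^{-C(y-\xi)^2/(\sigma-\tau)}$ (using \eqref{eq uniform L} and \eqref{eq0.8}), which is integrable over $\R\times(\tau,t)$; dominated convergence then gives $\Gamma_{[v_n]}(x,t,\xi,\tau)\to\Gamma_{[v]}(x,t,\xi,\tau)$, as claimed. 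The step requiring care---the main obstacle in an otherwise routine argument---is the uniform-in-$n$ domination at each stage, i.e. checking that every constant produced along the parametrix iteration depends on $v_n$ only through $v_n\in B(R,\lambda,\alpha)$. (Alternatively, one could use the uniform bounds \eqref{eq0.10} together with interior Schauder estimates uniform over $B(R,\lambda,\alpha)$ to extract from $\{\Gamma_{[v_n]}(\cdot,\cdot,\xi,\tau)\}$ a subsequence converging in $C^{2,1}_{\mathrm{loc}}$ away from $(\xi,\tau)$, let $n\to\infty$ in $\mathcal{L}_{[v_n]}\Gamma_{[v_n]}=0$ to see that the limit solves $\mathcal{L}_{[v]}(\cdot)=0$ with the delta initial datum, identify it with $\Gamma_{[v]}$ by uniqueness, and conclude by the usual subsequence argument; but the term-by-term route above is more elementary and self-contained.)
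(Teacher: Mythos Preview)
Your proof is correct and follows essentially the same route as the paper's: both go back to the parametrix construction \eqref{gamma}--\eqref{defLZm}, establish pointwise convergence of $Z_{[v_n]}$ and its derivatives, then of each iterate $(\mathcal{L}_{[v_n]}Z_{[v_n]})_m$ by induction and dominated convergence with the uniform-in-$n$ majorant $K^m(\pi/C)^{(m-1)/2}\frac{g(\alpha/2)^m}{g(m\alpha/2)}(t-\tau)^{-\frac{3-m\alpha}{2}}\mbox{e}^{-C(x-\xi)^2/(t-\tau)}$, sum the series to get $\phi_{[v_n]}\to\phi_{[v]}$, and apply dominated convergence once more in \eqref{gamma}. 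One small slip: in your final domination you cite \eqref{eq uniform L}, which bounds $\Gamma_{[v]}$, but what you actually need there is the bound on $Z_{[v_n]}$, i.e.\ \eqref{eq0.7} with $r=s=0$; the stated majorant is nonetheless correct.
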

\begin{proof}
First we show the pointwise convergence of $Z_{[v_n]}$ and $\phi_{[v_n]}$.
From \eqref{z} it is easy to see that
\begin{equation}
\label{eq1.27}
D_t^rD_x^sZ_{[v_n]}\rightarrow D_t^rD_x^sZ_{[v]}
\end{equation}
pointwise, where $r$ and $s$ are nonnegative integers.
To proof that $\phi_{[v_n]}$ converges pointwise to $\phi_{[v]}$, we notice that
$$
\mathcal{L}_{[v_n]}(Z_{[v_n]})=(a_n(\xi,\tau)-a_n(x,t))\partial_{xx}Z_{[v_n]}+b_n(x,t)\partial_x Z_{[v_n]}+c_n(x,t)Z_{[v_n]},
$$
so, it follows from \eqref{eq1.27} that
\begin{equation}
\label{eq1.24}
\mathcal{L}_{[v_n]}(Z_{[v_n]})\rightarrow\mathcal{L}_{[v]}(Z_{[v]}),
\end{equation}
pointwise. Besides, we have
\begin{equation}
\label{eq1.25}
|\mathcal{L}_{[v_n]}(Z_{[v_n]}(x,t,\xi,\tau))|\leq \frac{K}{(t-\tau)^{\frac{3-\alpha}{2}}}\mbox{e}^{-C\frac{(x-\xi)^2}{t-\tau}},
\end{equation}
where $K$ and $C$ are positive constants which do not depends on $n$.
Now, recalling \eqref{defLZm}, one can show by induction on $m$ that
$(\mathcal{L}_{[v_n]})_m$ converges to $(\mathcal{L}_{[v]})_m$, pointwise, as $m$ goes to infinity.
Indeed, following the construction of the fundamental solution in \cite[p. 362]{Lad}, we have
\begin{equation}
\label{eq1.26}
|(\mathcal{L}_{[v_n]})_m(Z_{[v_n]}(x,t,\xi,\tau))|\leq K^m\left(\frac{\pi}{C}\right)^{\frac{m-1}{2}}\frac{g(\frac{\alpha}{2})^m}{g(\frac{m\alpha}{2})}\frac{1}{(t-\tau)^{\frac{3-m\alpha}{2}}}\mbox{e}^{-C\frac{(x-\xi)^2}{t-\tau}},
\end{equation}
where $g$ is the {\em gamma function}.
So,

$|(\mathcal{L}_{[v_n]})(Z_{[v_n]}(x,y,t,\sigma))(\mathcal{L}_{[v_n]})_m(Z_{[v_n]}(y,\xi,\sigma,\tau))|$

\centerline{$\leq \frac{K\mbox{e}^{-C\frac{(x-y)^2}{t-\sigma}}}{(t-\sigma)^{\frac{3-\alpha}{2}}}K^m(\frac{\pi}{C})^{\frac{m-1}{2}}\frac{g(\frac{\alpha}{2})^m}{g(\frac{m\alpha}{2})}\frac{\mbox{e}^{-C\frac{(y-\xi)^2}{\sigma-\tau}}}{(\sigma-\tau)^{\frac{3-m\alpha}{2}}}$}

\noindent
and thus, by the induction hypothesis, we obtain $(\mathcal{L}_{[v_n]})(Z_{[v_n]})(L_{[v_n]})_m(Z_{[v_n]})\rightarrow (\mathcal{L}_{[v]})(Z_{[v]})(\mathcal{L}_{[v]})_m(Z_{[v]})$, pointwise. Then, by the Lebesgue's Dominated Convergence Theorem,

\noindent
$(\mathcal{L}_{[v_n]})_{m+1}(Z_{[v_n]}(x,t,\xi,\tau))$

\centerline{$=\int_\tau^t\int(\mathcal{L}_{[v_n]})(Z_{[v_n]}(x,y,t,\sigma))(\mathcal{L}_{[v_n]})_m(Z_{[v_n]}(y,\xi,\sigma,\tau))dyd\sigma$}

\noindent
converges to  $(\mathcal{L}_{[v]})_{m+1}(Z_{[v]}(x,t,\xi,\tau))$.
The estimate \eqref{eq1.26} ensures the uniform convergence of \break  $\sum_{m=1}^\infty(-1)^m(\mathcal{L}_{[v_n]}(Z_{[v_n]})_m(x,t,\xi,\tau)$ with respect to $(x,\xi)$ and $t-\tau>\delta$, for each fixed $\delta>0$, and so, $\phi_{[v_n]}\rightarrow \phi_{[v]}$, pointwise.
To end the proof of the Lemma, notice that

$|Z_{[v_n]}(x,y,t,\sigma)\phi_{[v_n]}(y,\sigma,\xi,\tau)|\leq \frac{K}{(t-\sigma)^{\frac{1}{2}}}\mbox{e}^{-C\frac{(x-y)^2}{t-\sigma}}\frac{1}{(\sigma-\tau)^{\frac{3-\alpha}{2}}}\mbox{e}^{-C\frac{(y-\xi)^2}{\sigma-\tau}}$\\
and  $Z_{[v_n]}\phi_{[v_n]}$ converges pointwise to $Z_{[v]}\phi_{[v]}$, so, again from the Lebesgue's Dominated Convergence Theorem it follows that
$\int_\tau^t\int Z_{[v_n]}\phi_{[v_n]}dyd\sigma\rightarrow \int_\tau^t\int Z_{[v]}\phi_{[v]}dyd\sigma$. Thus, we conclude that $\Gamma_{[v_n]}\rightarrow \Gamma_{[v]}$, pointwise.
\ \ \end{proof}

\smallskip

\begin{theorem}
\label{teo1.1}
Let $T>0$, $\beta\in (0,1)$, \ $v=(a,b,0), \ \overline{v}=(\overline{a},\overline{b},0)\in B(R,\lambda,1)$, $f,\overline{f}\in C^{1,\frac{1}{2}}(\Omega_T)$ and $u_0,\overline{u}_0$ be Lipschitz continuous and bounded functions in $\R$. If  $u$ and $\overline{u}$ are the  solutions of the problems
\begin{equation}
 \label{eq1.14}
 \mathcal{L}_{[v]}u=f,\quad\mbox{in}\quad \R\times(0,T],\quad  u(x,0)=u_0,\quad x\in\R,
  \end{equation}
  \begin{equation}
  \mathcal{L}_{[\overline{v}]}\overline{u}=\overline{f},\quad\mbox{in}\quad \R\times(0,T],\quad u(x,0)=\overline{u}_0,\quad x\in\R,
  \end{equation}
then
  \begin{align}
  \|u-\overline{u}\|_{1,\frac{1}{2}}\leq K[&\|v-\overline{v}\|_{1,\frac{1}{2}}+\|v-\overline{v}\|_{1,\frac{1}{2}}^\beta+\|u_0-\overline{u}_0\|_1\\
  \nonumber
  &+T^{\frac{1}{2}}(\|f\|_{1,\frac{1}{2}}+1)(\|f-\overline{f}\|_{1,\frac{1}{2}}+\|v-\overline{v}\|_{1,\frac{1}{2}}+\|v-\overline{v}\|_{1,\frac{1}{2}}^\beta)],
\end{align}
where $K=K(R,\lambda,T,\|u_0\|_1)$.
 \end{theorem}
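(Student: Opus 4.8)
The plan is to represent both $u$ and $\overline{u}$ through the integral formula of Theorem~\ref{teo0.1} and to estimate the difference term by term, using the bounds on fundamental solutions collected in Section~\ref{properties} and in Lemma~\ref{lem1.4}. Since $u_0,\overline{u}_0$ are bounded and $f,\overline{f}$ are bounded and Lipschitz in $x$, Theorem~\ref{teo0.1} applies, so $u(x,t)=\int_{\R}\Gamma_{[v]}(x,t,\xi,0)u_0(\xi)\,d\xi+\int_0^t\!\int_{\R}\Gamma_{[v]}(x,t,\xi,\tau)f(\xi,\tau)\,d\xi d\tau$, and likewise for $\overline{u}$ with $\overline{v},\overline{f},\overline{u}_0$. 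Subtracting,
\[
u-\overline{u}=A_1+A_2+A_3+A_4,
\]
where $A_1=\int_{\R}(\Gamma_{[v]}-\Gamma_{[\overline{v}]})(x,t,\xi,0)u_0(\xi)\,d\xi$, $A_2=\int_{\R}\Gamma_{[\overline{v}]}(x,t,\xi,0)(u_0-\overline{u}_0)(\xi)\,d\xi$, $A_3=\int_0^t\!\int_{\R}(\Gamma_{[v]}-\Gamma_{[\overline{v}]})(x,t,\xi,\tau)f(\xi,\tau)\,d\xi d\tau$ and $A_4=\int_0^t\!\int_{\R}\Gamma_{[\overline{v}]}(x,t,\xi,\tau)(f-\overline{f})(\xi,\tau)\,d\xi d\tau$. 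Because $\|w\|_{1,\frac12}$ is comparable to the sum of $\sup|w|$, the $x$-Lipschitz seminorm of $w$, and the $t$-Hölder-$\tfrac12$ seminorm of $w$, it suffices to bound these three quantities for each $A_k$.

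For $A_1$ and $A_3$ I would use Lemma~\ref{lem1.4}: estimate~\eqref{eq1.5} (with $s=0$ and $s=1$) controls $\sup|A_k|$ and $\sup|\partial_xA_k|$, while \eqref{eq1.6} and \eqref{eq1.7} are used for the $t$-Hölder seminorm (fixing any $\gamma\in(0,1)$ and the given $\beta\in(0,1)$). The $\xi$-integrals are disposed of by the Gaussian identities $\int_{\R}e^{-C(x-\xi)^2/(t-\tau)}\,d\xi\le C'(t-\tau)^{1/2}$, $\int_{\R}|x-\xi|\,e^{-C(x-\xi)^2/(t-\tau)}\,d\xi\le C'(t-\tau)$, and $\int_{\R}|x-\xi|^{-\mu}e^{-C(x-\xi)^2/(t-\tau)}\,d\xi\le C'(t-\tau)^{(1-\mu)/2}$ for $0<\mu<1$; with $\alpha=1$ the weakly singular factor in \eqref{eq1.6}--\eqref{eq1.7} has exponent $\mu=\gamma(1-\beta)$, and after integration the powers of $(t-\tau)$ recombine so that $\gamma$ drops out, which is why it is absent from the final inequality. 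A decisive point is that $c\equiv0$ here, so $\int_{\R}\Gamma_{[v]}\,d\xi=\int_{\R}\Gamma_{[\overline{v}]}\,d\xi=1$ and $\int_{\R}\partial_x\Gamma_{[v]}\,d\xi=\int_{\R}\partial_x\Gamma_{[\overline{v}]}\,d\xi=0$ (Remark~\ref{ob0.1}); hence $\int_{\R}(\Gamma_{[v]}-\Gamma_{[\overline{v}]})(x,t,\xi,\tau)\,d\xi=0$, so in $A_1$, $A_3$ and $\partial_xA_1$ one may replace $u_0(\xi)$ by $u_0(\xi)-u_0(x)$ and $f(\xi,\tau)$ by $f(\xi,\tau)-f(x,\tau)$. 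Using $|u_0(\xi)-u_0(x)|\le\|u_0\|_1|x-\xi|$ and $|f(\xi,\tau)-f(x,\tau)|\le\|f\|_{1,\frac12}|x-\xi|$ supplies the extra factor $|x-\xi|$ that renders the time integrals convergent, and this is the origin of the dependence of $K$ on $\|u_0\|_1$ and of the factor $\|f\|_{1,\frac12}$ in the statement. The resulting bounds carry $\|v-\overline{v}\|_{1,\frac12}$ (from \eqref{eq1.5}) and $\|v-\overline{v}\|_{1,\frac12}+\|v-\overline{v}\|_{1,\frac12}^{\beta}$ (from \eqref{eq1.6}--\eqref{eq1.7}).

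For $A_2$ and $A_4$ I would run the identical argument with $\Gamma_{[\overline{v}]}$ in place of $\Gamma_{[v]}-\Gamma_{[\overline{v}]}$, now invoking Corollary~\ref{uniform L} for $\Gamma_{[\overline{v}]}$ and $\partial_x\Gamma_{[\overline{v}]}$ and estimate~\eqref{eq0.10} for $\partial_t\Gamma_{[\overline{v}]}$, together with the zero-mean subtraction of $(u_0-\overline{u}_0)(x)$ in $A_2$ and of $(f-\overline{f})(x,\tau)$ in $A_4$ (which uses $|(f-\overline{f})(\xi,\tau)-(f-\overline{f})(x,\tau)|\le\|f-\overline{f}\|_{1,\frac12}|x-\xi|$). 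This yields $\sup|A_2|\le\|u_0-\overline{u}_0\|_\infty$, $\sup|\partial_xA_2|\le K\|u_0-\overline{u}_0\|_1$, and $\sup|A_4|+\sup|\partial_xA_4|\le KT^{1/2}\|f-\overline{f}\|_{1,\frac12}$ (the $T^{1/2}$ coming from $\int_0^t(t-\tau)^{-1/2}d\tau=2t^{1/2}$ in the estimate of $\partial_xA_4$), with the corresponding $t$-Hölder bounds; the harmless cross terms in the statement, e.g.\ $T^{1/2}\|f\|_{1,\frac12}\|f-\overline{f}\|_{1,\frac12}$, appear only when the individual estimates are regrouped, using $\|\cdot\|_\infty\le\|\cdot\|_{1,\frac12}$ and $T\le T^{1/2}\cdot T^{1/2}$, and the fact that $K$ may depend continuously on $T$.

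The main obstacle is the $t$-Hölder-$\tfrac12$ estimate for the terms carrying a time integral, $A_3$ and $A_4$. For $0\le s<t\le T$ one splits $A_k(x,t)-A_k(x,s)$ into the $\int_s^t\!\int_{\R}$ of the time-$t$ kernel against the source plus the $\int_0^s\!\int_{\R}$ of the kernel difference against the source, and writes the kernel difference as $\int_s^t\partial_\sigma(\,\cdot\,)(x,\sigma,\xi,\tau)\,d\sigma$, bounded by \eqref{eq1.7}, resp.\ \eqref{eq0.10}. These $\partial_\sigma$-estimates contain the non-integrable-in-time factor $(\sigma-\tau)^{-3/2}$ (besides the weak singularity $|x-\xi|^{-\mu}$), so the $\tau$-integral over $[0,s]$ is only borderline: one must again exploit the zero-mean identities $\int_{\R}[\Gamma(x,t,\xi,\tau)-\Gamma(x,s,\xi,\tau)]\,d\xi=0$ (and $\int_{\R}\partial_x\Gamma\,d\xi=0$) to subtract the value of the source at $x$, so that the Lipschitz bound supplies the factor $|x-\xi|$ which, after the $\xi$-integration, turns the worst factor $(\sigma-\tau)^{-3/2}$ into the integrable $(\sigma-\tau)^{-1/2}$ and annihilates the $|x-\xi|^{-\mu}$ singularity. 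Then the elementary inequalities $\int_s^t(\sigma-\tau)^{-1/2}\,d\sigma=2\big((t-\tau)^{1/2}-(s-\tau)^{1/2}\big)\le 2(t-s)^{1/2}$ and $\int_0^s d\tau\le T$ extract the factor $(t-s)^{1/2}$ at the price of a constant depending continuously on $T$ (if one prefers to avoid the subtraction in some pieces, one instead faces $\int_0^s\!\int_s^t(\sigma-\tau)^{-1}\,d\sigma d\tau=t\log t-s\log s-(t-s)\log(t-s)$, which is again $O_T((t-s)^{1/2})$ by $x\log(T/x)\le C(T)x^{1/2}$ on $(0,T]$). Collecting the four contributions $A_1,\dots,A_4$ and tidying up with $\|\cdot\|_\infty\le\|\cdot\|_{1,\frac12}$ and the $T$-dependence of $K$ gives the asserted inequality.
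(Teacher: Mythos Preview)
Your proposal is correct and follows essentially the same approach as the paper. The paper writes $u-\overline{u}=V+W$ (initial-data part plus source part) and then splits each of $V,W$ exactly into your $A_1+A_2$ and $A_3+A_4$; the key ingredients---Lemma~\ref{lem1.4} for $\Gamma_{[v]}-\Gamma_{[\overline v]}$, Corollary~\ref{uniform L} and \eqref{eq0.10} for $\Gamma_{[\overline v]}$, and the zero-mean identities of Remark~\ref{ob0.1} to insert the Lipschitz factor $|x-\xi|$---are identical. The one bookkeeping difference is in the $t$-H\"older estimate of the source terms: the paper introduces an $\epsilon$-strip (the splitting $W=W_1+W_2+W_3$ with $W_2$ over $[t'-\epsilon,t']$ bounded crudely by $K\epsilon$) before writing the kernel difference as $\int_{t'}^t\partial_s\Gamma\,ds$, and lets $\epsilon\to0$ at the end; you argue directly, which works because after the subtraction the $\xi$-integral turns the non-integrable $(\sigma-\tau)^{-3/2}$ into $(\sigma-\tau)^{-1/2}$ and Tonelli justifies the interchange. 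Your parenthetical log computation has a harmless slip (the exact value is $t\log t-s\log s-(t-s)\log(t-s)-(t-s)$), but since your main line uses the subtraction this does not affect the argument.
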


\begin{proof}
From Theorem  \ref{teo0.1} we have
\begin{align*}
(u-\overline{u})(x,t)=&\int_\R\Gamma_{[v]}(x,t,\xi,0)u_0(\xi)-\Gamma_{[\overline{v}]}(x,t,\xi,0)\overline{u}_0(\xi)d\xi\\
\nonumber
&+\int_0^t\int_\R\Gamma_{[v]}(x,t,\xi,\tau)f(\xi,\tau)-\Gamma_{[\overline{v}]}(x,t,\xi,\tau)\overline{f}(\xi,\tau)d\xi d\tau\\
\nonumber
&\equiv V(x,t)+W(x,t).
\end{align*}
By Lema \ref{lem1.4} and \eqref{eq0.10}, we get
\begin{align}
\label{eq1.15}
|V(x,t)|&\leq\int_\R|\Gamma_{[v]}(x,t,\xi,0)u_0(\xi)-\Gamma_{[\overline{v}]}(x,t,\xi,0)\overline{u}_0(\xi)|d\xi\\
\nonumber
&\leq\int_\R|(\Gamma_{[v]}-\Gamma_{[\overline{v}]})(x,t,\xi,0)u_0(\xi)|+|\Gamma_{[\overline{v}]}(x,t,\xi,0)(u_0(\xi)-\overline{u}_0(\xi))|d\xi\\
\nonumber
&\leq\int_\R\frac{K\|v-\overline{v}\|_{1,\frac{1}{2}}}{t^{\frac{1}{2}}}\mbox{e}^{-C\frac{(x-\xi)^2}{t}}\|u_0\|_\infty+\frac{K}{t^{\frac{1}{2}}}\mbox{e}^{-C\frac{(x-\xi)^2}{t}}\|u_0-\overline{u}_0\|_\infty d\xi\\
\nonumber
&\leq K(\|v-\overline{v}\|_{1,\frac{1}{2}}+ \|u_0-\overline{u}_0\|_\infty),
\end{align}
where $K=K(R,\lambda,T,\|u_0\|_\infty)$. 
 In view of Remark \ref{ob0.1}, we can write
\begin{align*}
\partial_xV(x,t)&=\int_\R\partial_x\Gamma_{[v]}(x,t,\xi,0)u_0(\xi)-\partial_x\Gamma_{[\overline{v}]}(x,t,\xi,0)\overline{u}_0(\xi)d\xi\\
\nonumber
&=\int_\R(\partial_x\Gamma_{[v]}-\partial_x\Gamma_{[\overline{v}]})(x,t,\xi,0)(u_0(\xi)-u_0(x))d\xi\\
\nonumber
&+\int\partial_x\Gamma_{[\overline{v}]}(x,t,\xi,0)[(u_0(\xi)-\overline{u}_0(\xi))-(u_0(x)-\overline{u}_0(x))]d\xi,\\
\end{align*}
so, by Lemma \ref{lem1.4} and estimate \eqref{eq0.10} and using that
$$
\frac{|x-\xi|}{t} \mbox{e}^{-C\frac{(x-\xi)^2}{t}}
=\frac{1}{t^{1/2}} (\frac{|x-\xi|}{t^{1/2}} \mbox{e}^{-(C/2)\frac{(x-\xi)^2}{t}})\mbox{e}^{-(C/2)\frac{(x-\xi)^2}{t}}
\leq \mbox{const.}\frac{1}{t^{1/2}}\mbox{e}^{-(C/2)\frac{(x-\xi)^2}{t}},
$$
we get
\begin{align}
\label{eq1.16}
|\partial_xV(x,t)|&\leq \int_\R(\frac{K\|v-\overline{v}\|_{1,\frac{1}{2}} \|u_0\|_1|x-\xi|}{t}+\frac{K\|u_0-\overline{u}_0\|_1|x-\xi|}{t})\mbox{e}^{-C\frac{(x-\xi)^2}{t}}d\xi\\
\nonumber
&\leq \int_\R\frac{K\|u_0\|_1\|v-\overline{v}\|_{1,\frac{1}{2}}}{t^{\frac{1}{2}}}+\frac{K\|u_0-\overline{u}_0\|_1}{t^{\frac{1}{2}}})\mbox{e}^{-C\frac{(x-\xi)^2}{t}}d\xi\\
\nonumber
&\leq K(\|v-\overline{v}\|_{1,\frac{1}{2}}+\|u_0-\overline{u}_0\|_1),
\end{align}
with $K=K(R,\lambda,T,\|u_0\|_1)$.

In order to get the H\"older continuity with respect to $t$, using again Remark \ref{ob0.1}, we write
\begin{align*}
&V(x,t)-V(x,t')\\
\nonumber
&=\int_\R(\Gamma_{[v]}(x,t,\xi,0)-\Gamma_{[v]}(x,t',\xi,0))u_0(\xi)-(\Gamma_{[\overline{v}]}(x,t,\xi,0)-\Gamma_{[\overline{v}]}(x,t',\xi,0))\overline{u}_0(\xi)d\xi\\
\nonumber
&=\int_\R\int_{t'}^t\partial_t\Gamma_{[v]}(x,s,\xi,0)u_0(\xi)-\partial_t\Gamma_{[\overline{v}]}(x,s,\xi,0)\overline{u}_0(\xi)dsd\xi \\
\nonumber
&=\int_{t'}^t\int_\R(\partial_t\Gamma_{[v]}-\partial_t\Gamma_{[\overline{v}]})(x,s,\xi,0)u_0(\xi)+\partial_t\Gamma_{[\overline{v}]}(x,s,\xi,0)(u_0-\overline{u}_0)(\xi)d\xi ds\\
\nonumber
&=\int_{t'}^t\int_\R(\partial_t\Gamma_{[v]}-\partial_t\Gamma_{[\overline{v}]})(x,s,\xi,0)(u_0(\xi)-u_0(x))d\xi ds\\
\nonumber
&+\int_{t'}^t\int_\R\partial_t\Gamma_{[\overline{v}]}(x,s,\xi,0)[(u_0-\overline{u}_0)(\xi)-(u_0-\overline{u}_0)(x)]d\xi ds.
\end{align*}
Thence, from Lemma \ref{lem1.4} and estimate \eqref{eq0.10}, we obtain
\begin{align}
\label{eq1.17}
&|V(x,t)-V(x,t')|\\
\nonumber
\leq&\int_{t'}^t\int_\R|\partial_t\Gamma_{[v]}-\partial_t\Gamma_{[\overline{v}]})(x,s,\xi,0)||u_0(\xi)-u_0(x)|d\xi ds\\
\nonumber
&+\int_{t'}^t\int_\R|\partial_t\Gamma_{[\overline{v}]}(x,s,\xi,0)||(u_0-\overline{u}_0)(\xi)-(u_0-\overline{u}_0)(x)|d\xi ds\\
\nonumber
\leq &\int_{t'}^t\int_\R K(\|v-\overline{v}\|_{1,\frac{1}{2}}+\|v-\overline{v}\|^\beta_{1,\frac{1}{2}})\|u_0\|_1|x-\xi|(\frac{1}{|x-\xi|^{\gamma(1-\beta)}s^{\frac{2-\gamma(1-\beta)}{2}}}\\
\nonumber
&\hspace{7.7cm} +\frac{1}{s^{\frac{3}{2}}})\mbox{e}^{-C\frac{(x-\xi)^2}{s}}d\xi ds\\
\nonumber
&+\int_{t'}^t\int_\R\frac{K\|u_0-\overline{u}_0\|_1|x-\xi|}{s^{\frac{3}{2}}}\mbox{e}^{-C\frac{(x-\xi)^2}{s}}d\xi ds\\
\nonumber
\leq& \int_{t'}^t\int_\R K(\|v-\overline{v}\|_{1,\frac{1}{2}}+\|v-\overline{v}\|^\beta_{1,\frac{1}{2}})\|u_0\|_1(\frac{T^{\frac{1}{2}}}{s}+\frac{1}{s})\mbox{e}^{-C\frac{(x-\xi)^2}{s}}d\xi ds\\
\nonumber
&+\int_{t'}^t\int_\R\frac{K\|u_0-\overline{u}_0\|_1}{s}\mbox{e}^{-C\frac{(x-\xi)^2}{s}}d\xi ds\\
\nonumber
\leq& K(\|v-\overline{v}\|_{1,\frac{1}{2}}+\|v-\overline{v}\|^\beta_{1,\frac{1}{2}}+\|u_0-\overline{u}_0\|_1)\int_{t'}^t\int_\R\frac{1}{s}\mbox{e}^{-C\frac{(x-\xi)^2}{s}}d\xi ds\\
\nonumber
\leq& K(\|v-\overline{v}\|_{1,\frac{1}{2}}+\|v-\overline{v}\|^\beta_{1,\frac{1}{2}}+\|u_0-\overline{u}_0\|_1)\int_{t'}^t\frac{1}{s^{\frac{1}{2}}}ds\\
\nonumber
\leq& K(\|v-\overline{v}\|_{1,\frac{1}{2}}+\|v-\overline{v}\|^\beta_{1,\frac{1}{2}}+\|u_0-\overline{u}_0\|_1)(t-t')^{\frac{1}{2}},
\end{align}
where $K=K(R,\lambda,T,\|u_0\|_1)$.

From estimates \eqref{eq1.15}, \eqref{eq1.16} and \eqref{eq1.17}, we have
\begin{equation}
\label{eq1.18}
\|V\|_{1,\frac{1}{2}}\leq  K(R,\lambda,T,\|u_0\|_1)(\|v-\overline{v}\|_{1,\frac{1}{2}}+\|v-\overline{v}\|^\beta_{1,\frac{1}{2}}+\|u_0-\overline{u}_0\|_1),
\end{equation}
with a new $K$.

Similarly, we can estimate $W$:
\begin{align*}
W(x,t)&=\int_0^t\int_\R\Gamma_{[v]}(x,t,\xi,\tau)f(\xi,\tau)-\Gamma_{[\overline{v}]}(x,t,\xi,\tau)\overline{f}(\xi,\tau)d\xi d\tau\\
\nonumber
&=\int_0^t\int_\R(\Gamma_{[v]}-\Gamma_{[\overline{v}]})(x,t,\xi,\tau)f(\xi,\tau)+\Gamma_{[\overline{v}]}(x,t,\xi,\tau)(f-\overline{f})(\xi,\tau)d\xi d\tau.
\end{align*}
Hence, using Lemma \ref{lem1.4} and \eqref{eq0.10}, we have
\begin{align}
\label{eq1.19}
|W(x,t)|&\leq\int_0^t\int_\R|(\Gamma_{[v]}-\Gamma_{[\overline{v}]})(x,t,\xi,\tau)f(\xi,\tau)|+|\Gamma_{[\overline{v}]}(x,t,\xi,\tau)(f-\overline{f})(\xi,\tau)|d\xi d\tau\\
\nonumber
&\leq\int_0^t\int_\R K(\|v-\overline{v}\|_{1,\frac{1}{2}}\|f\|_\infty+\|f-\overline{f}\|_\infty)\frac{1}{(t-\tau)^{\frac{1}{2}}}\mbox{e}^{-C\frac{(x-\xi)^2}{t-\tau}}d\xi d\tau\\
\nonumber
&\leq K(R,\lambda,T)(\|f\|_\infty+1)T(\|v-\overline{v}\|_{1,\frac{1}{2}}+\|f-\overline{f}\|_\infty).
\end{align}
Besides,
\begin{align}
\label{eq1.20}
|\partial_xW(x,t)|&\leq\int_0^t\int_\R|(\partial_x\Gamma_{[v]}-\partial_x\Gamma_{[\overline{v}]})(x,t,\xi,\tau)f(\xi,\tau)|+|\partial_x\Gamma_{[\overline{v}]}(x,t,\xi,\tau)(f-\overline{f})(\xi,\tau)|d\xi d\tau\\
\nonumber
&\leq\int_0^t\int_\R K(\|v-\overline{v}\|_{1,\frac{1}{2}}\|f\|_\infty+\|f-\overline{f}\|_\infty)\frac{1}{(t-\tau)}\mbox{e}^{-C\frac{(x-\xi)^2}{t-\tau}}d\xi d\tau\\
\nonumber
&\leq K(R,\lambda,T)(\|f\|_\infty+1)T^{\frac{1}{2}}(\|v-\overline{v}\|_{1,\frac{1}{2}}+\|f-\overline{f}\|_\infty)
\end{align}

To prove the H\"older continuity with respect to $t$, we write
\begin{align*}
&W(x,t)-W(x,t')\\
\nonumber
=&\int_{0}^t\int_\R[(\Gamma_{[v]}(x,t,\xi,\tau)]f(\xi,\tau)-\Gamma_{[\overline{v}]}(x,t,\xi,\tau)\overline{f}(\xi,\tau)d\xi d\tau\\
\nonumber
&-\int^{t'}_0\int_\R[(\Gamma_{[v]}(x,t',\xi,\tau)]f(\xi,\tau)-\Gamma_{[\overline{v}]}(x,t,\xi',\tau)\overline{f}(\xi,\tau)d\xi d\tau\\
\nonumber
=&\int_{t'}^t\int_\R(\Gamma_{[v]}(x,t,\xi,\tau)]f(\xi,\tau)-\Gamma_{[\overline{v}]}(x,t,\xi,\tau)\overline{f}(\xi,\tau))d\xi d\tau\\
\nonumber
+&\int_0^{t'}\int_\R[(\Gamma_{[v]}(x,t,\xi,\tau)-\Gamma_{[v]}(x,t',\xi,\tau))f(\xi,\tau)-(\Gamma_{[\overline{v}]}(x,t,\xi,\tau)-\Gamma_{[\overline{v}]}(x,t',\xi,\tau))\overline{f}(\xi,\tau)]d\xi d\tau\\
\nonumber
=&\int_{t'}^t\int_\R((\Gamma_{[v]}-\Gamma_{[\overline{v}]})(x,t,\xi,\tau)]f(\xi,\tau)+\Gamma_{[\overline{v}]}(x,t,\xi,\tau)(f-\overline{f})(\xi,\tau))d\xi d\tau\\
\nonumber
+&\int_{t'-\epsilon}^{t'}\int_\R[(\Gamma_{[v]}(x,t,\xi,\tau)-\Gamma_{[v]}(x,t',\xi,\tau))f(\xi,\tau)-(\Gamma_{[\overline{v}]}(x,t,\xi,\tau)-\Gamma_{[\overline{v}]}(x,t',\xi,\tau))\overline{f}(\xi,\tau)]d\xi d\tau\\
\nonumber
&+\int_0^{t'-\epsilon}\int_\R\int_{t'}^t[\partial_t\Gamma_{[v]}(x,\xi,s,\tau)f(\xi,\tau)-\partial_t\Gamma_{[\overline{v}]}(x,\xi,s,\tau)\overline{f}(\xi,\tau)]dsd\xi d\tau,\\
\nonumber
\equiv& W_1+W_2+W_3
\end{align*}
where $0<\epsilon<t'$ is arbitrary.
Using Lemma \ref{lem1.4} and \eqref{eq1.10}, we estimate
\begin{align}
\label{eqw1}
|W_1|&\leq\int_{t'}^t\int_\R (K\|v-\overline{v}\|_{1,\frac{1}{2}}\|f\|_\infty+K\|f-\overline{f}\|_\infty)\frac{\mbox{e}^{-C\frac{(x-\xi)^2}{t-\tau}}}{(t-\tau)^{\frac{1}{2}}}d\xi d\tau\\
\nonumber
&\leq K(R,\lambda,T)(\|f\|_\infty+1)T^{\frac{1}{2}}(\|v-\overline{v}\|_{1,\frac{1}{2}}+\|f-\overline{f}\|_\infty)(t-t')^{\frac{1}{2}}
\end{align}
Regarding $W_2$, we apply \eqref{eq1.10} to get
\begin{align}
 \label{eqw2}
 |W_2|&\leq\int_{t'-\epsilon}^{t'}\int_\R\left(\frac{K}{(t-\tau)^{\frac{1}{2}}}\mbox{e}^{-C\frac{(x-\xi)^2}{t-\tau}}+\frac{K}{(t'-\tau)^{\frac{1}{2}}}\mbox{e}^{-C\frac{(x-\xi)^2}{t'-\tau}} \right)(\|f\|_\infty+\|\overline{f}\|_\infty) d\xi d\tau\\
 \nonumber
 &\leq K(\|f\|_\infty+\|\overline{f}\|_\infty)\epsilon.
\end{align}
The term $W_3$ can be estimated using Remark \ref{ob0.1} as follows:
\begin{align*}
W_3&=\int_0^{t'-\epsilon}\int_\R\int_{t'}^t[\partial_t\Gamma_{[v]}(x,\xi,s,\tau)f(\xi,\tau)-\partial_t\Gamma_{[\overline{v}]}(x,\xi,s,\tau)\overline{f}(\xi,\tau)]dsd\xi d\tau\\
 \nonumber
 &=\int_0^{t'-\epsilon}\int_{t'}^t\int_\R[(\partial_t\Gamma_{[v]}-\partial_t\Gamma_{[\overline{v}]})(x,\xi,s,\tau)(f(\xi,\tau)-f(x,\tau))\\
 \nonumber
 &\hspace{2.5cm}+\partial_t\Gamma_{[\overline{v}]}(x,\xi,s,\tau)((f-\overline{f})(\xi,\tau)-(f-\overline{f})(x,\tau))]d\xi ds d\tau
\end{align*}
Now, applying Lemma \ref{lem1.4} and \eqref{eq1.10}, and writing $K_1= K(\|v-\overline{v}\|_{1,\frac{1}{2}}+\|v-\overline{v}\|_{1,\frac{1}{2}}^\beta)\|f\|_{1,\frac{1}{2}}$, it follows that
\begin{align}
\label{eqw3}
|W_3|&\leq \int_0^{t'-\epsilon}\int_{t'}^t\int_\R K_1( \frac{1}{|x-\xi|^{\gamma(1-\beta)}(s-\tau)^{\frac{2-\gamma(1-\beta)}{2}}}+\frac{1}{(s-\tau)^{\frac{3}{2}}})\mbox{e}^{-C\frac{(x-\xi)^2}{s-\tau}}|x-\xi|\\
\nonumber
&\hspace{2.5cm}+\frac{K}{(s-\tau)^{\frac{3}{2}}}\mbox{e}^{-C\frac{(x-\xi)^2}{s-\tau}}\|f-\overline{f}\|_{1,\frac{1}{2}}|x-\xi|d\xi ds d\tau\\
\nonumber
&\leq K(1+\|f\|_{1,\frac{1}{2}})[\|v-\overline{v}\|_{1,\frac{1}{2}}+\|v-\overline{v}\|_{1,\frac{1}{2}}^\beta\\
\nonumber
&+\|f-\overline{f}\|_{1,\frac{1}{2}}]\int_0^{t'-\epsilon}\int_{t'}^t\int_\R ( \frac{|x-\xi|^{1-\gamma(1-\beta)}}{(s-\tau)^{\frac{2-\gamma(1-\beta)}{2}}}+\frac{|x-\xi|}{(s-\tau)^{\frac{3}{2}}})\mbox{e}^{-C\frac{(x-\xi)^2}{s-\tau}}d\xi ds d\tau\\
\nonumber
&\leq K(1+\|f\|_{1,\frac{1}{2}})[\|v-\overline{v}\|_{1,\frac{1}{2}}+\|v-\overline{v}\|_{1,\frac{1}{2}}^\beta\\
\nonumber
&+\|f-\overline{f}\|_{1,\frac{1}{2}}]\int_0^{t'-\epsilon}\int_{t'}^t\int_\R (\frac{1}{(s-\tau)^{\frac{1}{2}}}+\frac{1}{s-\tau})\mbox{e}^{-C\frac{(x-\xi)^2}{s-\tau}}d\xi ds d\tau\\
\nonumber
&\leq K(1+\|f\|_{1,\frac{1}{2}})[\|v-\overline{v}\|_{1,\frac{1}{2}}+\|v-\overline{v}\|_{1,\frac{1}{2}}^\beta\\
\nonumber
&+\|f-\overline{f}\|_{1,\frac{1}{2}}](T^{\frac{1}{2}}+1)\int_0^{t'-\epsilon}\int_{t'}^t\int_\R \frac{1}{s-\tau}\mbox{e}^{-C\frac{(x-\xi)^2}{s-\tau}}d\xi ds d\tau\\
\nonumber
&\leq K(1+\|f\|_{1,\frac{1}{2}})[\|v-\overline{v}\|_{1,\frac{1}{2}}+\|v-\overline{v}\|_{1,\frac{1}{2}}^\beta+\|f-\overline{f}\|_{1,\frac{1}{2}}]\int_0^{t'-\epsilon}\int_{t'}^t \frac{1}{(s-\tau)^{\frac{1}{2}}} ds d\tau\\
\nonumber
&\leq K(1+\|f\|_{1,\frac{1}{2}})[\|v-\overline{v}\|_{1,\frac{1}{2}}+\|v-\overline{v}\|_{1,\frac{1}{2}}^\beta+\|f-\overline{f}\|_{1,\frac{1}{2}}]T(t-t')^{\frac{1}{2}},
\end{align}
where for the last inequality we used that \eqref{eqw2} is true for all $\epsilon\in (0,t')$. \
From \eqref{eqw1}, \eqref{eqw2} and \eqref{eqw3}, we conclude that
\begin{equation}
\label{eq1.21}
\begin{array}{rl}
     &|W(x,t)-W(x,t')|\\
\leq & K(\|f\|_{1,\frac{1}{2}}+1)(\|v-\overline{v}\|_{1,\frac{1}{2}}+\|v-\overline{v}\|_{1,\frac{1}{2}}^\beta +\|f-\overline{f}\|_{1,\frac{1}{2}})T^{\frac{1}{2}}(t-t')^{\frac{1}{2}},
\end{array}
\end{equation}
where $K=K(R,\lambda,T)$. 
 It follows from \eqref{eq1.19}, \eqref{eq1.20} and \eqref{eq1.21} that
\begin{equation}
\label{eq1.22}
\|W\|_{1,\frac{1}{2}}\leq K(R,\lambda,T)T^{\frac{1}{2}}(\|f\|_{1,\frac{1}{2}}+1)(\|v-\overline{v}\|_{1,\frac{1}{2}}+\|v-\overline{v}\|_{1,\frac{1}{2}}^\beta +\|f-\overline{f}\|_{1,\frac{1}{2}}).
\end{equation}
Finally, from \eqref{eq1.18} and \eqref{eq1.22}, we have
\begin{equation}
\label{eq1.23}
\begin{array}{rl}
     \|u-\overline{u}\|_{1,\frac{1}{2}}&\leq \  \|V\|_{1,\frac{1}{2}}+\|W\|_{1,\frac{1}{2}}\\
&\leq \
K(\|v-\overline{v}\|_{1,\frac{1}{2}}+\|v-\overline{v}\|_{1,\frac{1}{2}}^\beta+\|u_0-\overline{u}_0\|_1\\  & \ \ \ \ \ + \ T^{\frac{1}{2}}(\|f\|_{1,\frac{1}{2}}+1)(\|f-\overline{f}\|_{1,\frac{1}{2}}+\|v-\overline{v}\|_{1,\frac{1}{2}}+\|v-\overline{v}\|_{1,\frac{1}{2}}^\beta)),
\end{array}
\end{equation}
where $K=K(R,\lambda,T,\|u_0\|_1)$. 
\ \ \end{proof}

In particular we have the following estimate for a solution of \eqref{eq1.14}
\begin{corollary}
\label{cor1.1}
In the same conditions of Theorem \ref{teo1.1}, if $u$ is a solution of \eqref{eq1.14}
then
\begin{equation}
\|u\|_{1,\frac{1}{2}}\leq
 K(R,\lambda,T,\|u_0\|_1)(\|u_0\|_1+T^{\frac{1}{2}}(\|f\|_{1,\frac{1}{2}}+1)\|f\|_{1,\frac{1}{2}}),
  \end{equation}
where $K=K(R,\lambda,T,\|u_0\|_1)$. 
\end{corollary}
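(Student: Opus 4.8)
The plan is to obtain the estimate as an immediate specialization of Theorem \ref{teo1.1}, taking the ``barred'' data to be degenerate. Concretely, I would fix any $\beta\in(0,1)$ and apply Theorem \ref{teo1.1} with $\overline{v}=v$, $\overline{f}\equiv 0$ and $\overline{u}_0\equiv 0$. The companion solution is then $\overline{u}\equiv 0$: indeed $\mathcal{L}_{[v]}0=0$ in $\R\times(0,T]$ and $0\big|_{t=0}=0$, so by Theorem \ref{teo0.1} the zero function is the (unique, bounded, at most exponentially growing) solution of the corresponding Cauchy problem, hence it is the function $\overline{u}$ to which Theorem \ref{teo1.1} refers.

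Before invoking that theorem I would verify that this choice is admissible under its hypotheses: $\overline{v}=v=(a,b,0)\in B(R,\lambda,1)$ by assumption, $\overline{f}\equiv 0\in C^{1,\frac12}(\Omega_T)$, and $\overline{u}_0\equiv 0$ is Lipschitz continuous and bounded on $\R$; so all the structural requirements of Theorem \ref{teo1.1} are met and its estimate applies. With these choices one has $\|v-\overline{v}\|_{1,\frac12}=0$, $\|u_0-\overline{u}_0\|_1=\|u_0\|_1$ and $\|f-\overline{f}\|_{1,\frac12}=\|f\|_{1,\frac12}$, while the left-hand side reads $\|u-\overline{u}\|_{1,\frac12}=\|u\|_{1,\frac12}$. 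Substituting into the inequality of Theorem \ref{teo1.1}, every term carrying a factor $\|v-\overline{v}\|_{1,\frac12}$ or $\|v-\overline{v}\|^\beta_{1,\frac12}$ drops out, and what remains is precisely
\[
\|u\|_{1,\frac12}\ \le\ K\bigl(\|u_0\|_1+T^{\frac12}(\|f\|_{1,\frac12}+1)\,\|f\|_{1,\frac12}\bigr),
\]
with $K=K(R,\lambda,T,\|u_0\|_1)$, which is the assertion.

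Since this is a direct corollary, there is essentially no obstacle; the only point meriting a word of care --- and a minor one --- is checking that the degenerate data $(\overline{v},\overline{f},\overline{u}_0)=(v,0,0)$ genuinely satisfy the hypotheses of Theorem \ref{teo1.1} and that $\overline{u}\equiv 0$ is indeed the solution associated with them, so that the estimate of Theorem \ref{teo1.1} is legitimately applicable.
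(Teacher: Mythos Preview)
Your proposal is correct and follows essentially the same approach as the paper: both specialize Theorem \ref{teo1.1} with $\overline{v}=v$ so that the $\|v-\overline{v}\|$-terms vanish, leaving exactly the stated bound. The only cosmetic difference is that the paper chooses $\overline{f}=2f$, $\overline{u}_0=2u_0$ (so $\overline{u}=2u$ by linearity), whereas you take $\overline{f}=0$, $\overline{u}_0=0$ (so $\overline{u}=0$); either choice makes $\|u-\overline{u}\|_{1,\frac12}=\|u\|_{1,\frac12}$, $\|u_0-\overline{u}_0\|_1=\|u_0\|_1$, and $\|f-\overline{f}\|_{1,\frac12}=\|f\|_{1,\frac12}$.
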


\begin{proof}The proof follows from Theorem \ref{teo1.1} by taking
 $\overline{v}=v$, $\overline{f}=2f$ and $\overline{u}_0=2u_0$.~\end{proof}

\section{Local solution}
\label{local solution}

In this section we prove Theorem \ref{local}. For simplicity we shall write $f$ and $f_i$ instead of $\tilde{f}$ and $\tilde{f}_i$, respectively. 

Consider the operator $\mathcal A$ given in \eqref{eq4}. 
In the lemma below we construct an invariant set for $\mathcal A$.

\begin{lemma}
\label{lem2.1}
Let $R_i=\frac{\lambda_i+c_i}{a_i}\left(1+\frac{2b_i}{a_i}\|y_{i,0}\|_1\right)$, $K(R_i,\frac{\lambda_i}{a_i+b_i\|y_{i,0}\|_\infty},T,\|u_{i,0}\|_1)$ be the constant given in the Corollary \ref{cor1.1}, $K_i=\sup_{0\leq T\leq 1}K(R_i,\frac{\lambda_i}{a_i+b_i\|y_{i,0}\|_\infty},T,\|u_{i,0}\|_1) $,  $M_i>K_i\|u_{i,0}\|_1$ and $\Sigma=\{(u_1,u_2)\in \left(C^{1,\frac{1}{2}}(\R\times[0,T])\right)^2;\, \|u_i\|_{1,\frac{1}{2}}\leq M_i\}$. 
Then ${\mathcal A}(\Sigma)\subset\Sigma$, if $T>0$ is sufficiently small.
\end{lemma}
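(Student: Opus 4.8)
The plan is to fix $u=(u_1,u_2)\in\Sigma$, set $w=(w_1,w_2):=\mathcal{A}(u)$ and $y_i:=y_{i,0}(x)\mbox{e}^{-A_i\int_0^tf(u_i)d\tau}$, so that $w_i$ solves the linear parabolic problem $\mathcal{L}_{[v_i]}w_i=f_i(y_i,u_1,u_2)$ with $w_i(\cdot,0)=u_{i,0}$ and $v_i:=(\alpha_i(y_i),\beta_i(y_i),0)$, and then to argue in three steps: (i) show that $v_i\in B(R_i,\lambda_i',1)$ with $\lambda_i':=\lambda_i/(a_i+b_i\|y_{i,0}\|_\infty)$, uniformly for $u\in\Sigma$, once $T$ is small; (ii) show that $f_i(y_i,u_1,u_2)$, as a function of $(x,t)$, lies in $C^{1,\frac12}(\R\times[0,T])$ with norm bounded by a constant depending only on $M_1,M_2$ and the parameters, uniformly for $0<T\le1$; (iii) apply Corollary \ref{cor1.1} and shrink $T$.

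For step (i) I would first note that $f=\tilde f\ge0$, so $0\le y_i\le\|y_{i,0}\|_\infty$, and that $\tilde f$ is globally Lipschitz with a constant $L_f$ depending only on $E$ (since $\sup_{s>0}\tfrac{E}{s^2}\mbox{e}^{-E/s}<\infty$ and $\tilde f'\equiv0$ on $s\le0$). Hence $\int_0^tf(u_i)d\tau$ is bounded by $T$, Lipschitz in $t$ with constant $\le1$, and Lipschitz in $x$ with constant $\le TL_fM_i$ (using $\|u_i\|_{1,\frac12}\le M_i$); consequently $y_i$ is bounded by $\|y_{i,0}\|_\infty$, Lipschitz in $t$ with constant $\le A_i\|y_{i,0}\|_\infty$, and Lipschitz in $x$ with constant $\le\|y_{i,0}\|_1+A_i\|y_{i,0}\|_\infty TL_fM_i$. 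Since on $0\le y\le\|y_{i,0}\|_\infty$ one has $\lambda_i'\le\alpha_i(y)\le\lambda_i/a_i$ and $|\alpha_i'(y)|\le\lambda_ib_i/a_i^2$, and a Lipschitz function on $[0,T]$ has $t$-H\"older-$\tfrac12$ seminorm at most $T^{1/2}$ times its Lipschitz constant, this gives $\|\alpha_i(y_i)\|_{1,\frac12}\le\frac{\lambda_i}{a_i}+\frac{\lambda_ib_i}{a_i^2}\|y_{i,0}\|_1+C(M_i)T^{1/2}$, and similarly $\|\beta_i(y_i)\|_{1,\frac12}\le\frac{c_i}{a_i}+\frac{c_ib_i}{a_i^2}\|y_{i,0}\|_1+C(M_i)T^{1/2}$. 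Because $\lambda_i,c_i<\lambda_i+c_i$, the $T$-independent parts of both bounds are strictly below $R_i=\frac{\lambda_i+c_i}{a_i}\bigl(1+\frac{2b_i}{a_i}\|y_{i,0}\|_1\bigr)$, and also $\lambda_i'<\frac{\lambda_i+c_i}{a_i}\le R_i$; so for $T$ below a threshold depending only on $M_1,M_2$ and the parameters, $v_i\in B(R_i,\lambda_i',1)$. (This is precisely why $R_i$ carries $\lambda_i+c_i$ and the factor $2$.)

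For step (ii) I would use the explicit form $f_i(y_i,u_1,u_2)=\frac{b_iA_iu_i+d_i}{a_i+b_iy_i}\,y_i\,\tilde f(u_i)+(-1)^iq\,\frac{u_1-u_2}{a_i+b_iy_i}$: it is a sum of products of bounded, Lipschitz-in-$x$, H\"older-$\tfrac12$-in-$t$ functions ($u_i$, $y_i$, $\tilde f(u_i)$, and $(a_i+b_iy_i)^{-1}$, the last bounded below since $a_i+b_iy_i\ge a_i>0$), so the product and quotient rules, together with $\|u_i\|_{1,\frac12}\le M_i$, the bounds on $y_i$ from step (i), and the Lipschitz bound for $\tilde f$, give $\|f_i(y_i,u_1,u_2)\|_{1,\frac12}\le C_i$ for all $0<T\le1$, with $C_i=C_i(M_1,M_2,\text{parameters})$. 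In particular $f_i(y_i,u_1,u_2)$ is locally H\"older in $x$ uniformly in $t$ and has sub-exponential growth, and $u_{i,0}$ is bounded, so Theorem \ref{teo0.1} supplies the unique solution $w_i\in C^{2,1}(\R\times(0,T])\cap C(\R\times[0,T])$ of exponential growth, showing $\mathcal{A}$ is well defined on $\Sigma$.

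Finally, for step (iii), Corollary \ref{cor1.1} applied with $v=v_i$, $f=f_i(y_i,u_1,u_2)$, $u_0=u_{i,0}$, together with $K(R_i,\lambda_i',T,\|u_{i,0}\|_1)\le K_i$ for $T\le1$, yields $\|w_i\|_{1,\frac12}\le K_i\|u_{i,0}\|_1+K_iT^{1/2}(C_i+1)C_i$. Since $M_i>K_i\|u_{i,0}\|_1$ by hypothesis, picking $T\le1$ small enough that $K_iT^{1/2}(C_i+1)C_i<M_i-K_i\|u_{i,0}\|_1$ for $i=1,2$ --- and no larger than the threshold from step (i) --- gives $\|w_i\|_{1,\frac12}<M_i$, i.e. $\mathcal{A}(u)\in\Sigma$; as every threshold depends only on $M_1,M_2$ and the parameters, one $T$ works for all $u\in\Sigma$. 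I expect step (i) to be the main obstacle: keeping the H\"older norms of the variable coefficients strictly below the explicit constant $R_i$ uniformly over $\Sigma$, which relies on the $x$-Lipschitz constant of $y_i$ approaching that of $y_{i,0}$ as $T\to0$ and on the global Lipschitz bound for the truncated Arrhenius term $\tilde f$; the rest is routine bookkeeping.
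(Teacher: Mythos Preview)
Your proposal is correct and follows essentially the same three-step strategy as the paper's proof: bound $y_i$ and hence the coefficients $v_i=(\alpha_i(y_i),\beta_i(y_i),0)$ so that $v_i\in B(R_i,\lambda_i',1)$ for $T$ small, bound $\|f_i(y_i,u_1,u_2)\|_{1,\frac12}$ by a constant depending only on $M_1,M_2$ and the data, and then invoke Corollary~\ref{cor1.1} and choose $T$ small. The only cosmetic difference is that the paper first proves the intermediate bound $\|y_i\|_{1,\frac12}\le 2\|y_{i,0}\|_1$ for $T$ small and deduces $\|\alpha_i(y_i)\|_{1,\frac12}+\|\beta_i(y_i)\|_{1,\frac12}\le R_i$ from it (so that each norm is strictly below $R_i$), whereas you estimate $\|\alpha_i(y_i)\|_{1,\frac12}$ and $\|\beta_i(y_i)\|_{1,\frac12}$ directly, isolating the $T$-independent parts $\tfrac{\lambda_i}{a_i}(1+\tfrac{b_i}{a_i}\|y_{i,0}\|_1)$ and $\tfrac{c_i}{a_i}(1+\tfrac{b_i}{a_i}\|y_{i,0}\|_1)$ and observing these are each strictly below $R_i$; both routes are equivalent.
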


\begin{proof}
Given $(u_1,u_2)\in\Sigma$ we get $y_i\equiv y_i(u_i)$ explicitly solving
${(y_i)_t}=-A_iy_if(u_i)$, i.e.
\begin{equation}
\label{eq2.6}
  y_i(x,t)=y_{i,0}(x)\mbox{e}^{-A_i \int_0^tf(u_i(x,s))ds}
\end{equation}
and, so, $0\leq y_i\leq\|y_{i,0}\|_\infty$, and else,
\begin{align}
\label{eq2.7}
\|y_i\|_{1,\frac{1}{2}}&\leq \| y_{i,0}\|_1\left\|\mbox{e}^{-A_i \int_0^tf(u_i(x,s))ds}\right\|_{1,\frac{1}{2}}\\
\nonumber
&={\|y_{i,0}\|}_1(\sup_{(x,t)\in\Omega_T}(\mbox{e}^{-A_i\int_0^tf(u_i(x,s))ds})\\
\nonumber
&\hspace{2.3cm}+\sup_{(x,t),(\overline{x},\overline{t})\in\Omega_T}(\frac{|\mbox{e}^{-A_i\int_0^tf(u_i(x,s))ds}-\mbox{e}^{-A_i\int_0^{\overline{t}}f(u_i(\overline{x},s))ds}|}{|x-\overline{x}|+|t-\overline{t}|^{\frac{1}{2}}}))\\
\nonumber
&\leq{\|y_{i,0}\|}_1(1+\sup_{(x,t),(\overline{x},\overline{t})\in\Omega_T}(\frac{|\mbox{e}^{-A_i\int_0^tf(u_i(x,s))ds}-\mbox{e}^{-A_i\int_0^{t}f(u_i(\overline{x},s))ds}|}{|x-\overline{x}|})\\
\nonumber
&\hspace{2.3cm}+\sup_{(x,t),(\overline{x},\overline{t})\in\Omega_T}(\frac{|\mbox{e}^{-A_i\int_0^tf(u_i(\overline{x},s))ds}-\mbox{e}^{-A_i\int_0^{\overline{t}}f(u_i(\overline{x},s))ds}|}{|t-\overline{t}|^{\frac{1}{2}}}))\\
\nonumber
&\leq{\|y_{i,0}\|}_1(1+\sup_{(x,t),(\overline{x},\overline{t})\in\Omega_T}(A_i\int_0^t\frac{|f(u_i(x,s)-f(u_i(\overline{x},s)|}{|x-\overline{x}|}ds)\\
\nonumber
&\hspace{2.3cm}+\sup_{(x,t),(\overline{x},\overline{t})\in\Omega_T}(\frac{A_i|\int_{\overline{t}}^tf(u_i(\overline{x},s))ds|}{|t-\overline{t}|^{\frac{1}{2}}}))\\
\nonumber
&\leq{\|y_{i,0}\|}_1(1+\sup_{(x,t),(\overline{x},\overline{t})\in\Omega_T}(A_i\int_0^t\|f'\|_\infty\|u_i\|_{1,\frac{1}{2}}ds)\\
&\hspace{3.3cm}+\sup_{(x,t),(\overline{x},\overline{t})\in\Omega_T}(A_i\|f\|_\infty|t-\overline{t}|^{\frac{1}{2}}))\\
\nonumber
&\leq  \|y_{i,0}\|_1(1+TA_i\|u_i\|_{1,\frac{1}{2}}\|f'\|_\infty+T^{\frac{1}{2}}A_i)\\
\nonumber
&\leq  \| y_{i,0}\|_1(1+TA_iM_i\|f'\|_\infty+T^{\frac{1}{2}}A_i)\leq  2\| y_{i,0}\|_1,
\end{align}
where, for the last inequality, we took $T>0$ sufficiently small such that\break\hfill 
$T^{\frac{1}{2}}+TA_iM_i\|f'\|_\infty\leq 1$. Furthermore, 
$v_i\equiv v_i(u_i)=\left(\frac{\lambda_i}{a_i+b_iy_i(u_i)},\frac{c_i}{a_i+b_iy_i(u_i)},0\right)
\in B(R_i,\frac{\lambda_i}{a_i+b_i\|y_{i,0}\|_\infty},1)$;
see the definition of the set $B(R,\lambda,\alpha)$ at the beginning of section \ref{dependence_parameters}. Indeed,
\begin{align*}
&\|\frac{\lambda_i}{a_i+b_iy_i(u_i)}\|_{1,\frac{1}{2}}\\
=&\sup_{(x,t)\in\Omega_T}|\frac{\lambda_i}{a_i+b_iy_i(u_i)}|+\sup_{(x,t),(\overline{x},\overline{t})\in\Omega_T}\frac{|\frac{\lambda_i}{a_i+b_iy_i(u_i(x,t))}-\frac{\lambda_i}{a_i+b_iy_i(u_i(\overline{x},\overline{t}))}|}{|x-\overline{x}|+|t-\overline{t}|^{\frac{1}{2}}}\\
 \nonumber
 \leq& \frac{\lambda_i}{a_i}+\frac{b_i\lambda_i}{a_i^2}(\sup_{(x,t),(\overline{x},\overline{t})\in\Omega_T}\frac{|y_i(u_i(x,t))-y_i(u_i(\overline{x},\overline{t}))|}{|x-\overline{x}|+|t-\overline{t}|^{\frac{1}{2}}})\\
 \nonumber
 \leq& \frac{\lambda_i}{a_i}+\frac{b_i\lambda_i}{a_i^2}\|y_i\|_{1,\frac{1}{2}}\leq \frac{\lambda_i}{a_i}+\frac{2b_i\lambda_i\|y_{i,0}\|_1}{a_i^2},
 \end{align*}
where we used \eqref{eq2.7}.
Analogously, we can verify that $\|\frac{c_i}{a_i+b_iy_i(u_i)}\|_{1,\frac{1}{2}}\leq \frac{c_i}{a_i}+\frac{2b_ic_i\|y_{i,0}\|_1}{a_i^2}$ and, so,
$$
\|\frac{\lambda_i}{a_i+b_iy_i(u_i)}\|_{1,\frac{1}{2}}+\|\frac{c_i}{a_i+b_iy_i(u_i)}\|_{1,\frac{1}{2}}\leq \frac{\lambda_i+c_i}{a_i}(1+\frac{2b_i\|y_{i,0}\|_1}{a_i})=R_i
$$
Adding to the fact that $0<\frac{\lambda_i}{a_i+b_i\|y_{i,0}\|_\infty}\leq\frac{\lambda_i}{a_i+b_iy_i(u_i)}$, we conclude that \\ $v_i(u_i)\in B\left(R_i,\frac{\lambda_i}{a_i+b_i\|y_{i,0}\|_\infty},1\right)$.

From the above, the hypotesis of Theorem \ref{teo0.1} are satisfied. Therefore, the problem
\begin{equation}
\left\{\begin{array}{l}
\mathcal{L}_{[v_i(u_i)]}(w_i)=f_i(y_i,u_1,u_2),\quad\mbox{in}\quad \R\times(0,T],\\
w_i(x,0)=u_{i,0}(x),\quad x\in\R,
\end{array}\right.
\end{equation}
has a unique solution with an exponetial growth in the space $C^{2,1}(\R\times(0,T])\cap C(\R\times[0,T])$.\\

From Corollary \ref{cor1.1}, we have
\begin{align*}
&\|w_i\|_{1,\frac{1}{2}}\\
\leq &K(R_i,\frac{\lambda_i}{a_i+b_i\|y_{i,0}\|_\infty},T,\|u_{i,0}\|_1)[\|u_{i,0}\|_1\\
&+T^{\frac{1}{2}}(\|f_i(y_i,u_1,u_2)\|_{1,\frac{1}{2}}+1)\|F_i(y_i,u_1,u_2)\|_{1,\frac{1}{2}}]\\
\nonumber
\leq &K_i\left[\|u_{i,0}\|_1+T^{\frac{1}{2}}(\|f_i(y_i,u_1,u_2)\|_{1,\frac{1}{2}}+1)\|F_i(y_i,u_1,u_2)\|_{1,\frac{1}{2}}\right]\\
\nonumber
\leq &K_i[\|u_{i,0}\|_1+T^{\frac{1}{2}}\overline{K}(M_1,M_2,\|y_{i,0}\|_1)]\leq M_i,
\end{align*}
provided that $T$ is sufficiently small. \ \ \end{proof}

\subsection{Proof of Theorem \ref{local}}
\label{local proof}

Let $T$, $\mathcal A$ and $\Sigma$ be as in Lemma \ref{lem2.1}, and for any fixed 
$(u_1^{0},u_2^{0})\in\Sigma$, let $(u_1^{n},u_2^{n})$, $n=1,2,\cdots$, be the sequence defined by 
$(u_1^{n},u_2^{n})={\mathcal A}(u_1^{n-1},u_2^{n-1})$. 
From Lemma \ref{lem2.1} we have that this sequence is bounded in $C^{1,\frac{1}{2}}(\Omega_T)$ ($\Omega_T=\R\times (0,T)$). Then, by Arzel\`a-Ascoli's theorem (see \cite[p. 635]{Eva}), there exists a 
$(u_1,u_2)\in \Sigma$ and a subsequence of $(u_1^{n},u_2^{n})$,
which we shall still denote by $(u_1^{n},u_2^{n})$, such that it converges to
$(u_1,u_2)$, uniformly in compacts sets in $\R\times[0,T]$. By Theorem \ref{teo0.1} we can write
\begin{equation}
\label{eq2.8}
\begin{array}{rl}
u_i^{n+1}(x,t)=&\int\Gamma_{[v_i(u_i^{n})]}(x,t,\xi,0)u_{i,0}(\xi)d\xi\\
&+\int_0^t\int\Gamma_{[v_i(u_i^{n})]}(x,t,\xi,\tau)f_i(y_i(u_i^{n}),u_1^{n},u_2^{n})(\xi,\tau)d\xi d\tau,
\end{array}
\end{equation}
with
\begin{equation}
\label{eq2.9}
v_i(u_i^{n})=\left(\frac{\lambda_i}{a_i+y_i(u_i^{n})},\frac{c_i}{a_i+y_i(u_i^{n})},0\right)
\end{equation}
and
\begin{equation}
y_i(u_i^{n})(x,t)=y_{i,0}(x)\mbox{e}^{-A_i\int_0^tf(u_i^{n}(x,s))ds}.
\end{equation}
As $u_i^{n}$ converges to $u_i$, we have that
$y_i(u_i^{n})$, $v_i(u_i^{n})$ and $f_i(y_i(u_i^{n}),u_1^{n},u_2^{n})$ converge to $y_i(u_i)$, $v_i(u_i)$ and $f_i(y_i(u_i),u_1,u_2)$, respectively.
Such convergences are uniform on compacts sets in $\R\times [0,T]$, because $u_i^{n}$ so
converges, $f'$ is bounded on $\R$, and $\nabla f_i$ is bounded on $[0,M_1]\times[0,M_2]\times[0,\|y_{i,0}\|_\infty]$.
Moreover, as $\|u_i^{n}\|_{1,\frac{1}{2}}\leq M_i$, for all $n\in\mathbb{N}$, 
 we have that $\|u_i\|_{1,\frac{1}{2}}\leq M_i$ and so, $v_i(u_i^{n}),v_i(u_i)\in B\left(R_i, \frac{\lambda_i}{a_i+b_i\|y_{i,0}\|_\infty},1\right)$.
 From Lemma \ref{lem1.5} we have that $\Gamma_{[v_i(u_i^{n})]}$
converges to $\Gamma_{[v_i(u_i)]}$ pointwise.\break\hfill 
As \ $|\Gamma_{[v_i(u_i^{n})]}(x,t,\xi,0)u_{i,0}(\xi)|\leq Kt^{-\frac{1}{2}}\mbox{e}^{-C\frac{(x-\xi)^2}{t}}$
\ and  
$$
|\Gamma_{[v_i(u_i^{n})]}(x,t,\xi,\tau)f_i(y_i(u_i^{n}),u_1^{n},u_2^{n})(\xi,\tau)|
\leq K(t-\tau)^{-\frac{1}{2}}\mbox{e}^{-C\frac{(x-\xi)^2}{t-\tau}},
$$
where $K$ and $C$ are constants that do depend on $n$ (see Corollary \ref{uniform L}), it follows, by the Lebesgue's dominated convergence theorem, that
\begin{equation}
\label{integral representation for ui}
u_i(x,t)=\int\Gamma_i(x,t,\xi,0)u_{i,0}(\xi)d\xi+\int_0^t\int\Gamma_i(x,t,\xi,\tau)
f_i(y_i,u_1,u_2)(\xi,\tau)d\xi d\tau.
\end{equation}
where $\Gamma_i$ is the fundamental solution to the equation 
$(w_i)_t-\alpha_i(y_i)(w_i)_{xx}+\beta_i(y_i)(w_i)_x=0$, whith 
$y_i\equiv y_i(x,t)=y_{i,0}(x)\mbox{e}^{-A_i \int_0^tf(u_i(x,\tau))d\tau}$.
Then, by Theorem \ref{teo0.1}, $u=(u_1,u_2)$ is a solution of the
system \eqref{eq1-general-2}--\eqref{in cond yi}, with  $u_i\in C^{2,1}(\R\times(0,T])\cap C^{1,\frac{1}{2}}(\R\times[0,T])$. 

To obtain that $u$ is in the sector $\langle 0,\varphi\rangle_T$, by what we discussed in the
Introduction (see p. \pageref{Cauchy}) we need to show the continuous dependence of the solution of
the Cauchy problem \eqref{Cauchy} with respect to reaction functions $\tilde{f}_i$ (here, denoted
simply by $f_i$) i.e. (more precisely) that the solution $u^{\delta}=(u_1^{\delta},u_2^{\delta})$, 
$\delta>0$, of 
\begin{equation}
\label{Cauchy w delta}
\left\{\begin{array}{ll}
(w_i)_t-\alpha_i(y_i)(w_i)_{xx}+\beta_i(y_i)(w_i)_x
=f_i^\delta(y_i,w_1,w_2), &
x\in\R, \ t>0\\
w_i(x,0)=u_{i,0}(x), & x\in\R,
\end{array}
\right.
\end{equation}
where $f_i^\delta(y_i,w_1,w_2):=f_i(y_i,w_1,w_2)\pm\delta$, 
$y_i\equiv y_i(x,t)=y_{i,0}(x)\mbox{e}^{-A_i \int_0^tf(u_i(x,\tau))d\tau}$, converges pointwise to 
$u$ when $\delta\to0+$, and, that all hypotheses of Corollary \ref{comparison cor} are fulfilled. 

Let us first observe that $\hat{u}=(0,0)$ and $\tilde{u}=(\varphi,\varphi)$, where 
(see p. \pageref{upper}) $\varphi(t)=(M+\beta)\mbox{e}^{\alpha t}-\beta$ \ 
(being $M=\max_{i=1,2}\|u_{i,0}\|_\infty$, 
$\alpha=\max_{i=1,2}\{\frac{A_ib_i\|y_{i,0}\|_\infty}{a_i}\}$ and  
$\beta=\max_{i=1,2}\{\frac{d_i}{A_ib_i}\}$) are a pair of lower and upper solutions to
the system\break\hfill $\mathcal{L}_i(w_i)\equiv(w_i)_t-\alpha_i(y_i)(w_i)_{xx}+\beta_i(y_i)(w_i)_x
=f_i(y_i,w_1,w_2)\equiv f_i(x,t,w_1,w_2)$\break\hfill occurring in \eqref{Cauchy} (i.e. the system in \eqref{Cauchy w delta} without delta). \label{pair}
(See Lemma 2 in \cite{js}.) Indeed, it is obvious that $\hat{u}=(0,0)$ is a lower solution 
(in fact, a\break\hfill solution) to this system, since $f_i(0,0)=0$. Regarding $\tilde{u}=(\varphi,\varphi)$, notice that 
$f_i=\dfrac{b_iA_iw_i+d_i}{a_i+b_iy_i}y_i\tilde{f}(w_i)
   \le\dfrac{b_iA_iw_i+d_i}{a_i}\|y_{i,0}\|_\infty$ 
when $w_1=w_2$ and $w_i\ge0$ (recall that $\tilde{f}$ is the function that coincides with the 
{\lq\lq}Arrhenius function{\rq\rq} $\mbox{e}^{-\frac{E}{s}}$ for $s>0$ and vanishes for $s\le0$) 
and $\mathcal{L}_i(\varphi)=\varphi'(t)=\alpha(M+\beta)\mbox{e}^{\alpha t}$, so,
\begin{align*}
\mathcal{L}_i(\varphi)(x,t)-f_i(x,t,\varphi,\varphi)
&\ge \alpha(M+\beta)\mbox{e}^{\alpha t}-\dfrac{A_ib_i\varphi(t)+d_i}{a_i}\|y_{i,0}\|_\infty\\
&  = (M+\beta)(\alpha-\dfrac{A_ib_i\|y_{i,0}\|_\infty}{a_i})\mbox{e}^{\alpha t} 
      +\dfrac{A_ib_i}{a_i}(\beta-\dfrac{d_i}{A_ib_i})\|y_{i,0}\|_\infty\\
&\ge0.			
\end{align*}

Next, as we noticed in the Introduction, we observe that $f_i$ is increasing with respect to $w_j$ 
($i,j=1,2$; $j\not=i$) for $(a_i+b_iy_i)\partial f_i/\partial w_j=q>0$. On the other hand,
$(a_i+b_iy_i)|\partial f_i/\partial w_i|
=   |b_iA_iy_i\tilde{f}(w_i)+(b_iA_iw_i+d_i)y_i\tilde{f}'(w_i)-q|
\le b_iA_iy_i+k_i(b_iA_i+d_i)y_i+q
\le b_iA_i\|y_{i,0}\|_\infty+k_i(b_iA_i+d_i)\|y_{i,0}\|_\infty+q$, where $k_i$ is some positive
constant, so $|\partial f_i/\partial w_i|$ is bounded by a constant, i.e. $f_i$ is uniformly 
lipschitz continuous in the variable $w_i$, and thus the 
{\lq\lq}semi-lipschitz{\rq\rq} condition \eqref{semi lip} is satisfied with $c_i$ being a constant,
for an arbitrary $\varepsilon_0$ (in the notation of Theorem \eqref{comparison}).  Concerning the
condition \eqref{cond 2}, we have $f_i\big|_{w_j=u_j}^{w_j=s+u_j}=sq$, so, it is satisfied with 
any $\varepsilon_0<\delta/q$ and $\delta'=\varepsilon_0q$. 

Now, we notice that both the lower solution $\hat{u}=(0,0)$ and the upper solution 
$\tilde{u}=(\varphi,\varphi)$ satisfy trivially the condition \eqref{growth}, since their
components are non negative functions. As for $u$, using the integral representation 
\eqref{integral representation for ui}, we also see easily that it satisfies \eqref{growth}, since
the first part\break $\int\Gamma_i(x,t,\xi,0)u_{i,0}(\xi)d\xi$
is non negative ($\Gamma_i,u_{i,0}\ge0$) and the modulus of the second part \break
$\int_0^t\int\Gamma_i(x,t,\xi,\tau)f_i(y_i,u_1,u_2)(\xi,\tau)d\xi d\tau$ can be estimated by
a constant times $t$, because $u$ is bounded and $\int\Gamma_i d\xi=1$ (see Remark \ref{ob0.1}). It remains to show the continuous dependence, i.e. that $u^\delta$ 
converges pointwise to $u$, but up to here, we can conclude, by Theorem \ref{comparison}, that $u^\delta\in \langle 0,\varphi\rangle_T$. In particular,
$u^\delta$ is bounded, uniformly with respect to $\delta$. 

To show the continuous dependence, using the integral representation 
\eqref{integral representation}, with $\Gamma_i$ being the fundamental solution to the equation $(w_i)_t-\alpha_i(y_i)(w_i)_{xx}+\beta_i(y_i)(w_i)_x=0$, and again that\break
$\int_0^t\int_{\R}\Gamma_i(x,t,\xi,\tau)\xi d\xi d\tau=1$ (see Remark \ref{ob0.1}), we have
$$
(u_i-u_i^\delta)(x,t)
=\int_0^t\int_{\R}\Gamma_i(x,t,\xi,\tau)[f(y_i,u_1,u_2)-f(y_i,u_1^\delta,u_2^\delta)](\xi,\tau)d\xi d\tau
\pm \delta t
$$ 
thus, using the lipschitz continuity of $f_i$
in bounded sets (recall that $u$ is bounded and $u^\delta$ is in the sector $\langle 0,\varphi\rangle_T$;
the latter being a consequence of Theorem \ref{comparison}) we obtain
$\sup_x|(u-u^\delta)(x,t)| \le K\int_0^t\sup_x|(u-u^\delta)(x,\tau)| d\tau + \delta T$, so, by Gronwall's lemma,
$\sup_x|(u-u^\delta)(x,t)|\le \delta Te^{KT}$, for some constant $K$. 
This shows that\break\hfill
$\lim_{\delta\to0+}u^\delta=u$ pointwise (in fact, uniformly) in $\Omega_T=\R\times (0,T)$. 

Now it remains to show the $L^p$ assertion (the last assertion) in Theorem \ref{local}. 
 This is essentially a consequence of the {\lq\lq}generalized Young's inequality{\rq\rq} \cite[p. 9]{Fol1} and the fact that the fundamental solution $\Gamma_{[v_i(u_i^{n})]}$ is a {\lq\lq}regular kernel{\rq\rq}, uniformly with respect to $n$. More precisely, we shall show in the next paragraph 
that there exist positive numbers $\overline{T}\leq T$ and $S$ such that, if $\|u_i^n(.,t)\|_{L^p}\leq S$ for all $t\in[0,\overline{T}]$ then $\|u_i^{n+1}(.,t)\|_{L^p}\leq S$ for all $t\in[0,\overline{T}]$ as well. Then the assertion follows by Banach-Alaoglu's theorem. 

From \eqref{eq2.8}, the {\lq\lq}generalized Young's inequality{\rq\rq} \cite[p. 9]{Fol1} and 
the Minkowski's ineguality for integrals \cite[p. 194]{Fol}), we have
\begin{align}
\nonumber
&\|u_i^{n+1}(.,t)\|_{L^p}\\
\nonumber
\leq &\,\|\int\Gamma_{[v_i(u_i^{n})]}(\cdot,t,\xi,0)u_{i,0}(\xi)d\xi\|_{L^p}\\
\nonumber
&+\|\int_0^t\int\Gamma_{[v_i(u_i^{n})]}(.,t,\xi,\tau)f_i(u_1^n,u_2^n,y_i(u_i^n))(\xi,\tau)d\xi d\tau\|_{L^p}\\
\nonumber
\leq&\,(\sup_\xi\int |\Gamma_{[v_i(u_i^{n})]}(x,t,\xi,0)|dx)\,\|u_{i,0}\|_{L^p}\\
\nonumber
&+\int_0^t(\sup_\xi\int |\Gamma_{[v_i(u_i^{n})]}(x,t,\xi,\tau)|dx)\,\|f_i(u_1^n,u_2^n,y_i(u_i^n))(\cdot,\tau)\|_{L^p} d\tau\\
\nonumber
\leq&\,(\sup_\xi\int \frac{K}{t^{\frac{1}{2}}}\mbox{e}^{-C\frac{(x-\xi)^2}{t}}dx)\,\|u_{i,0}\|_{L^p}\\
\nonumber
&+\int_0^t(\sup_\xi\int \frac{K}{(t-\tau)^{\frac{1}{2}}}\mbox{e}^{-C\frac{(x-\xi)^2}{t-\tau}}dx)\,\|f_i(u_1^n,u_2^n,y_i(u_i^n))(\cdot,\tau)\|_{L^p} d\tau\\
\nonumber
=&\,\overline{K}\|u_{i,0}\|_{L^p}
+\overline{K}\int_0^t\|f_i(u_1^n,u_2^n,y_i(u_i^n))(\cdot,\tau)\|_{L^p} d\tau\\
\nonumber
&\hspace{1.0cm} (\overline{K}\equiv K\int \mbox{e}^{-Cx^2}dx)
\end{align}
\begin{align}
\nonumber
\leq&\, \overline{K}\|u_{i,0}\|_{L^p}
+\overline{K}(\ \sup_{\mbox{\scriptsize in a compact set}}|\nabla f_i|)\int_0^t(\|u_1^n(\cdot,\tau)\|_{L^p}+ \|u_2^n(\cdot,\tau)\|_{L^p})d\tau\\
\nonumber
\leq&\, \frac{S}{2}
+\overline{K}(\ \sup_{\mbox{\scriptsize in a compact set}}|\nabla f_i|)2S\overline{T}\\
\nonumber
&\hspace{1.0cm} \mbox{ if } S\ge 2\overline{K}\|u_{i,0}\|_{L^p}
 \mbox{ and } \|u_i^n(\cdot,t)\|_{L^p}\le S \mbox{ for all }
t\in [0,\overline{T}]\\
\nonumber
\leq&\, S\,, \hspace{0.5cm} \mbox{ if } \overline{T}\le 1/4\overline{K}(\ \sup_{\mbox{\scriptsize in a compact set}}|\nabla f_i|).
\end{align}
This ends the proof of Theorem \ref{local}. 

\section{Proofs of theorems \ref{invariant} and \ref{comparison} and other results}
\label{invariant and comparison}

In this section we are concerned with general parabolic operators $\mathcal{L}_i$ given by \eqref{general rd op}. We prove theorems \ref{invariant} and 
\ref{comparison} and state and prove two corollaries which are version of these theorems in the case one has
continuous dependence of the solution of the system with respect to the 
reaction functions, and also make three remarks giving alternative conditions for the hypotheses of theorems \ref{invariant} and \ref{comparison}. 

We begin by giving the main idea to prove Theorem \ref{invariant}, cf. 
\cite[Theorem 4.1]{ccs} (\cite[Theorem 14.7]{smoller}). Under the hypotheses of Theorem 
\ref{invariant}, except for the condition \eqref{growth} for now, suppose for an 
arbitrary small positive number $\varepsilon$ 
($0<\varepsilon<\varepsilon_0$) there is a point $(x_0,t_0)\in\R^d\times (0,T)$ on which 
$u=(u_1,u_2)$ belongs to the boundary of the slightly enlarged quadrant  
$Q_\varepsilon:=\{u_1\ge -\varepsilon \mbox{ and } u_2\ge-\varepsilon\}$ and such that 
$u(x,t)$ belongs to its interior for all $(x,t)\in\R^d\times (0,t_0)$. 
If $u(x_0,t_0)$ belongs to the vertical part  
$\mathcal{V}_\varepsilon:=\{u_1=-\varepsilon \mbox{ and } u_2\ge -\varepsilon\}$ of the boundary 
$\partial Q_\varepsilon$, taking the equation \eqref{general rd s} at the the point 
$(x,t)=(x_0,t_0)$ we obtain $u_1=-\varepsilon$ and $\mathcal{L}_1(u)\le0$, so $f_1(x_0,t_0,-\varepsilon,u_2(x_0,t_0))+\delta=(\mathcal{L}_1(u_1)+c_1u_1)(x_0,t_0)\le -c_1\varepsilon$,
which contradicts the hypothesis $f_1(x,t,u_1,u_2)\ge0$, when $-\varepsilon_0<u_1<0$ and 
$u_2>-\varepsilon_0$, since we can take $\varepsilon\in (0,\varepsilon_0)$ sufficiently small such
that $-c_1\varepsilon<\delta$.
Analogously, we obtain a contradiction if $u(x_0,t_0)$ belongs to the horizontal  part 
$\mathcal{H}_\varepsilon:=\{u_1\ge-\varepsilon \mbox{ and } u_2=-\varepsilon\}$. Thus, the crux point of this argument is to show the existence of the point $(x_0,t_0)$ having the above properties. The idea is that if we assume that $u(x,t)$ does not belong to $Q_\varepsilon$ for all $(x,t)\in\Omega_T=\R^2\times (0,T)$  then, since at $t=0$, $u\in int.(Q_\varepsilon)$, there would exist this {\lq\lq}first point{\rq\rq} $(x_0,t_0)\in\R^d \times(0,T)$ (with $t_0>0$) on which $u$ belongs to the boundary of $Q_\varepsilon$, from thence we obtain the contradiction with
the assumption $f_i\ge0$ when $-\varepsilon_0<u_i<0$ and $u_j>-\varepsilon_0$ 
(being $j\not= i, \ i,j=1,2$). However, {\em a priori} it might occur that 
$u(x_n,t_n)\not\in Q_\varepsilon$ for a sequence of points $(x_n,t_n)\in\Omega_T$ with 
$t_n\searrow 0$ and $|x_n|\to\infty$, even though $u(x,0)\in int.(Q_\varepsilon)$ for all 
$x\in\R^d$, and in this case, this point $(x_0,t_0)$ would not exist. 
This situation is avoided with the condition \eqref{growth}.

\smallskip

\noindent
{\bf Proof of Theorem \ref{invariant}.} \ \label{proof of inv}
Let us assume there is a point $(x,t)\in \Omega_T$ such that
$u(x,t)\not\in Q_\varepsilon$ and we shall obtain a contradiction. If this is the case then, 
by the continuity of $u$, there exists another point on which $u$ belongs to 
$\mathcal{V}_\varepsilon$ or $\mathcal{H}_\varepsilon$ (defined above). 
Consider the case that $u\in\mathcal{V}_\varepsilon$ (the case $u\in\mathcal{H}_\varepsilon$ is similar). Then we define 
$t_0=\inf\{t\in (0,T); u(x,t)\in\mathcal{V}_\varepsilon \mbox{ for some } x\in\R^d\}$. 
We claim that $t_0>0$.
Let $(x_n,t_n)\in\Omega_T=\R\times(0,T)$ be a sequence with $t_n\searrow t_0$ and 
$u_1(x_n,t_n)=-\varepsilon$.
Now, from \eqref{growth} there are positive numbers $R$ and $\tau$ such $u_1(x,t)>-\varepsilon/2$
for $(x,t)\in\Omega_T$ with $|x|<R$ and $0<t<\tau$. Then if $t_0=0$, we would have 
$|x_n|\le R$ for all $n$ sufficiently large, so, by passing to some subsequence we can assume 
that $(x_n)$ converges to some $x_0\in\R$. By continuity again, we arrive at $u_1(x_0,0)=-\varepsilon$. This contradicts the hypothesis $u_1(x,0)\ge0$ for all $x\in\R^d$.
Thus, we conclude that $t_0>0$. Moreover, $u_1(x_0,t_0)=-\varepsilon$, and, as we show above
this contradicts the hypothesis $f_1\ge0$ when $-\varepsilon_0<u_1<0$ and $u_2>-\varepsilon_0$.  
\ \ 
\vbox{\hrule height0.6pt\hbox{%
\vrule height1.3ex width0.6pt\hskip0.8ex
\vrule width0.6pt}\hrule height0.6pt}

\smallskip
\noindent
{\bf Proof of Theorem \ref{comparison}.} \  
Theorem \ref{comparison} is obtained by comparison, via Theorem \ref{invariant}. 
 Indeed, in the case that $u$ is an upper solution to \eqref{general rd s 2}, defining 
$w_i=u_i-\hat{u}_i$, 
 we have 
$(\mathcal{L}_i-c_i)w_i=\mathcal{L}_i(u_i)-\mathcal{L}_i(\hat{u}_i)-c_iw_i
\ge f_i(x,t,u_1,u_2)-f_i(x,t,\hat{u}_1,\hat{u}_2)-c_iw_i+\delta
=f_i(x,t,w_1+\hat{u_1},w_2+\hat{u}_2)-f_i(x,t,\hat{u}_1,\hat{u}_2)-c_iw_i+\delta
\equiv g_i(x,t,w_1,w_2)+\delta-\delta'$, where 
$g_i(x,t,w_1,w_2)=f_i(x,t,w_1+\hat{u_1},w_2+\hat{u}_2)-f_i(x,t,\hat{u}_1,\hat{u}_2)-c_iw_i+\delta'$. 
Now, omitting the dependence on some arguments for simplicity, 
and considering the case $i=1$ (the case $i=2$ is similar), subtracting and adding the
term $f_1(\hat{u_1},w_2+\hat{u}_2)$, we have 
\begin{align*}
g_1 &= [f_1(w_1+\hat{u_1},w_2+\hat{u}_2) - f_1(\hat{u_1},w_2+\hat{u}_2)] - c_1w_1\\
& \hspace{1cm} + [f_1(\hat{u_1},w_2+\hat{u}_2) - f_1(\hat{u}_1,\hat{u}_2)]+\delta'\\
&\ge 0
\end{align*}
for all $w_1\in (-\varepsilon_0, 0)$ and $w_2\ge-\varepsilon_0$, by \eqref{semi lip}, the monotonicity of $f_1$ with respect to $u_2$, and \eqref{cond 2}. Thus we have
shown that $w_1$ satisfies all the hypotheses of Theorem \ref{invariant} with 
$w_1$, $-c_1$ and $\delta-\delta'$ in place of $u_1$, $c_1$ and $\delta$,
respectively, then, we conclude that $w_1(x,t)\ge0$, and, similarly, we can show that
$w_2(x,t)\ge0$, for all $(x,t)\in\Omega_T$.
This ends the proof of the first statement of Theorem \ref{comparison}.

Regarding the second statement, that is, the case that $u$ is a lower solution,
we observe that it reduces to the first statement by substituting $f_i$ by 
$f_i-\delta$ and taking $\tilde{u}$ in place of the $u$ in the first statement
and the $u$ in the second statement in place of $\hat{u}$.  
\ \ 
\vbox{\hrule height0.6pt\hbox{%
\vrule height1.3ex width0.6pt\hskip0.8ex
\vrule width0.6pt}\hrule height0.6pt}

\smallskip
 
\begin{corollary} {\em (Corollary of Theorem \ref{invariant}.)} \ 
\label{invariant cor}
Under the hypotheses and notations of Theorem \ref{invariant} but with $\delta=0$, suppose we have a continuous dependence of the solutions of the system
\begin{equation}
\label{general rd s wo delta}
\mathcal{L}_i(u_i)+c_iu_i=f_i(x,t,u_1,u_2)
\end{equation}
$(x,t)\in\Omega_T=\R^d\times (0,T)$, $0<T\le\infty$, with respect to the reaction functions 
$f_i$. Then the quadrant $Q=\{(u_1,u_2)\,;\, u_1\ge0, u_2\ge0\}$ is a positively invariant region to the system \eqref{general rd s wo delta}. 
More precisely, let $u=(u_1,u_2)\in C^{1,2}(\Omega_T)\cap C(\R^d\times [0,T))$ be a solution to the system \eqref{general rd s wo delta}
such that $u(x,0)\in Q$ for all $x\in\R^d$. If $u$ is the pointwise limit, when $\delta\to0+$, of
$u^\delta$, where $u^\delta\in C^{1,2}(\Omega_T)\cap C(\R^d\times [0,T))$ satisfying \eqref{growth}
is a solution of the Cauchy problem (assuming it has such a solution) 
\begin{equation}
\label{general rd Cauchy}
\left\{\begin{array}{l}
\mathcal{L}_i(u_i^\delta)+c_iu_i^\delta=f_i(x,t,u_1^\delta,u_2^\delta)+\delta,
\quad\quad (x,t)\in\Omega_T\\
u^\delta(x,0)=u(x,0), \quad\quad x\in\R^d
\end{array}\right. 
\end{equation}
then $u(x,t)\in Q$ for all $(x,t)\in\R^d\times [0,T)$.
\end{corollary}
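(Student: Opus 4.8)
The plan is to apply Theorem \ref{invariant} to each member $u^\delta$ of the approximating family and then pass to the limit $\delta\to0+$, using the assumed continuous dependence of the solutions on the reaction terms.

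First I would verify that, for every fixed $\delta>0$, the function $u^\delta=(u_1^\delta,u_2^\delta)$ satisfies all the hypotheses of Theorem \ref{invariant} with this same $\delta$. Indeed, by assumption $u^\delta\in C^{1,2}(\Omega_T)\cap C(\R^d\times[0,T))$ and $u^\delta$ satisfies the growth condition \eqref{growth}; the reaction functions $f_i$ are the same as in the limiting system \eqref{general rd s wo delta}, so the sign hypothesis ``$f_i\ge0$ when $-\varepsilon_0<u_i<0$ and $u_j>-\varepsilon_0$'' (with the same $\varepsilon_0$) continues to hold; being an exact solution of \eqref{general rd Cauchy}, $u^\delta$ trivially satisfies the differential inequality $((\mathcal{L}_i+c_i)u_i^\delta)(x,t)=f_i(x,t,u_1^\delta(x,t),u_2^\delta(x,t))+\delta\ge f_i(x,t,u_1^\delta(x,t),u_2^\delta(x,t))+\delta$; and $u^\delta(x,0)=u(x,0)\in Q$ for all $x\in\R^d$. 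Therefore Theorem \ref{invariant} yields $u^\delta(x,t)\in Q$, i.e. $u_i^\delta(x,t)\ge0$ for $i=1,2$, for all $(x,t)\in\R^d\times[0,T)$.

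Next I would let $\delta\to0+$. By the hypothesis of continuous dependence on the reaction functions, $u_i(x,t)=\lim_{\delta\to0+}u_i^\delta(x,t)$ for every fixed $(x,t)\in\R^d\times[0,T)$, and since each $u_i^\delta(x,t)\ge0$, the limit satisfies $u_i(x,t)\ge0$ as well. Hence $u(x,t)\in Q$ for all $(x,t)\in\R^d\times[0,T)$, which is the asserted positive invariance of $Q$ for the system \eqref{general rd s wo delta}.

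There is essentially no obstacle in this argument: all the substance is already packaged in Theorem \ref{invariant}, and the passage to the pointwise limit preserves the non-strict inequality $u_i^\delta\ge0$ automatically. The only points requiring a word of care are that the approximants $u^\delta$ are assumed a priori to satisfy \eqref{growth} (which is why this condition must be imposed on $u^\delta$ rather than merely on $u$, since the limiting $u$ need not be controlled at spatial infinity), and that one uses the \emph{same} $f_i$—hence the same $\varepsilon_0$ and the same sign condition—for both \eqref{general rd s wo delta} and \eqref{general rd Cauchy}; both are built into the hypotheses of the corollary.
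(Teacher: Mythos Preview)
Your proof is correct and follows essentially the same approach as the paper: apply Theorem \ref{invariant} to each $u^\delta$ to obtain $u^\delta(x,t)\in Q$, then pass to the pointwise limit $\delta\to0+$ using that $Q$ is closed. The paper's version is terser, but your explicit verification of the hypotheses of Theorem \ref{invariant} for $u^\delta$ is a welcome clarification.
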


\begin{proof}
By Theorem \ref{invariant} we have $u^\delta(x,t)\in Q$ for all $(x,t)\in\R^d\times [0,T)$, for any $\delta>0$. Since $u(x,t)=\lim_{\delta\to0+}u^\delta(x,t)$ for each
$(x,t)\in\R^d\times[0,T)$ and $Q$ is a closed set in $\R^d$, it follows that $u(x,t)\in Q$ for all $(x,t)\in\R^d\times [0,T)$
as well.
\end{proof}

\begin{remark}
\label{invariant rem}
{\em As we can see by the proofs of Theorem \ref{invariant} and Corollary \ref{invariant cor}, 
we can replace in these results the condition on the reaction functions 
$f_i\ge0$ when $-\varepsilon_0<u_i<0$ and $u_j>-\varepsilon_0$ ($i,j=1,2$, $j\not=i$)
by $f_i>0$ when $u_i=0$ and $u_j\ge0$, if we assume that $u(x,0)\in int.(Q)$ for
all $x\in\R^d$, or, if we assume a continuous dependence also on the initial data, i.e. 
$u(x,0)\in Q$ (for all $x\in\R^d$) and $u$ is the pointwise limit, when $\delta\to0+$, of the solution 
$u^\delta=(u_1^\delta,u_2^\delta)$ in the space $C^{1,2}(\Omega_T)\cap C(\R^d\times [0,T))$ and satisfying \eqref{growth} of the Cauchy problem (assuming it has such a solution)} 
\begin{equation}
\left\{\begin{array}{l}
\mathcal{L}_i(u_i^\delta)+c_iu_i^\delta=f_i(x,t,u_1^\delta,u_2^\delta)+\delta,
\quad\quad (x,t)\in\Omega_T\\
u_i^\delta(x,0)=u_i(x,0)+\delta, \quad\quad x\in\R^d.
\end{array}\right. 
\end{equation}
\end{remark} 

\begin{remark} {\em In Theorem \ref{invariant} and Corollary \ref{invariant cor},
and in Remark \ref{invariant rem} as well, we
notice that to obtain $u(x,t)\in Q$ for all $(x,t)\in\Omega_T$, it suffices 
to show that $u(x,t)\in Q_{\varepsilon,S}$ for all $(x,t)\in\Omega_T$, for
arbitrarily small $\varepsilon>0$ and large $S>0$, where $Q_{\varepsilon,S}=\{(u_1,u_2)\, ;\, -\varepsilon\le u_i\le S\}$.
Then we obtain the same results if we dispense the condition
$f_i\ge0$ or $f_i>0$, when $-\varepsilon_0<u_i<0$ and $u_j>-\varepsilon_0$, ($i,j=1,2$, $j\not=i$), and assume that $f_i>0$ when $u_i=0$ and $f_i$ is continuous at the point $u_i=0$, uniformly with respect to $(x,t)\in\Omega_T$ and 
$-\varepsilon_1\le u_j\le S$, for any $S>0$ and some $\varepsilon_1>0$. 
Indeed, in this case, given $S>0$, there exists some $\varepsilon_0>0$ such
that $f_i>0$ when $-\varepsilon_0<u_i<0$ and $-\varepsilon_0<u_j\le S$}.
\end{remark}

\smallskip

\begin{corollary} {\em (Corollary of Theorem \ref{comparison}.)} \ 
\label{comparison cor}
Let the hypotheses of Theorem \ref{comparison} 
 on the reactions functions $f_i$ be in force and suppose we have a continuous dependence of the solutions of
the system \eqref{general rd s wo ci and delta} with respect to the reaction functions $f_i$; more precisely, suppose $u=(u_1,u_2)\in C^{1,2}(\Omega_T)\cap C(\R^d\times [0,T))$ 
{\em ($0<T\le\infty$, $\Omega_T=\R^d\times (0,T)$)} is a solution of 
\eqref{general rd s wo ci and delta} which is the pointwise limit, in $\Omega_T$,
of $(u^{+\delta})$ and also of $(u^{-\delta})$, 
when $\delta\to0+$, where $u^{+\delta}=(u_1^{+\delta},u_2^{+\delta})$ 
(respect. $u^{-\delta}=(u_1^{-\delta},u_2^{-\delta})$), in the space 
$C^{1,2}(\Omega_T)\cap C(\R^d\times [0,T))$ and satisfying \eqref{growth}, is a solution of the Cauchy problem (assuming such a solution exists) 
\begin{equation}
\label{general rd Cauchy wo ci}
\left\{\begin{array}{l}
\mathcal{L}_i(u_i^{\pm\delta})=f_i(x,t,u_1^{\pm\delta},u_2^{\pm\delta})\pm\delta,
\quad\quad (x,t)\in\Omega_T\\
u^{\pm\delta}(x,0)=u(x,0), \quad\quad x\in\R^d,
\end{array}\right. 
\end{equation}
$\delta>0$.
Then if $\hat{u}=(\hat{u}_1,\hat{u}_2)$ (respect. $\tilde{u}=(\tilde{u}_1,\tilde{u}_2)$), in the space $C^{1,2}(\Omega_T)\cap C(\R^d\times [0,T))$ and satisfying \eqref{growth}, is a lower 
(respect. upper) solution to the system \eqref{general rd s wo ci and delta} 
(i.e. $\mathcal{L}_i(\hat{u}_i)(x,t)\le f_i(x,t,\hat{u}_1(x,t),\hat{u}_2(x,t))$
for all $(x,t)\in\Omega_T$; 
respect. $\mathcal{L}_i(\tilde{u}_i)(x,t)\ge f_i(x,t,\tilde{u}_1(x,t),\tilde{u}_2(x,t))$
for all $(x,t)\in\Omega_T$) such that $\hat{u}_i(x,0)\le u_i(x,0)$ (respect. 
$u_i(x,0)\le \tilde{u}_i(x,0)$) for all $x\in\R^d$, then $\hat{u}_i(x,t)\le u_i(x,t)$ 
(respect. $u_i(x,t)\le \tilde{u}_i(x,t)$) for all $(x,t)\in\R^d\times[0,T)$.  
\end{corollary}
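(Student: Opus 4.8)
The plan is to deduce the corollary directly from Theorem~\ref{comparison} by a limiting argument in the parameter $\delta$, exploiting the assumed continuous dependence. The key observation is that Theorem~\ref{comparison} already compares a lower solution of the unperturbed system \eqref{general rd s wo ci and delta} with an upper solution of the $\delta$-perturbed system \eqref{general rd s 2} (and, dually, an upper solution of \eqref{general rd s wo ci and delta} with a lower solution of \eqref{general rd s 3}); here the role of the $\delta$-perturbed object is played simply by the approximating solution $u^{+\delta}$ (resp. $u^{-\delta}$) furnished by the continuous-dependence hypothesis, so that once the comparison is established for every $\delta>0$ it survives the passage to the limit.

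For the first assertion I would fix $\delta>0$ and note that $u^{+\delta}=(u_1^{+\delta},u_2^{+\delta})$, being a solution of \eqref{general rd Cauchy wo ci} with the $+\delta$ sign, is in particular an upper solution of \eqref{general rd s 2}, since $\mathcal{L}_i(u_i^{+\delta})=f_i(x,t,u_1^{+\delta},u_2^{+\delta})+\delta$ on $\Omega_T$. By hypothesis $\hat u\in C^{1,2}(\Omega_T)\cap C(\R^d\times[0,T))$ is a lower solution of \eqref{general rd s wo ci and delta}; the reaction functions $f_i$ satisfy all the structural conditions of Theorem~\ref{comparison} (monotonicity in $u_j$, the semi-Lipschitz condition \eqref{semi lip}, and \eqref{cond 2}) because these are explicitly assumed to be in force; the initial data match, $u_i^{+\delta}(x,0)=u_i(x,0)\ge\hat u_i(x,0)$ for all $x\in\R^d$; and $u^{+\delta}-\hat u$ satisfies \eqref{growth}, as both $u^{+\delta}$ and $\hat u$ are assumed to satisfy it. Hence the first statement of Theorem~\ref{comparison} gives $\hat u_i(x,t)\le u_i^{+\delta}(x,t)$ for all $(x,t)\in\Omega_T$. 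Letting $\delta\to0+$ and using the pointwise convergence $u^{+\delta}(x,t)\to u(x,t)$, we obtain $\hat u_i\le u_i$ on $\Omega_T$; combining this with the hypothesis $\hat u_i(x,0)\le u_i(x,0)$ and the continuity of both functions on $\R^d\times[0,T)$ yields $\hat u_i\le u_i$ on all of $\R^d\times[0,T)$.

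The second assertion is proved symmetrically: one uses the approximating solutions $u^{-\delta}$, which are lower solutions of \eqref{general rd s 3}, applies the second statement of Theorem~\ref{comparison} to the pair consisting of $u^{-\delta}$ (lower) and $\tilde u$ (upper solution of \eqref{general rd s wo ci and delta}) to get $u_i^{-\delta}\le\tilde u_i$ on $\Omega_T$, and then passes to the limit $\delta\to0+$. I do not expect a genuine obstacle here, since all the analytic content resides in Theorem~\ref{comparison}; the only points needing a line of care are checking that the structural hypotheses on $f_i$ (in particular that, for every small $\delta>0$, a $\delta'\in(0,\delta)$ with \eqref{cond 2} is available) and, above all, the growth condition \eqref{growth} are inherited by the relevant differences $u^{+\delta}-\hat u$ and $\tilde u-u^{-\delta}$, together with the routine upgrade of an inequality valid on $\Omega_T$ to one valid on $\R^d\times[0,T)$ through continuity and the matching of the initial values.
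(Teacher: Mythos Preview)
Your proposal is correct and follows essentially the same route as the paper: apply Theorem~\ref{comparison} to the pair $(\hat u,\,u^{+\delta})$ (respectively $(u^{-\delta},\,\tilde u)$) to obtain the inequality for each $\delta>0$, then pass to the pointwise limit $\delta\to 0+$. The paper phrases the application of Theorem~\ref{comparison} slightly differently---it shifts the reaction by an intermediate $\hat\delta=(\delta+\delta')/2$ before invoking the theorem---but the substance is identical, and your flagged care points (availability of a $\delta'<\delta$ in \eqref{cond 2} for small $\delta$, and that the differences $u^{+\delta}-\hat u$, $\tilde u-u^{-\delta}$ inherit \eqref{growth}) are exactly the ones the paper glosses over as well.
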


\begin{proof}
Let us consider only the case regarding the lower solution $\hat{u}$, since the case
regarding the upper solution $\tilde{u}$ can be proven analogously. The proof consists
in applying Theorem \ref{comparison} with $f_i+\hat{\delta}$ ($f_i-\hat{\delta}$ if it were the
case regarding $\tilde{u}$) in place of $f_i$, where $\hat{\delta}$ is some number betweem
$\delta'$ and $\delta$, e.g. $(\delta'+\delta)/2$. Notice that $\hat{u}$ and $u^\delta$ are,
respectively, a lower and an upper solution to the system 
$\mathcal{L}_i(u_i)=f_i(x,t,u_1,u_2)+(\delta+\delta')/2$. Besides, $u^\delta-\hat{u}$
satisfies \eqref{growth}, since both $u^\delta$ and $\hat{u}$ do satisfy, and 
$u_i^\delta(x,0)=u_i(x,0)\ge\hat{u}_i(x,0)$ for all $x\in\R^d$. Then, by Theorem \ref{comparison},
we have that $u_i^\delta(x,t)\ge\hat{u}_i(x,t)$ for all $(x,t)\in\R^d\times [0,T)$. Since
this is true for any $\delta\to0+$ and $u(x,t)=\lim_{\delta\to0+}u_i^\delta(x,t)$ for all 
$(x,t)\in\Omega_T$, we obtain the result. \ \ \end{proof} 
 
\begin{remark}
{\em In the statement 1 (respect. statement 2) of Theorem \ref{comparison} we can replace the conditions
\eqref{semi lip} and \eqref{cond 2} by
\begin{equation}
\label{semi lip sector}
\begin{array}{c}
\ f_1(x,t,s+u_1(x,t),u_2(x,t))-f_1(x,t,u_1(x,t),u_2(x,t))\ge c_1(x,t)s,\\
f_2(x,t,u_1(x,t),s+u_2(x,t))-f_2(x,t,u_1(x,t),u_2(x,t))\ge c_2(x,t)s
\end{array}
\end{equation} 
and 
\begin{equation}
\label{cond 2 sector}
\begin{array}{c} 
\ f_1(x,t,u_1(x,t),s+u_2(x,t))-f_1(x,t,u_1(x,t),u_2(x,t))\ge-\delta',\\
f_2(x,t,s+u_1(x,t),u_2(x,t))-f_1(x,t,u_1(x,t),u_2(x,t))\ge-\delta'
\end{array}
\end{equation}
for all $(x,t)\in\Omega_T=\R^d\times (0,T)$, $s\in (-\varepsilon_0,0)$, and all 
$u=(u_1,u_2)\in C^{1,2}(\Omega_T)\cap C(\R^d\times [0,T))$ satisfying \eqref{growth} and such that
$u_i\ge \hat{u}_i$ (respect. $u_i\le \tilde{u}_i$). Cf. \cite[\S 8.2]{pao}.
Indeed, following the proof of Theorem \ref{invariant}, p. \pageref{proof of inv}, 
if there was a point $(x,t)\in\Omega_T$ such that $w(x,t):=(u-\hat{u})(x,t)\not\in Q_\varepsilon$
(respect. $w(x,t):=(\tilde{u}-u)(x,t)\not\in Q_\varepsilon$)
for some arbitrarily small $\varepsilon$, then we would get the contradiction
$(\mathcal{L}_i-c_i)w_i\le 0$ and (see the proof of Theorem \ref{comparison}) 
$(\mathcal{L}_i-c_i)w_i \ge f_i(x,t,u_1,u_2)-f_i(x,t,\hat{u}_1,\hat{u}_2)-c_iw_i+\delta>0$ at some point in 
$(x_0,t_0)\in\Omega_T$. Notice that $u_i\le\tilde{u}_i$ (respect. 
$u_i\le \tilde{u}_i$) for all $(x,t)\in\Omega_{t_0}$.}
\end{remark}

\section{Global solution}
\label{global solution}

In this section we prove Theorem \ref{global}. Let us denote in this section by $u=(u_1,u_2)$ a
maximal solution of \eqref{eq1-general-2}--\eqref{in cond ui}, defined in a maximal interval 
$[0,T^*)$, (see the Introduction, p. 5), in the space 
$X_{T^*}=C^{2,1}( \R\times(0,T^*))\cap C_{\mbox{\tiny{loc}}}^{1,\frac{1}{2}}(\R\times[0,T^*))
\cap L_{\!\!\!\mbox{ \tiny{loc} }}^\infty( (0,\infty) ; L^p(\R) )$, which was also presented in the
Introduction, intercepted with the sector $\langle 0,\varphi\rangle_{T^*}$.
Then we shall show that $T^*=\infty$. Throughout this section we assume all the hypotheses in Theorem \ref{global}, specially $u_{i,0}\in L^p(\mathbb{R})$, for some $p\in (1,\infty)$. 
We suppose that $T^*<\infty$ and we shall obtain a contradiction.
 
Let us recall that the convolution product of functions in conjugate Lebesgue spaces on 
$\R^n$ decay to zero at infinity, more precisely, if $f\in L^p(\R^n)$ and $g\in L^q(\R^n)$,
with $1<p<\infty$ and $1/p+1/q=1$, then $f*g\in C_0(\R^n)$, where $C_0(\R^n)$ denote the space
of continuous functions $h$ on $\R^n$ such that $\lim_{|x|\to\infty}|h(x)|=0$. Besides, 
$\sup_{x\in\R^n}|(f*g)(x)|\leq\|f\|_{L^p}\|g\|_{L^q}$. See \cite[p. 241]{Fol}.\footnote{We would like to thank Prof. Lucas C. F. Ferreira for bringing our attention to this fact and suggesting us to take the initial data 
$u_{i,0}$ in $L^p$.} Using this fact we can prove the following lemma.

\begin{lemma}
\label{lem2.3}
For any $t\in (0,T^*)$ and $s=0,1$, 
we have  $(\partial^s_xu) (.,t)\in C_0(\R)$. Furthermore, there exist the partial derivatives 
$(\partial^3_xu) (x,t)$ and $(\partial_t\partial_x)u (x,t)$, for any $(x,t)\in\R\times(0,T^*)$.
\end{lemma}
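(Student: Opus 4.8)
The plan is to exploit the integral representation \eqref{repr} (equivalently \eqref{integral representation for ui}) for the maximal solution $u=u^*$ on $\R\times[0,T^*)$, where $\Gamma_i$ is the fundamental solution of $\mathcal{L}_i w_i=0$ with $\mathcal{L}_i=\partial_t-\alpha_i(y_i)\partial_{xx}+\beta_i(y_i)\partial_x$ and $y_i=y_{i,0}(x)\mbox{e}^{-A_i\int_0^tf(u_i)d\tau}$. First I would fix $t\in(0,T^*)$ and split
$u_i(\cdot,t)=\int_\R\Gamma_i(\cdot,t,\xi,0)u_{i,0}(\xi)d\xi+\int_0^t\int_\R\Gamma_i(\cdot,t,\xi,\tau)f_i(y_i,u_1,u_2)(\xi,\tau)d\xi d\tau$.
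For $s=0$: the first term is, by the Gaussian bound \eqref{eq0.10} (or Corollary \ref{uniform L}), dominated by a convolution of $u_{i,0}\in L^p(\R)$ with the $L^q$ kernel $K t^{-1/2}\mbox{e}^{-C(x-\xi)^2/t}$ ($1/p+1/q=1$), hence lies in $C_0(\R)$ by the cited fact of Folland (\cite[p. 241]{Fol}); the second term is bounded in sup-norm by $\int_0^t\|f_i(y_i,u_1,u_2)(\cdot,\tau)\|_{L^p}(\sup_\xi\int K(t-\tau)^{-1/2}\mbox{e}^{-C(x-\xi)^2/(t-\tau)}dx)d\tau$, and since $f_i$ is Lipschitz on bounded sets and $u\in L^\infty_{\mbox{\tiny loc}}((0,T^*);L^p(\R))$ while $u$ is bounded (it is in the sector $\langle 0,\varphi\rangle_{T^*}$), this term is again a $C_0(\R)$ function — one checks the decay at infinity directly from the Gaussian estimate, using that the $L^p$ norm of the integrand is finite. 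For $s=1$ the argument is identical using $|\partial_x\Gamma_i|\le K(t-\tau)^{-1}\mbox{e}^{-C(x-\xi)^2/(t-\tau)}$, noting $\int_0^t(t-\tau)^{-1}\cdot(t-\tau)^{1/2}d\tau=\int_0^t(t-\tau)^{-1/2}d\tau<\infty$ is still integrable, and that differentiation under the integral sign is justified by the same Gaussian domination.

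For the existence of $\partial_x^3 u$ and $\partial_t\partial_x u$ at any $(x,t)\in\R\times(0,T^*)$, the idea is a bootstrap from the regularity just obtained. Having shown $\partial_x u_i(\cdot,t)\in C_0(\R)$ for every $t\in(0,T^*)$, and that $u_i\in C^{2,1}(\R\times(0,T^*))$ (already known, since the maximal solution lies in $X_{T^*}$), the coefficients $\alpha_i(y_i),\beta_i(y_i)$ and the right-hand side $f_i(y_i,u_1,u_2)$ are, on any strip $\R\times[\delta,T^*-\delta]$, H\"older continuous in $x$ (in fact $C^1$ in $x$ on compact sets, since $y_i$ is built from $y_{i,0}\in C^2$ with bounded $(y_{i,0})'$ and from $\int_0^tf(u_i)$, whose $x$-derivative is controlled by $\partial_x u_i$). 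Then I would differentiate the equation $(u_i)_t=\alpha_i(y_i)(u_i)_{xx}-\beta_i(y_i)(u_i)_x+f_i(y_i,u_1,u_2)$ once in $x$: $v_i:=\partial_x u_i$ satisfies a linear parabolic equation $\mathcal{L}_i v_i=g_i$, where $g_i$ collects terms involving $\partial_x(\alpha_i(y_i)),\partial_x(\beta_i(y_i)),\partial_x f_i$ and at worst $(u_i)_{xx}$, $(u_i)_x$ — all H\"older continuous in $x$ locally. Applying the interior Schauder estimates for parabolic equations (or, equivalently, the representation \eqref{integral representation} for $v_i$ on a slightly smaller strip with H\"older data) gives $v_i\in C^{2,1}$, i.e. $\partial_x^3 u_i$ and $\partial_t\partial_x u_i$ exist and are continuous on $\R\times(0,T^*)$.

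The main obstacle I expect is the careful justification of the $C_0(\R)$ conclusion for the Duhamel (time-integral) term: one must confirm that $\tau\mapsto\|f_i(y_i,u_1,u_2)(\cdot,\tau)\|_{L^p}$ is in $L^1(0,t)$ (this follows from the Lipschitz bound $|f_i(y_i,u_1,u_2)|\le C(|u_1|+|u_2|)$ valid because $f_i(y_i,0,0)=0$ and $f_i$ is Lipschitz on the bounded sector, together with $u\in L^\infty_{\mbox{\tiny loc}}((0,T^*);L^p)$), and then that the integrated convolution genuinely decays at $|x|\to\infty$ rather than merely being bounded — handled by dominated convergence after the $L^p$–$L^q$ bound, splitting the $\xi$-integral into $|\xi|\le M$ and $|\xi|>M$. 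The secondary technical point is checking that the coefficient $g_i$ in the differentiated equation really is locally H\"older in $x$ uniformly in $t$ on interior strips, which needs the hypothesis $y_{i,0}\in C^2(\R)$ with $(y_{i,0})'$ bounded (assumed in Theorem \ref{global}) and the already-established boundedness of $\partial_x u_i$; once this is in place the Schauder bootstrap is routine.
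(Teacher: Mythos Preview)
Your proposal is essentially the same approach as the paper's: use the integral representation and the $L^p$--$L^q$ convolution decay result for the $C_0$ claim, then bootstrap parabolic regularity for the higher derivatives. The paper's treatment of the Duhamel term is slightly more streamlined --- for each fixed $\tau<t$ the inner convolution $G(\cdot,t,\tau)=\int\partial_x\Gamma(\cdot,t,\xi,\tau)f_i(\xi,\tau)\,d\xi$ is in $C_0(\R)$ by the same $L^p$--$L^q$ argument (using $f_i(y_i,0,0)=0$ so $f_i(\cdot,\tau)\in L^p$), and since $|G(x,t,\tau)|\le K(t-\tau)^{-1/2}$, dominated convergence gives $W(\cdot,t)\in C_0(\R)$ --- but your route arrives at the same place.

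One caution: your phrase ``the already-established boundedness of $\partial_x u_i$'' in the bootstrap step is potentially circular. The \emph{global} bound on $\partial_x u_i$ over $\R\times(0,T^*)$ is Corollary~\ref{uix is bounded}, which is proved \emph{after} this lemma and in fact relies on the existence of $\partial_x^3 u_i$ established here (through Lemma~\ref{lem2.4}). What you actually need for the interior Schauder bootstrap is only \emph{local} boundedness and H\"older regularity of $\partial_x u_i$, which is immediate from $u_i\in C^{2,1}(\R\times(0,T^*))$. The paper handles this by citing Friedman's interior regularity theorem twice: first to get $\partial_x u_i$ locally H\"older (whence the $x$-derivatives of the coefficients and $(F_i)_x$ are locally H\"older), and once more to obtain $\partial_x^3 u_i$ and $\partial_t\partial_x u_i$. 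Your differentiated-equation-plus-Schauder formulation is equivalent once the regularity hypotheses on $g_i$ are phrased locally; just make sure you do not invoke Corollary~\ref{uix is bounded} here.
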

\begin{proof}
To prove the first part, for fixed $t \in (0, T^*)$ we use \eqref{integral representation for ui} to write
\begin{align*}
\partial_x u_i(x,t)=& \int\partial_x\Gamma(x,t,\xi,0)u_{i,0}(\xi)d\xi+
\int_0^t\int\partial_x\Gamma(x,t,\xi,\tau)F_i(\xi,\tau)d\xi d\tau\\
\equiv & \ V(x,t)+W(x,t)
\end{align*}
where $F_i(\xi,\tau)=f_i(y_i,u_1,u_2)(\xi,\tau)$.
Noting that 
$|V(x,t)|\le \int\frac{K}{t}\mbox{e}^{-C\frac{(x-\xi)^2}{t}}|u_{i,0}(\xi)|d\xi$ and using that $u_{i,0}\in L^p$ and 
$\mbox{e}^{-C\frac{(.)^2}{t}}\in L^q$, for $1<p<\infty$ and $q$ being the conjugate exponent of $p$,
it follows 
that $V(.,t)$ belongs to $C_0(\R)$. 
Now, for fixed $\tau$, $\tau<t$, the same argument proves also that  
$G(\cdot,t,\tau):=\int\partial_x\Gamma(\cdot,t,\xi,\tau)F_i(\xi,\tau)d\xi\in C_0(\R)$.
Since $|G(x,t,\tau)|\leq \frac{K}{(t-\tau)^\frac{1}{2}}$ and this latter function is integrable on $[0, t]$, it follows from the Lebesgue's dominated convergence theorem that $W(.,t)\in C_0(\R)$ as well.

To prove the second part, for fixed $T\in (0,T^*)$ we observe that  $u_i\in C^{1,\frac{1}{2}}(\R\times[0,T])$ and $w_i=u_i$ satisfies the parabolic equation 
$\mathcal{L}_{[v(u_i)]}(w_i)=F_i(y_i,u_1,u_2)$ in $\R\times(0,T]$.
Given that the coefficients of this equation and $F_i$ are H\"older continuous functions, it follows from Theorem \cite[p. 72]{Fri} that $\partial_xu_i$ is locally H\"older continuous, which implies that the coefficients of this equation has derivative with respect to $x$ locally H\"older
continuous and $(F_i)_x$ is also locally H\"older continuous. Then, again from Theorem 
\cite[p. 72]{Fri} we obtain the existence of the derivatives $\partial_x^3u_i$ and 
$\partial_t\partial_xu_i$. \hfill \end{proof}

\smallskip

Next we show that $\partial_xu_i$ is bounded, in $\R\times(0,T^*)$. It is enough to show 
this bound in $\R\times(T,T^*)$ for a $T\in (0,T^*)$, 
since $u_i\in C^{1,\frac{1}{2}}(\R\times[0,T])$, for any $T\in (0,T^*)$.
We start with the following lemma.

\smallskip

\begin{lemma}
\label{lem2.4} Let $\epsilon\in (0,T^*)$. There is a constant $K$ with the
following property: Let $a<b$ in $\mathbb{R}$ and $T\in(\epsilon,T^*)$. If
the maximum value $q_i$ of $|\partial_xu_i|$ in $[a,b]\times[\epsilon,T]$  
 is attained in a point in $(a,b)\times(\epsilon,T]$, or, 
$q_i$ is attained in a point in $(a,b)\times(\epsilon,T]$ and 
$\partial_xu_j$ is bounded in $\R\times(0,T^*)$, for $i,j=1,2$ and $i\not=j$, 
then 
$q_i\le K$. 
\end{lemma}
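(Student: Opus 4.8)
The strategy is the classical "method of auxiliary functions" used by Oleinik--Kruzhkov: rather than estimating $\partial_x u_i$ directly, I would substitute $u_i = h_i(v_i)$ for a carefully chosen smooth, increasing, bounded change of variable $h_i$ (with $h_i'$ bounded above and below away from zero on the range of $u_i$, which is controlled because $0\le u_i\le\varphi(t)$ on $[0,T^*)$ and $\varphi$ is bounded on bounded time intervals), and then bound $|\partial_x v_i|$ at an interior maximum point. First I would derive the equation satisfied by $v_i$: differentiating $u_i=h_i(v_i)$ and inserting into $(u_i)_t-\alpha_i(y_i)(u_i)_{xx}+\beta_i(y_i)(u_i)_x=f_i(y_i,u_1,u_2)$ gives, after dividing by $h_i'(v_i)>0$,
\begin{equation*}
(v_i)_t-\alpha_i(y_i)(v_i)_{xx}-\alpha_i(y_i)\frac{h_i''(v_i)}{h_i'(v_i)}(v_i)_x^2+\beta_i(y_i)(v_i)_x=\frac{f_i(y_i,h_1(v_1),h_2(v_2))}{h_i'(v_i)}.
\end{equation*}
Then I differentiate this equation with respect to $x$ and set $p_i=\partial_x v_i$; at a point where $|p_i|$ attains its maximum over the rectangle $[a,b]\times[\epsilon,T]$ — and by hypothesis this point lies in the interior in $x$ and at a time $>\epsilon$ — one has $\partial_x p_i=0$, $\partial_t p_i\ge 0$ (if it is an interior time) or we are at $t=T$ so $\partial_t p_i\ge 0$ still holds (sub-level set argument), and the second $x$-derivative term has the favorable sign: $-\alpha_i(y_i)\partial_{xx}p_i\ge 0$ there.

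The key point is the sign of the term $-\alpha_i(y_i)\frac{h_i''}{h_i'}p_i^2$ after differentiation: differentiating it in $x$ produces, among lower-order terms, a contribution $-2\alpha_i(y_i)\frac{h_i''}{h_i'}p_i\,\partial_x p_i$ which vanishes at the max point, plus a term of the form $-\alpha_i(y_i)\big(\tfrac{h_i''}{h_i'}\big)'p_i^3$; choosing $h_i$ so that $-\big(\tfrac{h_i''}{h_i'}\big)'=\big(\log h_i'\big)''$ has a definite favorable sign and is bounded below by a positive constant (e.g. $h_i'(v)=e^{-Nv}$ type, or the bounded analogue $h_i(v)=\tfrac1N(1-e^{-Nv})$ suitably rescaled to cover $[0,\max\varphi]$) makes this term dominate, giving an inequality of the shape $c\,|p_i|^3\le C(1+|p_i|^2+|p_i|)$ at the maximum point, whence $|p_i|\le K$. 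Here the constants $c,C$ depend only on the explicit forms of $\alpha_i$, $\beta_i$, $f_i$ in \eqref{functions}, on the bounds $0\le y_i\le\|y_{i,0}\|_\infty$ and $0\le u_i\le\varphi(T^*)$, and on the parameters and $\epsilon$ (through $\varphi$ on $[\epsilon,T^*]$, and through the need to absorb the terms coming from $\partial_x y_i$, which is where the hypothesis $y_{i,0}\in C^2$ with $(y_{i,0})'$ bounded enters — so that $\|\partial_x\alpha_i(y_i)\|_\infty$, $\|\partial_x\beta_i(y_i)\|_\infty$ are under control, uniformly in $t<T^*$, using $|\partial_x y_i|\le |(y_{i,0})'|e^{-A_i\int_0^t f}+A_i\|y_{i,0}\|_\infty\int_0^t|\partial_x f(u_i)|\,d\tau$ and the not-yet-known bound on $\partial_x u_i$ — so this step must be set up so that the bound on $\partial_x u_i$ we are proving does not feed circularly into the constant $K$; this is arranged exactly by the extra hypothesis in the mixed case that $\partial_x u_j$ is already bounded, and by keeping $\partial_x u_i$ only inside the cubic term with the good sign).

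The main obstacle is precisely the bookkeeping of the gradient terms produced by differentiating the variable coefficients $\alpha_i(y_i)$, $\beta_i(y_i)$ and the reaction $f_i(y_i,u_1,u_2)$ in $x$: these generate terms involving $\partial_x y_i$ and $\partial_x u_j$, and one must check that in the two allowed configurations — either the max of $|\partial_x u_i|$ is attained strictly inside (so one can localize on a rectangle where a crude a priori bound is available from $C^{1,1/2}(\R\times[0,T])$ on each finite slab, applied on $[a,b]$ whose length we may take of order one), or it is attained inside with $\partial_x u_j$ a priori bounded — every term other than the cubic one in $|\partial_x u_i|$ carries a power $\le 2$ in $|\partial_x u_i|$ and a bounded coefficient, so Young's inequality closes the estimate. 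I would first do the pure case $i=j$ (one unknown frozen via the bound on the other through the sector), then note the mixed case is identical once $\|\partial_x u_j\|_{L^\infty(\R\times(0,T^*))}$ is inserted as a constant. The constant $K$ so obtained depends only on $\epsilon$, the parameters, the data norms, and (in the mixed case) $\|\partial_x u_j\|_\infty$, but not on $a,b$ or $T$, as claimed.
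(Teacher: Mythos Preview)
Your overall strategy matches the paper's exactly: the Oleinik--Kruzhkov substitution $u_i=h_i(v_i)$, differentiate the $v_i$-equation in $x$, evaluate at an interior maximum of $p_i=(v_i)_x$, and let a cubic term dominate. But two points in your execution would fail as written.

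First, your example $h_i'(v)=e^{-Nv}$ gives $\log h_i'=-Nv$, hence $(h_i''/h_i')'=(\log h_i')''=0$, and the cubic term $-\alpha_i(h_i''/h_i')'\,p_i^3$ vanishes identically. You need $\log h_i'$ \emph{strictly} concave on the range of $v_i$. The paper takes $h_i'(v)=3eK^*e^{-v^{m_i}}$ (i.e.\ $h_i(v)=K^*(-2+3e\int_0^v e^{-s^{m_i}}ds)$, $K^*=\varphi(T^*)$), which gives $(h_i''/h_i')'=-m_i(m_i-1)v^{m_i-2}<0$ on the range $v\in[2/(3e),1]$, with the parameter $m_i$ free to be taken large.

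Second, your account of how the circularity through $(y_i)_x$ is broken is not right. The integral $\int_0^t f'(u_i)\,\partial_xu_i\,ds$ inside $(y_i)_x$ involves $\partial_xu_i$ with the \emph{same} index $i$, so the hypothesis that $\partial_xu_j$ is bounded does not touch it, and you cannot ``keep $\partial_xu_i$ only inside the cubic term with the good sign''. What actually happens is that at the max point $(x_i,t_i)$ one has $|(v_i)_x(x_i,s)|\le p_i$ for $s\in[\epsilon,t_i]$ (by definition of the maximum on the rectangle) together with a fixed bound for $s\in[0,\epsilon]$, whence $|(y_i)_x(x_i,t_i)|\le K_1(1+p_i)$. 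Fed into the term $(\partial_x\alpha_i)\tfrac{h_i''}{h_i'}p_i^2$ this produces a \emph{competing} cubic contribution of order $K_1 m_i v_i^{m_i-1}p_i^3$; the reason for the specific choice of $h_i$ with a large free parameter $m_i$ is precisely that the favourable coefficient $\alpha_i m_i(m_i-1)v_i^{m_i-2}$ dominates this once $m_i-1>K_1b_i/a_i$. Finally, $\partial_xf_i$ also brings in $\partial_xu_j$, so in the first alternative the resulting inequality reads $c_ip_i^3-d_ip_i^2-e_ip_i-1\le p_j$ for each $i$, a coupled system whose positive solution set is bounded; the second alternative simply replaces $p_j$ by a constant and one inequality suffices.
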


\begin{proof} 
Following \cite{Ole} (or \cite{Ole-v1,Ole-v2}; see \cite[p. 107]{Ole}), we define a new function $v_i$ by the equation
\begin{equation}
\label{vi}
u_i=h_i(v_i) :=
K^*(-2+3\mbox{e}\int_0^{v_i}\mbox{e}^{-s^{m_i}}ds),
\end{equation}
where $m_i$ is a sufficiently large constant 
and $K^*:=\varphi(T^*)$. ($K^*$ is a constant that bounds $u_i$. Recall that the maximal solution is in the sector $\langle 0,\varphi\rangle_{T^*}$
and we are assuming that $T^*<\infty$, in order to obtain a contradiction.)
We notice that
, for any positive numbers $m_i$, we have $v_i\ge 2/(3\mbox{e})$ and, $v_i\leq \overline{v}:=(\frac{\varphi(T)}{K^*}+2)/3$ for $t\le T\le T^*$.
Indeed, $-2+3e\int_0^{\frac{2}{3{e}}}{e}^{-s^{m_i}}ds
 \leq -2+3{e}\int_0^{\frac{2}{3{e}}}ds=0$, so it must be $v_i\ge 2/(3{e})$
in order that $u_i\ge 0$. On the other hand, if $t\le T\le T^*$ then 
$u_i\le\varphi(T)$, since $u=(u_1,u_2)$ belongs to the sector $\langle 0,\varphi\rangle_{T^*}$, thus, from the equation \eqref{vi} we have \ 
$e\int_0^{v_i}{e}^{-s^{m_i}}ds\le \overline{v}$. But
${e}\int_0^{\overline{v}}{e}^{-s^{m_i}}ds\ge {e}\int_0^{\overline{v}}ds=\overline{v}$, 
then $v_i\le\overline{v}$.

Now making the substitution $u_i=h_i(v_i)$ in the equation \eqref{eq1-general} (with the
constitutive functions \eqref{functions}), we have that $v_i$ satisfies the following equation:

$(v_i)_t-\left(\frac{\lambda_i}{a_i+b_iy_i}\right)(v_i)_{xx}+\left(\frac{c_i}{a_i+b_iy_i}\right)(v_i)_{x}-
(\frac{\lambda_i}{a_i+b_iy_i})\frac{h_i''}{h_i'}(v_i)^2_{x}=
\frac{f_i(y_i,h_1(v_1),h_2(v_2))}{h_i'(v_i)}$.

\noindent
Differentiating with respect to $x$, we have
\begin{equation}
\label{eq2.12}
\begin{array}{rl}
&(v_i)_{tx}+\frac{\lambda_ib_i{(y_i)}_x}{(a_i+b_iy_i)^2}(v_i)_{xx}-\frac{\lambda_i}{a_i+b_iy_i}(v_i)_{xxx}-
\frac{c_ib_i{(y_i)}_x}{(a_i+b_iy_i)^2}(v_i)_{x}+\frac{c_i}{a_i+b_iy_i}(v_i)_{xx}\\
&+\frac{\lambda_ib_i{(y_i)}_x}{(a_i+b_iy_i)^2}\frac{h_i''}{h_i'}(v_i)^2_x-
\frac{\lambda_i}{a_i+b_iy_i}\left(\frac{h_i''}{h_i'}\right)'(v_i)^3_x-
\frac{2\lambda_i}{a_i+b_iy_i}\frac{h_i''}{h_i'}(v_i)_{x}(v_i)_{xx}\\
&=(\frac{f_i(y_i,h_1(v_1),h_2(v_2))}{h_i'(v_i)})_x.
\end{array}
\end{equation}

Let $a,b,\epsilon, T$ as in the statement of the lemma. 
If the maximum value of $|(u_i)_x|$ in $[a,b]\times [\epsilon,T]$ is attained
in $(a,b)\times(\epsilon,T]$, then, the same holds for $\frac{1}{3eK^*}|(u_i)_x|$
and, as a consequence, this is also true for $|(v_i)_x|$, if $m_i$ is chosen
sufficiently large. Indeed,  $|(u_i)_x(x,t)|=3eK^*\mbox{e}^{-v_i(x,t)^{m_i}}|(v_i)_x(x,t)|$, so, for any
$(x,t)\in[a,b]\times[\epsilon,T]$, we have
\begin{align*}
|\frac{1}{3eK^*}|(u_i)_x(x,t)|-|(v_i)_x(x,t)||&\leq |\frac{1}{3eK^*}|(u_i)_x(x,t)|-
\frac{{e}^{v_i(x,t)^{m_i}}}{3eK^*}|(u_i)_x(x,t)||\\
&\leq|1-{e}^{v_i(x,t)^{m_i}}|\frac{1}{3eK^*}|(u_i)_x(x,t)|\\
&\leq|1-{e}^{\overline{v}^{m_i}}|\frac{1}{3eK^*}q_i,\\
\end{align*}
where, as in the statement of the lemma, $q_i$ is the maximum of $|(u_i)_x|$ in $[a,b]\times[\epsilon,T]$. Then, $|(v_i)_x|$
converges to $\frac{1}{3eK^*}|(u_i)_x(x,t)|$, uniformly in $(x,t)$, when $m_i$ tends to
infinity. The uniform convergence together with the fact that the maximum of $|(u_i)_x|$ occurs only
in $(a,b)\times(\epsilon,T]$ ensures that the maximum of $|(v_i)_x|$ also occurs only in
$(a,b)\times(\epsilon,T]$, if we take $m_i$ sufficiently large.


Let $p_i$ be the maximum value of $|(v_i)_x|$ in $[a,b] \times [\epsilon,T]$. By the hypothesis,
in the first alternative in the statement of the lemma, there is a point $(x_i,t_i)$ in 
$(a,b)\times(\epsilon,T]$ such that 
$(v_i)_x(x_i,t_i)=\pm p_i$. Since we want to estimate $|\partial_x v_i|$, we can assume $p_i>0$,
without loss of generality. 

Initially we consider the case where $(v_i)_x(x_i,t_i)=p_i$. 
Then, $(v_i)_{xt}(x_i,t_i)\geq 0$, $(v_i)_{xx}(x_i,t_i)=0$ and $(v_i)_{xxx}(x_i,t_i)\leq 0$,
so, computing \eqref{eq2.12} in $(x_i,t_i)$, we obtain
\begin{equation}
\label{eq2.13}
-\frac{c_ib_i{(y_i)}_x}{(a_i+b_iy_i)^2}p_i+\frac{\lambda_ib_i{(y_i)}_x}{(a_i+b_iy_i)^2}
\frac{h_i''}{h_i'}p_i^2-\frac{\lambda_i}{a_i+b_iy_i}\left(\frac{h_i''}{h_i'}\right)'p_i^3\leq
(\frac{f_i(y_i,h_1(v_1),h_2(v_2))}{h_i'(v_i)})_x.
\end{equation}
From the definition of $h_i$, equation \eqref{vi}, we have
\begin{equation}
\label{eq2.14}
\begin{array}{l}
h_i'=3eK^* \mbox{e}^{-v^{m_i}},\quad
h_i''=-3eK^* m_iv^{m_i-1}\mbox{e}^{-v^{m_i}},\\
\frac{h_i''}{h_i'}=-m_iv^{m_i-1},\quad
{\left(\frac{h_i''}{h_i'}\right)}'=-m_i(m_i-1)v^{m_i-2},
\end{array}\end{equation}
and from \eqref{eq2.6},
\begin{equation}
\label{eq2.15}
\begin{array}{rl}
(y_i)_x=&y_{i0}'(x)\mbox{e}^{-A_i\int_0^tf(h_i(v_i))ds}\\
&+y_{i0}(x)\mbox{e}^{-A_i\int_0^tf(h_i(v_i))ds}
(-A_i\int_0^tf'(h_i(v_i))h_i'(v_i)(v_i)_x(x,s)ds),
\end{array}
\end{equation}
So, at the point $(x_i,t_i)$, we obtain the estimate
\begin{equation}
\label{eq2.16}
|(y_i)_x|\leq K(1+3T^*eK^* p_i)\leq K_1(1+p_i),
\end{equation}
where $K_1$ is a constant independent of the interval $[a,b]$ and of $T$. 
%
On the other hand,
$$
\begin{array}{rl}
&f_i(y_i, h_1(v_1),h_2(v_2))_x\\
=&-[(b_iA_ih_i(v_i)+d_i)y_if(h_i(v_i))+
(-1)^iq(h_1(v_1)-h_2(v_2))](a_i+b_iy_i)^{-2}b_i(y_i)_x\\
&+\left\{b_iA_ih_i'(v_i)(v_i)_xy_if(h_i(v_i))\right.\\
& \ \ +\left. (b_iA_ih_i(v_i)+d_i)[(y_i)_xf(v_i)+y_if'(h_i(v_i))h_i'(v_i)(v_i)_x]\right.\\
& \ \ \ \ \ \left. + (-1)^iq(h_1'(v_1)(v_1)_x-h_2'(v_2)(v_2)_x)\right\}(a_i+b_iy_i)^{-1},
\end{array}
$$
therefore, at $(x,t)=(x_i,t_i)$, 
$|f_i(y_i,h_1(v_1),h_2(v_2))_x|\leq K(1+p_i+p_j)$, $i\not=j$,
with $K$ being a constant independent of $a,b$ and $T$.
As $(f_i/h_i'(v_i))_x=[(f_i)_xh_i'(v_i)-f_ih_i''(v_i)(v_i)_x](h_i'(v_i))^{-2}$, it follows that
\begin{equation}
\label{eq2.17}
|(\frac{f_i}{h_i'(v_i)})_x|\leq
\frac{|(f_i)_x||h_i'(v_i)|-|f_i||h_i''(v_i)||(v_i)_x|}{(h_i'(v_i))^2}\leq K(1+p_i+p_j),
\end{equation}
$i\not= j$. Substituting \eqref{eq2.14}, \eqref{eq2.16} and \eqref{eq2.17} in \eqref{eq2.13}, we arrive at
\begin{align*}
&-\frac{c_ib_iK_1(1+p_i)}{(a_i+b_iy_i)^2}p_i-
\frac{\lambda_ib_iK_1(1+ p_i)m_iv_i^{m_i-1}}{(a_i+b_iy_i)^2}p_i^2+
\frac{\lambda_im_i(m_i-1)v_i^{m_i-2}}{a_i+b_iy_i}p_i^3\\
\leq& K(1+p_i+p_j)
\end{align*}
i.e.
\begin{align*}
&-\frac{c_ib_iK_1(1+p_i)}{(a_i+b_iy_i)^2}p_i-
\frac{\lambda_ib_iK_1m_iv_i^{m_i-1}}{(a_i+b_iy_i)^2}p_i^2+((m_i-1)-
\frac{b_iv_iK_1}{(a_i+b_iy_i)} )\frac{\lambda_im_iv_i^{m_i-2}}{(a_i+b_iy_i)}p_i^3\\
\leq& K(1+p_i+p_j).
\end{align*}
As $0\leq y_i\leq \|y_{i,0}\|_\infty$ and $2/(3e)\leq v_i\leq 1$, we obtain
\begin{align*}
&-\frac{c_ib_iK_1(1+p_i)}{a_i^2}p_i-\frac{\lambda_ib_iK_1m_i}{a_i^2}p_i^2+((m_i-1)-\frac{b_iK_1}{a_i}
)\frac{\lambda_im_iv_i^{m_i-2}}{(a_i+b_iy_i)}p_i^3\\
\leq& K(1+p_i+p_j).
\end{align*}
Recall that $K_1$ in the inequality \eqref{eq2.16} does not depend on $m_i$. Thus, we can take $m_i$
large enough such that $p_1$ and $p_2$ satisfy
\begin{equation}
\label{eq2.18}
\left\{\begin{array}{l}
c_1 p_1^3-d_1 p_1^2-e_1 p_1-1\leq p_2\\
c_2p_2^3-d_2p_2^2-e_2p_2-1\leq p_1
\end{array}\right.
\end{equation}
where $c_i,d_i,e_i$ are positive constants and independent of $a,b$ and $T$. 
In the first quadrant, i.e. $p_1\geq0$ and
$p_2\geq 0$, the region defined by \eqref{eq2.18} is bounded. Therefore, there is a constant
$\overline{K}(T^*)$, such that  $0<p_i\leq \overline{K}(T^*)$.

Similarly, we can show that the same occurs when $\partial_xv_1(x_1,t_1)=p_1$ and\break\hfill 
$\partial_xv_2(x_2,t_2)=-p_2$ or $\partial_xv_1(x_1,t_1)=-p_1$  and $\partial_xv_2(x_2,t_2)=-p_2$.
Indeed, suppose now that $\partial_xv_1(x_1,t_1)=p_1$  and $\partial_xv_2(x_2,t_2)=-p_2$.
In this case we have $(v_1)_{xt}(x_1,t_1)\geq 0$, $(v_1)_{xx}(x_1,t_1)=0$ and $(v_1)_{xxx}(x_1,t_1)\leq0$.
While at the point $(x_2,t_2)$, we have $(v_2)_{xt}(x_2,t_2)\leq 0$, $(v_2)_{xx}(x_2,t_2)=0$ and
$(v_2)_{xxx}(x_2,t_2)\geq0$.
If we compute \eqref{eq2.12} at $(x_i,t_i)$, we obtain
\begin{equation}
\left\{\begin{array}{l}
-\frac{c_1b_1{(y_1)}_x}{(a_1+b_1y_1)^2}p_1
+\frac{\lambda_1b_1{(y_1)}_x}{(a_1+b_1y_1)^2}\frac{h_1''}{h_1'}p_1^2-
\frac{\lambda_1}{a_1+b_1y_1}(\frac{h_1''}{h_1'})'p_1^3\leq (\frac{f_1(y_1,h_1(v_1),h_2(v_2))}{h_1'(v_1)})_x\\
\frac{c_2b_2{(y_2)}_x}{(a_2+b_2y_2)^2}p_2
+\frac{\lambda_2b_2{(y_2)}_x}{(a_2+b_2y_2)^2}\frac{h_2''}{h_2'}p_2^2+
\frac{\lambda_2}{a_2+b_2y_2}(\frac{h_2''}{h_2'})'p_2^3\geq 
(\frac{f_2(y_2,h_1(v_1),h_2(v_2))}{h_2'(v_2)})_x.\end{array}\right.
\end{equation}
\begin{equation}
\left\{\begin{array}{l}
-\frac{c_1b_1K_1(1+p_1)}{(a_1+b_1y_1)^2}p_1-\frac{\lambda_1b_1K_1(1+p_1)m_1v_1^{m_1-1}}{(a_1+b_1y_1)^2}p_1^2+
\frac{\lambda_1m_1(m_1-1)v_1^{m_1-2}}{a_1+b_1y_1}p_1^3\leq K(1+p_1+p_2)\\
\frac{c_2b_2K_1(1+p_2)}{(a_2+b_2y_2)^2}p_2+\frac{\lambda_2b_2K_1(1+p_2)m_2v_2^{m_2-1}}{(a_2+b_2y_2)^2}p_2^2-
\frac{\lambda_2m_2(m_2-1)v_2^{m_2-2}}{a_2+b_2y_2}p_2^3\geq-K(1+p_1+ p_2)
\end{array}\right.
\end{equation}
Again, for $m_i$ large enough, we have that $p_1$ and  $p_2$ satisfy 
a system of the type \eqref{eq2.18}, and therefore $(p_1,p_2)$ is in a bounded region of the plane.

Finally, suppose that $(v_1)_x(x_1,t_1)=-p_1$ and $(v_2)_x(x_2,t_2)=-p_2$. In this case, at the point $(x_i,t_i)$ we have that $(v_i)_{xt}(x_i,t_i)\leq 0$, $(v_i)_{xx}(x_i,t_i)=0$ and
$(v_i)_{xxx}(x_i,t_i)\geq0$. 
If we compute \eqref{eq2.12} at the point $(x_i,t_i)$, we obtain
\begin{equation}
\frac{c_ib_i{(y_i)}_x}{(a_i+b_iy_i)^2}p_i+\frac{\lambda_ib_i{(y_i)}_x}{(a_i+b_iy_i)^2}
\frac{h_i''}{h_i'}p_i^2+\frac{\lambda_i}{a_i+b_iy_i}\left(\frac{h_i''}{h_i'}\right)'p_i^3\geq
(\frac{f_i(y_i, h_1(v_1),h_2(v_2))}{h_i'(v_i)})_x
\end{equation}

$
\frac{c_ib_iK_1(1+p_i)}{(a_i+b_iy_i)^2}p_2+\frac{\lambda_ib_iK_1(1+p_i)m_iv_i^{m_i-1}p_i^2}{(a_i+b_iy_i)^2}-
\frac{\lambda_im_i(m_i-1)v_i^{m_i-2}p_i^3}{a_i+b_iy_i}\geq-K(1+p_1+p_1),
$\\

\noindent
and, again, for $m_i$ large enough, 
analogously as in the previous cases, we infer that $(p_1, p_2)$ is bounded, independently of
$a,b$ and $T$.

To prove the lemma under the second hypothesis alternative, it is enough to notice that if 
$\partial_xv_2$ is bounded in $\R\times(0,T^*)$, the first inequality in
\eqref{eq2.18} is sufficient to assure that $\partial_xv_1$ is bounded, and similarly, for
the case that $\partial_xv_1$ is bounded. 
%
\  \ \end{proof}

\smallskip

In the next lemma, using that $u\in L_{\!\!\!\mbox{ \tiny{loc} }}^\infty( (0,\infty) ; L^p(\R))$, we show that $u(x,t)$ decay to zero when $|x|\to\infty$, uniformly
with respect to $t$ in any compact interval in $(0,T^*)$.

\begin{lemma}
\label{lem2.7}
Let $[\overline{t},\overline{\overline{t}}]\subset(0,T^*)$ \ 
($0<\overline{t}<\overline{\overline{t}}<T^*$). 
Then $\lim_{|x|\to\infty}|u(x,t)|=0$, uniformly with respect to 
$t\in [\overline{t},\overline{\overline{t}}]$. 
\end{lemma}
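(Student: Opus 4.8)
The plan is to exploit the integral representation \eqref{integral representation for ui} for $u_i$ on $\R\times[0,T^*)$, splitting it as $u_i(x,t)=V_i(x,t)+W_i(x,t)$ with $V_i(x,t)=\int_\R\Gamma_i(x,t,\xi,0)u_{i,0}(\xi)\,d\xi$ and $W_i(x,t)=\int_0^t\int_\R\Gamma_i(x,t,\xi,\tau)F_i(\xi,\tau)\,d\xi\,d\tau$, where $F_i=f_i(y_i,u_1,u_2)$. The term $V_i$ is handled exactly as in the proof of Lemma \ref{lem2.3}: by Corollary \ref{uniform L} we have $|\Gamma_i(x,t,\xi,0)|\le K t^{-1/2}\mbox{e}^{-C(x-\xi)^2/t}$, so $|V_i(x,t)|$ is bounded by a constant times the convolution of $|u_{i,0}|\in L^p(\R)$ with the Gaussian $t^{-1/2}\mbox{e}^{-C(\cdot)^2/t}\in L^q(\R)$ ($1/p+1/q=1$); the cited fact from \cite[p. 241]{Fol} gives $\sup_x|(f*g)(x)|\le\|f\|_{L^p}\|g\|_{L^q}$ and $f*g\in C_0(\R)$. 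Since on the compact interval $[\overline{t},\overline{\overline{t}}]$ the $L^q$ norm of the Gaussian is bounded below and above (it equals $\mathrm{const}\cdot t^{(1-q)/(2q)}$), one gets a modulus of decay for $V_i(x,t)$ that is uniform in $t\in[\overline{t},\overline{\overline{t}}]$.

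The term $W_i$ is the one requiring care, and it is where I expect the main obstacle. Here I would split the $\tau$-integral at $\tau=\overline{t}/2$. On $[\overline{t}/2,t]$ the integrand has $t-\tau$ bounded below by nothing useful (it can be small), but I would instead use that $u\in L^\infty_{\mathrm{loc}}((0,\infty);L^p(\R))$ — so $\|u_i(\cdot,\tau)\|_{L^p}$ is bounded for $\tau\in[\overline{t}/2,\overline{\overline{t}}]$ — together with the bound $|F_i(\cdot,\tau)|\le C(1+|u_1(\cdot,\tau)|+|u_2(\cdot,\tau)|)$ coming from the explicit form \eqref{functions} and $0\le y_i\le\|y_{i,0}\|_\infty$, to conclude $\|F_i(\cdot,\tau)\|_{L^p}$ is bounded on that subinterval (using that the sector bound makes $u$ bounded, so $F_i$ actually lies in $L^p$ with a norm controlled by $\|u_i(\cdot,\tau)\|_{L^p}$). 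Then $\int_\R\Gamma_i(x,t,\xi,\tau)F_i(\xi,\tau)\,d\xi$ is a convolution-type integral of an $L^p$ function against the Gaussian kernel $\le K(t-\tau)^{-1/2}\mbox{e}^{-C(x-\xi)^2/(t-\tau)}\in L^q$, hence lies in $C_0(\R)$ with sup-bound $\le K(t-\tau)^{-1/2}\|F_i(\cdot,\tau)\|_{L^p}$; the $\tau$-integral of $(t-\tau)^{-1/2}$ over $[\overline{t}/2,t]$ converges, and dominated convergence as in Lemma \ref{lem2.3} shows this part of $W_i$ is in $C_0(\R)$ uniformly for $t$ in the compact interval. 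On the remaining piece $\tau\in[0,\overline{t}/2]$ one has $t-\tau\ge\overline{t}/2$, so the kernel is bounded and the Gaussian decays in $|x-\xi|$ uniformly; since $F_i$ is merely bounded there (by the sector bound), $|\int_\R\Gamma_i(x,t,\xi,\tau)F_i(\xi,\tau)\,d\xi|$ can be estimated by $\mathrm{const}\cdot\int_\R(t-\tau)^{-1/2}\mbox{e}^{-C(x-\xi)^2/(t-\tau)}\,d\xi$ restricted to, say, $|\xi|\le|x|/2$ plus the contribution from $|\xi|\le\overline{\overline{t}}$ — more simply, one writes $F_i$ on $[0,\overline t/2]$ as a bounded function and notes that $\int_\R\Gamma_i\,d\xi=1$ but the \emph{tail} $\int_{|\xi|>R}\Gamma_i\,d\xi\to0$ as $|x|\to\infty$ uniformly for $\tau\le\overline t/2$, so it suffices additionally to know that $F_i(\xi,\tau)\to0$ as $|\xi|\to\infty$; this last point follows because $u_i(\cdot,\tau)\in C_0(\R)$ for each $\tau\in(0,T^*)$ (already being in the first part of the argument, or directly: for \emph{fixed} small $\tau$, $u_i(\cdot,\tau)$ is in $C_0$ by Lemma \ref{lem2.3}).

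Assembling the pieces: $|u_i(x,t)|\le|V_i(x,t)|+|W_i(x,t)|$, and each summand has been shown to tend to $0$ as $|x|\to\infty$ with a rate independent of $t\in[\overline{t},\overline{\overline{t}}]$, so $\lim_{|x|\to\infty}|u(x,t)|=0$ uniformly on $[\overline{t},\overline{\overline{t}}]$. The delicate point — the genuine obstacle — is the uniform-in-$t$ control near $\tau=t$ in $W_i$: one must be sure that the singular factor $(t-\tau)^{-1/2}$ is integrable and that the $L^p$ norm of $F_i(\cdot,\tau)$ does not blow up as $\tau\uparrow t$, which is exactly guaranteed by $u\in L^\infty_{\mathrm{loc}}((0,\infty);L^p(\R))$ combined with the linear growth of $F_i$ in $u$; this is the only place the $L^p$ hypothesis is essential, and it should be stated explicitly.
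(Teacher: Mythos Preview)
Your overall strategy---representing $u_i$ via the fundamental solution and using that convolutions of $L^p$ functions against Gaussian kernels lie in $C_0(\R)$---matches the paper's. Two points deserve comment.

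First, a discrepancy: although the lemma as stated concerns $u$, the paper's own proof actually establishes the uniform decay of $\partial_x u_i$, which is what is invoked immediately afterwards in Corollary~\ref{uix is bounded}. The argument is identical in structure (only the kernel exponent changes), so your proof for $u$ would adapt; but be aware that what is \emph{used} downstream is the version for $\partial_x u$.

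Second, and more substantively, your handling of $W_i$ has a genuine gap in the uniformity-in-$t$ step. You split at the \emph{fixed} level $\tau=\overline t/2$. On $[\overline t/2,t]$ you then appeal to dominated convergence to conclude that the integral lies in $C_0(\R)$ ``uniformly for $t$ in the compact interval,'' but dominated convergence yields decay only for each fixed $t$. The obstruction is that the $L^q$ norm of the kernel $K(t-\tau)^{-1/2}\mbox{e}^{-C(\cdot)^2/(t-\tau)}$ scales like $(t-\tau)^{-1/2+1/(2q)}$ and blows up as $\tau\uparrow t$; consequently the \emph{rate} at which $\int\Gamma_i(x,t,\xi,\tau)F_i(\xi,\tau)\,d\xi$ decays in $|x|$ depends on $t-\tau$, and after integrating in $\tau$ you cannot extract a uniform-in-$t$ rate without further argument. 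Your treatment of $[0,\overline t/2]$ is also incomplete: it would require $F_i(\xi,\tau)\to0$ as $|\xi|\to\infty$ \emph{uniformly} in $\tau\in[0,\overline t/2]$, which Lemma~\ref{lem2.3} does not supply.

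The paper sidesteps both difficulties with a different splitting: at $\tau=t-\epsilon$ (a \emph{moving} level), not at a fixed $\overline t/2$. On $[t-\epsilon,t]$ one uses only the $L^\infty$ bound on $F_i$ (from the sector) to get
\[
\int_{t-\epsilon}^t\int_{\R}\frac{K}{t-\tau}\,\mbox{e}^{-C(x-\xi)^2/(t-\tau)}|F_i(\xi,\tau)|\,d\xi\,d\tau
\;\le\; K\|F_i\|_\infty\int_{t-\epsilon}^t(t-\tau)^{-1/2}\,d\tau\;\le\;K\epsilon^{1/2},
\]
small uniformly in $(x,t)$. On $[0,t-\epsilon]$ one uses $t-\tau\ge\epsilon$ and $t-\tau\le\overline{\overline t}$ to replace the kernel bound by the \emph{$t$-independent} Gaussian $\frac{K}{\epsilon}\,\mbox{e}^{-C(x-\xi)^2/\overline{\overline t}}$, and then enlarges the $\tau$-range to $[0,\overline{\overline t}]$. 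The resulting upper bound
\[
\frac{K}{\epsilon}\int_0^{\overline{\overline t}}\!\int_{\R}\mbox{e}^{-C(x-\xi)^2/\overline{\overline t}}\,|F_i(\xi,\tau)|\,d\xi\,d\tau
\]
no longer depends on $t$ and is a convolution of the fixed Gaussian with $\int_0^{\overline{\overline t}}|F_i(\cdot,\tau)|\,d\tau\in L^p(\R)$ (by Minkowski and the uniform $L^p$ bound on $u$), hence lies in $C_0(\R)$. The term $V_i$ is handled the same way. This is the device your argument is missing: make the dominant pieces $t$-independent by crude kernel bounds, and absorb the near-diagonal singularity into an $O(\epsilon^{1/2})$ remainder controlled only by $\|F_i\|_\infty$.
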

\begin{proof} Let $t\in[\overline{t},\overline{\overline{t}}]$. For $0<\epsilon<t$, from \eqref{integral representation for ui} we have that
\begin{align*}
\partial_x&u_i(x,t)=\\
\nonumber
&=\int\partial_x\Gamma_{[v_i(u_i)]}(x,t,\xi,0)u_{i,0}(\xi)d\xi+\int_0^t\int\partial_x\Gamma_{[v_i(u_i)]}(x,t,\xi,\tau)f_i(y_i(u_i),u_1,u_2)(\xi,\tau)d\xi d\tau\\
\nonumber
&=\int\partial_x\Gamma_{[v_i(u_i)]}(x,t,\xi,0)u_{i,0}(\xi)d\xi+\int_0^{t-\epsilon}\int\partial_x\Gamma_{[v_i(u_i)]}(x,t,\xi,\tau)f_i(y_i(u_i),u_1,u_2)(\xi,\tau)d\xi d\tau\\
\nonumber
&+\int_{t-\epsilon}^t\int\partial_x\Gamma_{[v_i(u_i)]}(x,t,\xi,\tau)f_i(y_i(u_i),u_1,u_2)(\xi,\tau)d\xi d\tau
\end{align*}
therefore, using the estimate \eqref{eq0.10}, we have
\begin{align*}
    &|\partial_xu_i(x,t)|\\
\leq&\int\frac{K}{t}\mbox{e}^{-C{\frac{(x-\xi)^2}{t}}}|u_{i,0}(\xi)|d\xi+\int_0^{t-\epsilon}\int\frac{K}{t-\tau}\mbox{e}^{-C{\frac{(x-\xi)^2}{t-\tau}}}|f_i(y_i(u_i),u_1,u_2)(\xi,\tau)|d\xi d\tau\\
\nonumber
&+\int_{t-\epsilon}^t\int\frac{K}{t-\tau}\mbox{e}^{-C{\frac{(x-\xi)^2}{t-\tau}}}|f_i(y_i(u_i),u_1,u_2)(\xi,\tau)| d\xi d\tau\\
\nonumber
\leq&\int\frac{K}{\overline{t}}\mbox{e}^{-C{\frac{(x-\xi)^2}{\overline{\overline{t}}}}}|u_{i,0}(\xi)|d\xi+\int_0^{t-\epsilon}\int\frac{K}{\epsilon}\mbox{e}^{-C{\frac{(x-\xi)^2}{\overline{\overline{t}}}}}|f_i(y_i(u_i),u_1,u_2)(\xi,\tau)|d\xi d\tau\\
\nonumber
&+\int_{t-\epsilon}^t\int\frac{K}{t-\tau}\mbox{e}^{-C{\frac{(x-\xi)^2}{t-\tau}}}|f_i(y_i(u_i),u_1,u_2)(\xi,\tau)|d\xi d\tau.\\
\end{align*}
Since $u=(u_1,u_2)\in \langle 0,\varphi\rangle_{T^*}$, $(y_i,u_1,u_2)$ belongs to a bounded region in $\R^3$. If $\|f_i\|_\infty$ is the sup of $|f_i|$ in this region, we have that
\begin{align*}
    &|\partial_xu_i(x,t)|\\
\leq&\int\frac{K}{\overline{t}}\mbox{e}^{-C{\frac{(x-\xi)^2}{\overline{\overline{t}}}}}|u_{i,0}(\xi)|d\xi+\int_0^{\overline{\overline{t}}}\int\frac{K}{\epsilon}\mbox{e}^{-C{\frac{(x-\xi)^2}{\overline{\overline{t}}}}}|f_i(y_i(u_i),u_1,u_2)(\xi,\tau)|d\xi d\tau\\
\nonumber
&+K\|f_i\|_\infty\int_{t-\epsilon}^t\frac{1}{(t-\tau)^\frac{1}{2}} d\tau\\
\nonumber
\leq&\int\frac{K}{\overline{t}}\mbox{e}^{-C{\frac{(x-\xi)^2}{\overline{\overline{t}}}}}|u_{i,0}(\xi)|d\xi+\int_0^{\overline{\overline{t}}}\int\frac{K}{\epsilon}\mbox{e}^{-C{\frac{(x-\xi)^2}{\overline{\overline{t}}}}}|f_i(y_i(u_i),u_1,u_2)(\xi,\tau)|d\xi d\tau +K\epsilon^\frac{1}{2}
\end{align*}
From \cite[p. 241]{Fol}, the last two integrals of the above inequalities belong to $C_0$
and do not depend on $t\in[\overline{t},\overline{\overline{t}}]$. Thus, taking an interval $[a,b]$ for which these
integrals are less than $\epsilon^\frac{1}{2}$ in $[a,b]^c$, we have $|\partial_xu_i(x,t)|\leq K\epsilon^\frac{1}{2}$
for all $(x,t)\in [a,b]^c\times[\overline{t},\overline{\overline{t}}]$. The constant $K$ is essentially the same that
appears in the estimate \eqref{eq0.10}.
\  \ \end{proof}

\smallskip

%
%
\begin{corollary}
\label{uix is bounded}
$\partial_xu_i$ is bounded in $\R\times(0,T^*)$.
\end{corollary}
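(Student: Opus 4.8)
The plan is to combine the two lemmas just proved: Lemma \ref{lem2.4}, which gives a uniform bound on $|\partial_x u_i|$ on rectangles $[a,b]\times[\epsilon,T]$ provided the maximum is attained at an interior point (in the sense of the $x$-variable), and Lemma \ref{lem2.7}, which guarantees that $u$, hence also (via the integral representation \eqref{integral representation for ui} and the estimate \eqref{eq0.10}) $\partial_x u_i$, decays to zero as $|x|\to\infty$ uniformly for $t$ in a compact subinterval. The decay is exactly what is needed to rule out the maximum of $|\partial_x u_i|$ escaping to spatial infinity, which is the only way the interior-maximum hypothesis of Lemma \ref{lem2.4} could fail.

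First I would fix $\epsilon\in(0,T^*)$ and an arbitrary $T\in(\epsilon,T^*)$, and reduce to bounding $|\partial_x u_i|$ on $\mathbb{R}\times[\epsilon,T]$, since on $\mathbb{R}\times[0,\epsilon]$ the bound is already contained in the fact that $u_i\in C^{1,\frac12}(\mathbb{R}\times[0,T])$ for every $T<T^*$ (part of the definition of $X_{T^*}$). By Lemma \ref{lem2.7} applied to the interval $[\epsilon,T]\subset(0,T^*)$, together with the integral representation \eqref{integral representation for ui} and \eqref{eq0.10} as in the proof of that lemma, there is $N>0$ such that $|\partial_x u_i(x,t)|\le 1$ for all $|x|\ge N$ and all $t\in[\epsilon,T]$, for $i=1,2$. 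Now consider the rectangle $[a,b]\times[\epsilon,T]$ with $a=-N-1$, $b=N+1$. The supremum of $|\partial_x u_1|$ and of $|\partial_x u_2|$ over $\mathbb{R}\times[\epsilon,T]$ equals their supremum over this rectangle (outside it both are $\le1$, and both are $\ge$ something on the rectangle since we may assume they are not identically zero). On the compact rectangle the continuous function $|\partial_x u_i|$ attains its maximum $q_i$; since $|\partial_x u_i|\le1$ on the vertical sides $\{a,b\}\times[\epsilon,T]$ and on the bottom edge it is controlled by the $C^{1,\frac12}$ norm up to time $\epsilon$, by choosing $N$ (equivalently $a,b$) slightly larger and $\epsilon$ slightly smaller if necessary we may arrange that $q_i$ is actually attained at a point of $(a,b)\times(\epsilon,T]$ for at least one index.

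At this point I would run Lemma \ref{lem2.4} in a bootstrap. If $q_i$ is attained in the open rectangle for \emph{both} $i=1,2$, the first alternative of Lemma \ref{lem2.4} gives $q_1,q_2\le K$ with $K$ independent of $a,b,T$. If instead, say, $q_1$ is attained only on the boundary, then on the boundary $|\partial_x u_1|\le\max\{1,\|u_1\|_{C^{1,1/2}(\mathbb{R}\times[0,\epsilon])}\}=:C_1$, so $q_1\le C_1$; in particular $\partial_x u_1$ is bounded on $\mathbb{R}\times[\epsilon,T]$, and hence on $\mathbb{R}\times(0,T^*)$ after letting $T\uparrow T^*$ — but to feed this back we only need boundedness on the current rectangle. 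Then the second alternative of Lemma \ref{lem2.4} applies to $q_2$ and yields $q_2\le K$. Symmetrically if the roles are exchanged. In every case $q_1,q_2\le \max\{K,C_1,C_2\}$, with all constants independent of $T\in(\epsilon,T^*)$; letting $T\uparrow T^*$ gives $|\partial_x u_i|\le\max\{K,C_1,C_2\}$ on $\mathbb{R}\times[\epsilon,T^*)$, and together with the $C^{1,\frac12}$ bound on $\mathbb{R}\times[0,\epsilon]$ this proves $\partial_x u_i$ is bounded on $\mathbb{R}\times(0,T^*)$.

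The main obstacle — and the only subtlety — is the logical interdependence of the two alternatives in Lemma \ref{lem2.4}: the second alternative for index $i$ presupposes a global bound on $\partial_x u_j$, so one must be careful to establish such a bound for at least one index \emph{before} invoking it for the other. The decay Lemma \ref{lem2.7} resolves this cleanly: it forces any maximum of $|\partial_x u_i|$ over $\mathbb{R}\times[\epsilon,T]$ to lie on a fixed compact set, so on a large enough rectangle the maximum is either interior (first alternative, no prior bound needed) or sits on the boundary where it is controlled by the already-known $C^{1,\frac12}$ regularity up to time $\epsilon$; either way one gets a starting bound, after which the second alternative closes the argument for the remaining index. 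One should also check, as a minor point, that the constants $C_1,C_2$ coming from the boundary are independent of $T$, which is clear since they depend only on the fixed slice $[0,\epsilon]$.
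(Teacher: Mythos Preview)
Your approach is correct and uses the same two ingredients (Lemmas \ref{lem2.4} and \ref{lem2.7}), but the logical structure differs from the paper's. You argue directly: for each $T<T^*$, Lemma \ref{lem2.7} traps the maximum of $|\partial_x u_i|$ on $\R\times[\epsilon,T]$ inside a fixed compact rectangle, and then a four-case analysis (both maxima interior / one on the parabolic boundary / both on the boundary) together with Lemma \ref{lem2.4} yields a bound independent of $T$. The paper instead argues by contradiction via a ``first-time'' device: it fixes a threshold $\overline K$ strictly larger than the constant of Lemma \ref{lem2.4} and than the known $C^{1,1/2}$ bound on an initial layer $[0,T_1]$, lets $\overline t$ be the first time $|\partial_x u_1|$ reaches $\overline K$, and observes that on a suitable rectangle the maximum is then automatically attained at an interior point (it cannot sit on the bottom because $\overline t>T_1$, nor at spatial infinity by Lemma \ref{lem2.7}), so Lemma \ref{lem2.4} gives $\overline K\le K$, a contradiction. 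The first-time trick buys the paper a cleaner invocation of Lemma \ref{lem2.4} (the interior hypothesis is forced), whereas your direct route avoids the contradiction setup but must handle the boundary cases explicitly.

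Two small points to tighten. First, your use of the second alternative of Lemma \ref{lem2.4} is slightly stronger than what is literally stated there: the lemma asks for $\partial_x u_j$ bounded on all of $\R\times(0,T^*)$, while you feed in only a bound on the current rectangle. This is in fact justified --- the proof of Lemma \ref{lem2.4} only uses the quantity $p_j$, the maximum of $|(v_j)_x|$ on the rectangle, via the coupled inequalities \eqref{eq2.18} --- but you should say so explicitly. Relatedly, the clause ``and hence on $\R\times(0,T^*)$ after letting $T\uparrow T^*$'' is circular as written (the boundary case need not persist as $T$ grows) and should simply be deleted; your next clause already supplies the correct fix. Second, the claim that by enlarging $N$ and shrinking $\epsilon$ one can ``arrange that $q_i$ is interior for at least one index'' is neither needed nor obviously true (both maxima could sit on the bottom $t=\epsilon$ for every choice); drop it, since your case analysis already covers all possibilities.
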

\begin{proof}
Let $T_1\in(0,T^*)$ fixed.  We take a constant $\overline{K}>0$ strictly greater than
the constant obtained in Lemma \ref{lem2.4} and $sup_{\R\times[0,T_1]}|\partial_xu_i|$.
Initially, we assume that both $(u_1)_x$ and $(u_2)_x$ are unbounded in $\R\times(0,T^*)$.
Let $T\in(0,T^*)$ such that $|(u_1)_x|>\overline{K}$ at some point of $\R\times(0,T]$.
Now consider $\overline{t}=\inf\{t; |\partial_x u_1(x,t)|>\overline{K}$ for some  $x\in\R\}$ and
let $(x_n,t_n)_{n\in\mathbb{N}}$ be a sequence such that $|\partial_xu_1(x_n,t_n)|>\overline{K}$,
for all $n\in\mathbb{N}$, and $t_n\searrow \overline{t}$. It is clear that $\overline{t}>T_1$. 
Let $\epsilon\in (0,T^*-T_1)$. From Lemma \ref{lem2.7} there is an interval $[a,b]$ such that $|\partial_xu_1(x,t)|\leq \overline{K}$ for all $(x,t)\in [a,b]^c\times[\overline{t},\overline{t}+\epsilon]$, so, $x_n\in[a,b]$ for all $n$ sufficiently large.
Therefore, there are $\overline{x}\in[a,b]$ and a subsequence of $(x_n)$ such that $x_n\rightarrow \overline{x}$. Then, by the continuity of $|\partial_xu_1|$ in $\R\times(0,T]$,
$|\partial_xu_1(\overline{x},\overline{t})|= \overline{K}$ is the maximum of $|\partial_xu_1|$ in
$\R\times(0,\overline{t}]$.
Similarly, assuming that $\partial_xu_2$ is unbounded, we obtain a point
$(\overline{\overline{x}},\overline{\overline{t}})$ such that
$|\partial_xu_2(\overline{\overline{x}},\overline{\overline{t}})|=\overline{K}$ is the maximum
of $|\partial_xu_2|$ in $\R\times(0,\overline{\overline{t}}]$. It is clear that
$\overline{\overline{t}}> T_1$ and, without loss of generality, we can assume
$\overline{t}\leq\overline{\overline{t}}$ (otherwise, we exchange $u_1$ by $u_2$).
If $\overline{t}=\overline{\overline{t}}$ then taking an interval $(A,B)$ containing the points
$\overline{x}$  and $\overline{\overline{x}}$, we obtain a contradiction to Lemma \ref{lem2.4},
because both  maximum points of $|\partial_xu_1|$ and $|\partial_xu_2|$  occur in
$(A,B)\times(\frac{\overline{t}}{2},\overline{t}]$ and both are bounded by the constant given by
Lemma \ref{lem2.4}.
If $\overline{t}<\overline{\overline{t}}$ Lemma \ref{lem2.7} assures the existence of an interval
$(A,B)$ for which $|\partial_xu_1|<\overline{K}$ in
$[A,B]^c\times[\overline{t},\overline{\overline{t}}]$. Thus, the maximum points of $|\partial_xu_1|$
and $|\partial_xu_2|$ in $[A,B]\times[\frac{\overline{t}}{2},\overline{\overline{t}}]$ both
occur in $(A,B)\times(\frac{\overline{t}}{2},\overline{\overline{t}}]$ and they are not bounded by
the constant given by Lemma \ref{lem2.4}. 

Let us now assume that $\partial_xu_1$ is unbounded and $\partial_xu_2$ is bounded, in 
$\R\times(0,T^*)$. Repeating the initial argument for $\partial_xu_1$, we get that the maximum point
$(\overline{x},\overline{t})$ of $|\partial_xu_1|$ in $\R\times(0,\overline{t}]$ occurs
in $(A,B)\times(\frac{\overline{t}}{2},\overline{t}]$ and the maximum
$|\partial_xu_1(\overline{x},\overline{t})|=\overline{K}$ is greater than the constant
given by Lemma \ref{lem2.4}. Adding to this the fact that $\partial_xu_2$ is bounded in $\R\times(0,T^*)$, we again arrive at a contradiction with Lemma \ref{lem2.4}.
\  \ \end{proof}

\medskip

\begin{lemma}
\label{lem2.6}
The function $y_i=y_{i,0}(x)\mbox{e}^{-A_i \int_0^tf(u_i)ds}$ and the coefficients 
$\alpha_i(y_i), \beta_i(y_i)$ belong to $C^{1,\frac{1}{2}}(\R\times[0,T^*))$.
\end{lemma}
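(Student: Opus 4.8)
The plan is to show that $y_i$ and the coefficients $\alpha_i(y_i),\beta_i(y_i)$ lie in $C^{1,\frac12}(\R\times[0,T^*))$, i.e. are bounded, Lipschitz in $x$ and $\frac12$-H\"older in $t$ on each strip $\R\times[0,T]$ with $T<T^*$. The boundedness is immediate: $0\le y_i\le\|y_{i,0}\|_\infty$ from the explicit formula, and $\alpha_i,\beta_i$ are then bounded since $a_i+b_iy_i\ge a_i>0$. So the content is the H\"older/Lipschitz regularity of $y_i$, and then $\alpha_i(y_i),\beta_i(y_i)$ follow because $s\mapsto \lambda_i/(a_i+b_is)$ and $s\mapsto c_i/(a_i+b_is)$ are Lipschitz on $[0,\|y_{i,0}\|_\infty]$ (the argument already carried out inside the proof of Lemma~\ref{lem2.1}, namely $\|\alpha_i(y_i)\|_{1,1/2}\le \frac{\lambda_i}{a_i}+\frac{b_i\lambda_i}{a_i^2}\|y_i\|_{1,1/2}$, verbatim).

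First I would write $y_i=y_{i,0}(x)g_i(x,t)$ with $g_i(x,t)=\exp\bigl(-A_i\int_0^t f(u_i(x,s))\,ds\bigr)$ and estimate $\|y_i\|_{1,1/2}\le \|y_{i,0}\|_1\,\|g_i\|_{1,1/2}$ exactly as in \eqref{eq2.7}, using that $y_{i,0}$ is Lipschitz and bounded (here I need the extra hypothesis of Theorem~\ref{global} that $(y_{i,0})'$ is bounded; the $C^2$ assumption is not needed for this lemma). For $\|g_i\|_{1,1/2}$ the time H\"older part is the easy bound $|g_i(x,t)-g_i(x,\bar t)|\le A_i\|f\|_\infty\,|t-\bar t|\le A_i\,|t-\bar t|$ (note $0\le \tilde f\le 1$), and the spatial Lipschitz part reduces, via $|e^{-P}-e^{-Q}|\le|P-Q|$, to $A_i\int_0^t \frac{|f(u_i(x,s))-f(u_i(\bar x,s))|}{|x-\bar x|}\,ds\le A_i T\,\|f'\|_\infty\,\|u_i\|_{C^{1,1/2}(\R\times[0,T])}$. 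The crucial input is therefore that $u_i$ is Lipschitz in $x$ uniformly on $\R\times[0,T]$ for every $T<T^*$: this is exactly Corollary~\ref{uix is bounded}, which gives $\partial_x u_i$ bounded on $\R\times(0,T^*)$, together with $u_i\in C^{1,1/2}_{\mathrm{loc}}(\R\times[0,T^*))$ from the definition of $X_{T^*}$. Combining these yields $\|g_i\|_{1,1/2}\le 1+A_i T\|f'\|_\infty\sup_{\R\times(0,T)}|\partial_x u_i|+A_i T^{1/2}<\infty$, hence $\|y_i\|_{C^{1,1/2}(\R\times[0,T])}<\infty$ for each $T<T^*$, which is the claim for $y_i$.

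Finally, for $\alpha_i(y_i)$ and $\beta_i(y_i)$ I would just invoke that $\phi(s):=\lambda_i/(a_i+b_is)$ satisfies $|\phi(s)-\phi(s')|\le \frac{b_i\lambda_i}{a_i^2}|s-s'|$ for $s,s'\ge 0$, so $\|\alpha_i(y_i)\|_{C^{1,1/2}(\R\times[0,T])}\le \frac{\lambda_i}{a_i}+\frac{b_i\lambda_i}{a_i^2}\|y_i\|_{C^{1,1/2}(\R\times[0,T])}<\infty$, and identically for $\beta_i$ with $c_i$ in place of $\lambda_i$; this is the same computation as in Lemma~\ref{lem2.1}. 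The main obstacle is not in this lemma itself — which is a short composition argument — but in its dependence on Corollary~\ref{uix is bounded}; given that corollary, the proof is essentially the estimate \eqref{eq2.7} reused on the strip $\R\times[0,T]$ for arbitrary $T<T^*$.
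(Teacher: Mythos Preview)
Your proposal is correct and follows essentially the same approach as the paper: both arguments use the boundedness of $\partial_x u_i$ from Corollary~\ref{uix is bounded} (together with the hypothesis that $y_{i,0}'$ is bounded) for the spatial Lipschitz estimate, a direct bound $|\int_{t'}^t f(u_i)\,ds|\le |t-t'|$ for the time-H\"older part, and the Lipschitz continuity of $s\mapsto 1/(a_i+b_is)$ on $[0,\|y_{i,0}\|_\infty]$ for the coefficients. The only cosmetic difference is that the paper computes $\partial_x y_i$ explicitly and bounds it pointwise, whereas you factor $y_i=y_{i,0}\,g_i$ and reuse the product estimate \eqref{eq2.7}; these are equivalent.
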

\begin{proof} It is clear that $|y_i(x,t)|\leq \|y_{i,0}\|_\infty$, for all $\R\times(0,T^*)$.
Besides,
$$
\partial_xy_i(x,t)=(y_{i,0}'(x)-A_iy_{i,0}(x)\int_0^tf'(u_i(x,s))
\partial_xu_i(x,s)ds)\mbox{e}^{-A_i\int_0^tf(u_i(x,s))ds}
$$ 
so, since  $y_{i,0}'$ is bounded, by hypothesis, and we have Corollary \ref{uix is bounded}, it follows that $(y_i)_x$ is bounded in $\R\times(0,T^*)$.
Moreover,
\begin{align*}
|y_i(x,t)-y_i(x,t')|
&\leq y_{i,0}(x)|\mbox{e}^{-A_i\int_0^tf(u(x,s))ds}-\mbox{e}^{-A_i\int_0^{t'}f(u_i(x,s))ds}|\\ 
&\leq K|\int_0^tf(u_i(x,s))ds-\int_0^{t'}f(u_i(x,s))ds|
\leq K|\int_{t'}^tf(u_i(x,s))ds|\\ 
&\leq K(t-t')\leq K(t-t')^\frac{1}{2}
\end{align*}
for all $(x,t), (x,t')\in \R\times(0,T^*)$, with $|t-t'|\le 1$, for some constant $K$. 
Finally, as the composition of a H\"older continuous function with
a function having a bounded derivative is also a H\"older continuous function, the result follows 
by using \eqref{functions}. \ \ \end{proof}

\subsection{Proof of Theorem \ref{global}}
\label{global proof}

Due to Lemma \eqref{lem2.6}, we can consider the parabolic equation 
$\partial_t-\alpha_i(y_i)\partial_{xx}+\beta_i(y_i)\partial_x=0$ in the domain $\R\times[0,T^*]$.
Let us denote its fundamental solution by $\Gamma$. By Theorem \ref{teo0.1} we can write
\begin{equation}
\label{eq2.25}
 u_i(x,t)=\int_\R\Gamma(x,t,\xi,0)u_{i,0}(\xi)d\xi+\int_0^t\int_\R
\Gamma(x,t,\xi,\tau)f_i(y_i,u_1,u_2)(\xi,\tau)d\xi d\tau 
\end{equation}
for all $(x,t)\in\R\times[0,T^*)$. Now, since $u_{i,0}$ is bounded, by hypothesis,
$u_i$ is bounded (recall that $u=(u_1,u_2)$ is in the sector $\langle 0,\varphi\rangle_{T^*}$), and
we have the estimate $\Gamma\le K(t-\tau)^{-1/2}e^{-C\frac{(x-\xi)^2}{t-\tau}}$ (see \eqref{eq0.10}),
for $x,\xi\in\R$ and $t,\tau\in [0,T^*]$, $\tau<t$, it follows that the functions 
$\Gamma(x,t,\xi,0)u_{i,0}(\xi), \Gamma(x,t,\xi,\tau)f_i(y_i,u_1,u_2)(\xi,\tau)$ are integrable with respect to $\xi$ in $\R$ when $t=T^*$, and $\int_\R\Gamma(x,T^*,\xi,\tau)f_i(y_i,u_1,u_2)(\xi,\tau)d\xi$ is integrable with respect to $\tau$ in $[0,T^*]$. Thus the right hand side of \eqref{eq2.25} is well defined for $t=T^*$ and we set $u_i(x,T^*)$ as being this value.   

Next, with this definition, we show that $u_i(x, t)$ converges to $u_i(x, T^*)$ when 
$t\nearrow T^*$ uniformly with respect to $x\in\R$. In fact, we have 
$\|u_i(\cdot,T^*)-u_i(\cdot,t)\|_{L^\infty(\R)}\le K(T^*-t)^{1/2}$, for some constant $K$.
Indeed,
\begin{align*}
 &u_i(x,T^*)-u_i(x,t)\\
=&\int_\R(\Gamma(x,\xi,T^*,0)-\Gamma(x,t,\xi,0))u_{i,0}(\xi)d\xi\\
&+\int_t^{T^*}\int_\R\Gamma(x,\xi,T^*,\tau)f_i(y_i,u_1,u_2)(\xi,\tau)d\xi d\tau\\
&+\int_0^{t}\int_\R(\Gamma(x,\xi,T^*,\tau)-
\Gamma(x,t,\xi,\tau))f_i(y_i,u_1,u_2)(\xi,\tau)d\xi d\tau\\
\equiv& I_1+I_2+I_3.
\end{align*}
If $\frac{T^*}{2}\leq t\leq T^*$, we have
\begin{align*}
|I_1|&=|\int(\Gamma_{[v_i(u_i)]}(x,\xi,T^*,0)-\Gamma_{[v_i(u_i)]}(x,t,\xi,0))u_{i,0}(\xi)d\xi|\\
&=|\int\partial_t\Gamma_{[v_i(u_i)]}(x,\xi,s,0)(T^*-t)u_{i,0}(\xi)d\xi|\\
&\leq\int \frac{K}{s^{\frac{3}{2}}}\mbox{e}^{-C\frac{(x-\xi)^2}{s}}|T^*-t|\|u_{i,0}\|_\infty d\xi=
\frac{K}{s}|T^*-t|\leq \frac{2K}{T^*}|T^*-t|
\end{align*}
\begin{align*}
|I_2|&= |\int_t^{T^*}\int\Gamma_{[v_i(u_i)]}(x,\xi,T^*,\tau)f_i(y_i(u_i),u_1,u_2)(\xi,\tau)d\xi d\tau|\\
&\leq\int_t^{T^*}\int\frac{K}{(T^*-\tau)^\frac{1}{2}}\mbox{e}^{-C\frac{(x-\xi)^2}{T^*-t}}\|f_i\|_\infty d\xi d\tau\\
&\leq\int_t^{T^*}K\|f_i\|_\infty d\tau=K\|f_i\|_\infty(T^*-t)
\end{align*}
\begin{align*}
|I_3|&=|\int_0^{t}\int\left(\Gamma_{[v_i(u_i)]}(x,\xi,T^*,\tau)-
\Gamma_{[v_i(u_i)]}(x,t,\xi,\tau)\right)f_i(y_i(u_i),u_1,u_2)(\xi,\tau)d\xi d\tau|\\
&=|\int_0^{t}\int\int_t^{T^*}\partial_t
\Gamma_{[v_i(u_i)]}(x,\xi,s,\tau)f_i(y_i(u_i),u_1,u_2)(\xi,\tau)dsd\xi d\tau|\\
&=|\int_0^{t}\int_t^{T^*}\int\partial_t
\Gamma_{[v_i(u_i)]}(x,\xi,s,\tau)f_i(y_i(u_i),u_1,u_2)(\xi,\tau)d\xi ds d\tau|\\
&=|\int_0^{t}\int_t^{T^*}\int\partial_t
\Gamma_{[v_i(u_i)]}(x,\xi,s,\tau)(f_i(y_i(u_i),u_1,u_2)(\xi,\tau)-f_i(y_i(u_i),u_1,u_2)(x,\tau))d\xi dsd\tau|\\
&\leq\int_0^{t}\int_t^{T^*}\int\frac{K}{(s-\tau)^\frac{3}{2}}\mbox{e}^{-C\frac{(x-\xi)^2}{s-\tau}}\|\nabla f_i\|_\infty(\|(u_1)_x\|_\infty+\|(u_2)_x\|_\infty+\|(y_i)_x\|_\infty)|x-\xi|d\xi dsd\tau\\
&=\int_0^{t}\int_t^{T^*}\int\frac{K}{s-\tau}\mbox{e}^{-C^*\frac{(x-\xi)^2}{s-\tau}}d\xi dsd\tau\leq
K\int_0^{t}\int_t^{T^*}\frac{1}{(s-\tau)^\frac{1}{2}}ds d\tau
\end{align*}
\begin{align*}
&=K\int_0^t[(T^*-\tau)^\frac{1}{2}-(t-\tau)^\frac{1}{2}]d\tau\leq K\int_0^t(T^*-t)^\frac{1}{2}d\tau\\
&\leq KT^*(T^*-t)^\frac{1}{2}.
\end{align*}

From the above convergence, we conclude that $u_i(\cdot,T^*)$ is bounded and nonnegative (since
$u=(u_1,u_2)\in\langle 0,\varphi\rangle_{T^*}$) and it is Lipschitz continuous as well, by Corollary 
\ref{uix is bounded}. To end the proof of Theorem \ref{global}, it remains to prove
that $u_i(\cdot,T^*)\in L^p$. Using again the {\lq\lq}generalized Young's inequality{\rq\rq} \cite[p. 9]{Fol1} and the Minkowski's ineguality for integrals \cite[p. 194]{Fol}) (see the proof of the $L^p$ assertion (the last assertion) in Theorem \ref{local}), we obtain
\begin{align*}
 &\|u_i(.,t)\|_{L^p}\\
\nonumber
 \leq&\|\int\Gamma_{[v_i(u_i)]}(.,t,\xi,0)u_{i,0}(\xi)d\xi\|_{L^p}\\
&+\|\int_0^t\int\Gamma_{[v_i(u_i)]}(x,t,\xi,\tau)f_i(y_i(u_i),u_1,u_2)(\xi,\tau)d\xi d\tau\|_{L^p}\\
\nonumber
\leq& C_1+\int_0^t\|\int\Gamma_{[v_i(u_i)]}(x,t,\xi,\tau)f_i(y_i(u_i),u_1,u_2)(\xi,\tau)d\xi\|_{L^p}d\tau\\
\nonumber
\leq& C_1+\int_0^t\|\int\Gamma_{[v_i(u_i)]}(x,t,\xi,\tau)(f_i(y_i(u_i),u_1,u_2)-
f_i(y_i(u_i),0,0))(\xi,\tau)d\xi\|_{L^p}d\tau \\
\nonumber
\leq& C_1+\int_0^t\|\int\Gamma_{[v_i(u_i)]}(x,t,\xi,\tau)
\nabla_u F_i(u_1(\xi,\tau),u_2(\xi,\tau))d\xi\|_{L^p}d\tau\\
\nonumber
\leq& C_1+C_2\int_0^t(\|u_1(.,\tau)\|_{L^p}+\|u_2(.,\tau)\|_{L^p})d\tau\, .
\end{align*}
Thus, $\|u_1(.,t)\|_{L^p}+\|u_2(.,t)\|_{L^p}\leq C_1+
C_2\int_0^t(\|u_1(.,\tau)\|_{L^p}+\|u_2(.,\tau)\|_{L^p})d\tau$.
Then, defining $\phi(t)=\|u_1(.,t)\|_{L^p}+\|u_2(.,t)\|_{L^p}$, we have
$\phi(t)\leq C_1+C_2\int_0^t\phi(\tau)d\tau$.
By the Gronwall's inequality for integrals (see \cite[p. 625]{Eva}), it follows that
$\phi(t)\leq C_1(1+C_2t\mbox{e}^{C_2t})$, for all $t\in[0,T^*)$.
Therefore,
$\|u_i(.,t)\|_{L^p}\leq C_1(1+C_2T^*\mbox{e}^{C_2T^*})$, for all $t\in[0,T^*)$, and from
the Fatou's lemma, we have $u_i(x,T^*)\in L^p$.
\ \ 
\vbox{\hrule height0.6pt\hbox{%
\vrule height1.3ex width0.6pt\hskip0.8ex
\vrule width0.6pt}\hrule height0.6pt}

\end{document}